\DeclareMathAlphabet{\mathpzc}{OT1}{pzc}{m}{it}
\newtheorem{theorem}{Theorem}[section]
\newtheorem*{claim*}{Claim}
\newtheorem{lemma}[theorem]{Lemma}
\newtheorem{lem}[theorem]{Lemma}
\newtheorem{cor}[theorem]{Corollary}
\newtheorem{prop}[theorem]{Proposition}
\newtheorem{Thm}[theorem]{Theorem}
\newtheorem{thm}[theorem]{Theorem}
\theoremstyle{definition}
\newtheorem{Def}[theorem]{Definition}
\theoremstyle{remark}
\newtheorem{Rmk}[theorem]{Remark}
\numberwithin{equation}{section}
\newcommand{\norm}[1]{\lVert#1\rVert}
\newcommand{\op}{\operatorname}
\newcommand{\Om}{\Omega}
\newcommand{\bb}{\mathbb}
\newcommand{\be}{\begin{equation}}
\newcommand{\ee}{\end{equation}}
\newcommand{\Ga}{\Gamma}
\newcommand{\ga}{\gamma}
\newcommand{\La}{\Lambda}
\newcommand{\inte}{\op{int}}
\newcommand{\ba}{\backslash}
\newcommand{\cal}{\mathcal}
\newcommand{\br}{\mathbb R}
\newcommand{\F}{\cal F}
\newcommand{\G}{\Gamma}
\newcommand{\m}{\mathsf{m}}
\newcommand{\T}{\op{T}}
\renewcommand{\frak}{\mathfrak}
\renewcommand{\u}{\mathsf u}
\renewcommand{\v}{\mathsf v}
\newcommand{\e}{\varepsilon}
\newcommand{\BR}{\op{BR}}
\newcommand{\BMS}{\op{BMS}}
\renewcommand{\L}{\mathcal L}
\newcommand{\fa}{\mathfrak a}
\newcommand{\pc}{P^{\circ}}
\renewcommand{\i}{\op{i}}
\newcommand{\M}{\mathsf M}
\begin{document}

\title[The Hopf-Tsuji-Sullivan dichotomy]{The Hopf-Tsuji-Sullivan dichotomy in higher rank and applications to Anosov subgroups}

\author{Marc Burger}
\address{Department of Mathematics, ETH, Zurich, Switzerland}
\email{burger@ethz.ch}
\author{Or Landesberg}
\address{Department of Mathematics, Yale University, New Haven, CT 06520}
\email{or.landesberg@yale.edu}

\author{Minju Lee}
\address{Mathematics department, Yale university, New Haven, CT 06520, and Department of Mathematics, University of Chicago, Chicago, IL 60637 (current address)}
\email{minju1@uchicago.edu}

\author{Hee Oh}
\address{Department of Mathematics, Yale University, New Haven, CT 06520}
\email{hee.oh@yale.edu}
\thanks{Oh is partially supported by the NSF}

\maketitle
\begin{abstract}
We establish an extension of the Hopf-Tsuji-Sullivan dichotomy to
any Zariski dense discrete subgroup of a semisimple real algebraic group $G$.
We then apply this dichotomy to Anosov subgroups  of $G$, which surprisingly presents a different phenomenon depending on the rank of  the ambient group $G$.
\end{abstract} 
\tableofcontents
\section{Introduction}
Let $G$ be a connected simple real algebraic group of rank one,  $(X,d)$ the associated Riemannian symmetric space and
  $\partial X$ the geometric boundary of $X$.
  We fix a base point $o\in X$, and $\pi:\T^1(X)\to X$ denotes the canonical projection of a vector to its basepoint.
  The Hopf parametrization of the unit tangent bundle $\T^1(X)$ maps a vector $v\in \T^1(X)$
  to $$(v^+, v^-, \beta_{v^+}(o, \pi(v)))$$
  where $v^+,v^-\in \partial X$ are respectively the forward and backward endpoints of the geodesic determined by $v$ and  for $\xi\in \partial X$ and $x,y\in X$, $\beta_\xi (x,y)$ denotes the Busemann function given by $$\beta_\xi(x,y)=\lim_{z\to \xi} d(y, z)-d(x,z).$$
  This gives a homeomorphism
$$\T^1(X)\simeq (\partial X \times \partial X-\Delta (\partial X))\times \br$$  where
$\Delta (\partial X)$ denotes the diagonal embedding of $\partial X$ into $\partial X\times \partial X$ and the geodesic flow $\mathcal G^t$ on $\T^1(X)$ corresponds to the translation flow on $\br$.

 \subsection*{The Hopf-Tsuji-Sullivan dichotomy in rank one} Let $\G<G$ be a non-elementary discrete subgroup. A Borel probability measure $\nu$ on $\partial X$ is called a $\Ga$-conformal measure of dimension $\delta\ge 0$ if for any $\ga\in \Ga$ and $\xi\in \partial X$,
  $$\frac{d\gamma_*\nu}{d\nu}(\xi)= e^{-\delta \beta_\xi(o,\ga (o))}$$
where  $\ga_* \nu(Q)=\nu(\gamma^{-1} Q)$ for any Borel subset $Q\subset \partial X$.

  Each $\Gamma$-conformal  measure $\nu$ on $\partial X$ determines a unique
 geodesic flow invariant Borel measure $m_\nu$ on $\T^1(\Gamma\ba X)$, which is locally equivalent to $\nu\otimes \nu \otimes ds$ in the Hopf coordinates, where $ds$ denotes the Lebesgue measure on $\br$. 
The following criterion known as the {\it Hopf-Tsuji-Sullivan dichotomy} 
 relates  dynamical properties of the geodesic flow $\cal G^t$ with respect to the measure $m_\nu$, the $\nu$-size of the conical\footnote{A point $\xi\in \partial X$ is called a conical limit point of $\Ga$ if a geodesic ray toward $\xi$ accumulates in $\Gamma\ba X$.} limit points of $\G$ and the divergence property of the Poincare series 
 $\cal P(s)=\sum_{\ga\in \Ga} e^{-s d(\ga o, o)}$ at the dimension of $\nu$: we denote by $\La_\mathsf{con}\subset \partial X$ the set of all conical limit points of $\Ga$.
\begin{Thm}\label{main} Let $G$ be a connected simple real algebraic group of rank one and $\G<G$ a non-elementary discrete subgroup.
Let $\nu$ be a $\Ga$-conformal measure on $\partial X$ of dimension $\delta$. The following are equivalent: \begin{enumerate}
\item $\nu(\Lambda_\mathsf{con})>0$ (resp.  $\nu(\Lambda_{\mathsf{con}})=0$);
\item  $\nu(\Lambda_\mathsf{con})=1$ (resp.  $\nu(\Lambda_\mathsf{con})=0$); 
\item the geodesic flow $\cal G^t$ is conservative (resp. completely dissipative) with respect to $m_\nu$;
\item the geodesic flow $\cal G^t$ is ergodic (resp. non-ergodic) with respect to $m_\nu$;
    \item  $\sum _{\ga\in \Gamma} e^{-\delta  d(o, \gamma o)}=\infty $ (resp. 
    $\sum _{\ga\in \Gamma} e^{-\delta  d(o, \gamma o)}<\infty )$ where $\delta$ is the conformal dimension of $\nu$ and $o\in X$ is any point.
     \end{enumerate}
\end{Thm}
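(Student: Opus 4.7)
The plan is to establish the cycle $(1)\Rightarrow(5)\Rightarrow(3)\Rightarrow(4)\Rightarrow(2)\Rightarrow(1)$, the last implication being trivial; once all five are mutually equivalent, their negations — which are precisely the parenthetical alternatives — are automatically equivalent as well, yielding the full dichotomy. The single tool threading through the whole argument is Sullivan's shadow lemma: for a sufficiently large radius $r$, the $\nu$-measure of the shadow $\cal O_r(o,\ga o)$ of the ball $B(\ga o,r)$ seen from $o$ is comparable, up to a multiplicative constant depending only on $r$, to $e^{-\delta d(o,\ga o)}$. This converts the Poincar\'e series into a sum of $\nu$-measures of shadows and recasts $\xi\in\La_\mathsf{con}$ as the membership of $\xi$ in infinitely many such shadows.

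For $(1)\Rightarrow(5)$ I would contrapose: assuming $\sum e^{-\delta d(o,\ga o)}<\infty$, the first Borel--Cantelli lemma (no independence needed) together with the shadow lemma gives $\nu(\La_\mathsf{con})=0$. For $(5)\Rightarrow(3)$ I would apply Halmos' recurrence criterion in Hopf coordinates: since $m_\nu$ is locally $\nu\otimes\nu\otimes ds$ and $\cal G^t$ acts by translation on the $\br$ factor, conservativity reduces to $\G$-recurrence of pairs of boundary points, which via the shadow estimate is exactly divergence of the Poincar\'e series.

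For $(3)\Rightarrow(4)$ I would invoke the Hopf argument. Because $m_\nu$ may be infinite, one replaces Birkhoff with Hopf's ratio ergodic theorem: for compactly supported $f,g\in L^1(m_\nu)$ with $\int g\,dm_\nu>0$, the ratio of $\cal G^t$-Birkhoff integrals of $f$ and $g$ converges $m_\nu$-a.e.\ to a measurable function constant along the strong stable and strong unstable foliations, which in Hopf coordinates are the leaves $\{v^+=\text{const}\}$ and $\{v^-=\text{const}\}$. Combining this with constancy along the flow direction and the product structure of $\nu\otimes\nu$ forces the limit to be essentially constant, so $\cal G^t$ is ergodic. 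Finally $(4)\Rightarrow(2)$ follows because ergodicity of $\cal G^t$ transfers through the Hopf coordinates to $\G$-ergodicity on $(\partial X,\nu)$, and $\La_\mathsf{con}$ is $\G$-invariant; under the standing hypothesis $\nu(\La_\mathsf{con})>0$ this forces $\nu(\La_\mathsf{con})=1$.

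The main obstacle will be the implication $(3)\Rightarrow(4)$. The Hopf argument must be carried out in the infinite-measure setting, which requires the ratio ergodic theorem and a careful propagation of constancy of the Birkhoff limit along both transverse foliations to a constant on the support of $m_\nu$; this relies on local product coordinates and on the non-atomicity of $\nu$ on the limit set (for a non-elementary $\Ga$). The remaining implications are essentially translations between exponential sums over $\G$ and $\nu$-measures of shadows, all powered by Sullivan's lemma.
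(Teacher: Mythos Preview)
The paper does not prove Theorem~\ref{main} directly---it is quoted as the classical rank-one result with references to Sullivan, Nicholls, and Roblin---but the paper's proof of the higher-rank Theorem~\ref{dio} specializes to rank one, so I compare against that.

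Your cycle has a genuine gap at $(5)\Rightarrow(3)$, and this is precisely the step you have underestimated. You write that conservativity ``reduces to $\Ga$-recurrence of pairs of boundary points, which via the shadow estimate is exactly divergence of the Poincar\'e series.'' The shadow lemma only gives $\nu(O_r(o,\ga o))\asymp e^{-\delta d(o,\ga o)}$; it does \emph{not} by itself convert divergence of $\sum e^{-\delta d(o,\ga o)}$ into $\nu(\La_\mathsf{con})>0$ or into conservativity. Halmos' recurrence theorem gives no such criterion. That implication---due to Sullivan---is the deepest part of the whole theorem: one needs a second Borel--Cantelli argument (see Lemma~\ref{lem.BC}, taken from Aaronson--Sullivan), which in turn rests on a quasi-independence estimate
\[
\int_0^T\!\!\int_0^T \M(P_t\cap P_s)\,ds\,dt \;\ll\; \Bigl(\int_0^T \M(P_t)\,dt\Bigr)^{2}.
\]
In the paper this occupies most of Section~\ref{sec:DP} (Proposition~\ref{lem.EE}, Corollary~\ref{cobal}, Proposition~\ref{bal}). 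Your identification of $(3)\Rightarrow(4)$ as ``the main obstacle'' is therefore misplaced: the Hopf argument, while requiring care in infinite measure, is comparatively routine (Proposition~\ref{mini}).

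There is also a circularity in your $(4)\Rightarrow(2)$: you invoke ``the standing hypothesis $\nu(\La_\mathsf{con})>0$,'' which is condition~(1), in order to close the cycle back to~(1). Ergodicity of the flow only yields $\nu(\La_\mathsf{con})\in\{0,1\}$ via $\Ga$-ergodicity on the boundary; ruling out~$0$ requires an \emph{independent} argument that ergodic implies conservative. The paper does this at the end of Proposition~\ref{mini}: a completely dissipative ergodic flow is isomorphic to translation on $\br$, forcing $\nu\otimes\nu$ to be supported on a single $\Ga$-orbit and hence $\nu$ to be atomic, contradicting $\Ga$ being non-elementary. Without this step your cycle does not close.
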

  
Most equivalences are due to Sullivan for real hyperbolic spaces  \cite{Su} (see also \cite{Ts}, \cite{AS}) and to Burger-Mozes for proper CAT (-1) spaces \cite[Sec. 6.3]{BM}
and its complete form can be found in Nicholl's book
\cite[Ch. 8]{Ni} when $X$ is a real hyperbolic space and in Roblin's thesis \cite[Thm. 1.7]{Rob} for a proper CAT (-1) spaces.

We denote by $\La\subset \partial X$ the limit set of $\Ga$, which is the unique $\Ga$-minimal subset of $\partial X$ and by $\delta_\Ga$ the critical exponent of $\Ga$, that is, the abscissa of the convergence of the Poincare series $\cal P(s)$ of $\Ga$. The group $\G$ is called a divergent type if $\cal P(\delta_\Ga)=\infty$.
Patterson and Sullivan constructed a $\Ga$-conformal measure,
say, $\nu_{\op{PS}}$, supported on the limit set $\La$ of dimension $\delta_\Ga$, called the Patterson-Sullivan measure. Theorem \ref{main} implies that whether $\G$ is of divergent type or not is completely determined by the positivity of $\nu_{\op{PS}}(\La_\mathsf{con})$, and vice versa.

\subsection*{The case of convex cocompact groups}
A discrete group with $\La=\La_\mathsf{con}$ is called a {\it convex cocompact} subgroup. They are also characterized by the property that $\G$ acts cocompactly on the convex hull of $\La$ in $X$.
For a convex cocompact subgroup $\Gamma$, there exists a unique $\Gamma$-conformal measure supported
on the limit set $\Lambda$, namely the Patterson-Sullivan measure $\nu_{\op{PS}}$. The associated geodesic flow invariant measure on $\T^1(\Ga\ba X)$, called the Bowen-Margulis-Sullivan measure, is known to be the measure of maximal entropy \cite{Su}. 
An immediate consequence of Theorem \ref{main} for convex cocompact groups is as follows:

\begin{thm} Let $\G<G$ be a convex cocompact subgroup. Then
\begin{enumerate}
    \item the geodesic flow $\cal G^t$ on $\T^1(\Ga\ba X)$
    is
   conservative and ergodic with respect to the Bowen-Margulis-Sullivan measure $m^{\BMS}$;
   \item $\Ga$ is of divergent type, i.e., $\sum _{\ga\in \Gamma} e^{-\delta_\Ga  d(o, \gamma o)}=\infty $.
\end{enumerate}

\end{thm}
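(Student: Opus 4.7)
The plan is to deduce this theorem as an immediate corollary of Theorem \ref{main} applied to the Patterson-Sullivan measure $\nu_{\op{PS}}$, exploiting the defining property of convex cocompact groups that $\La = \La_{\mathsf{con}}$.

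First, I would recall what was already established in the discussion preceding the statement: for the convex cocompact group $\G$, the Patterson-Sullivan measure $\nu_{\op{PS}}$ exists as a $\G$-conformal measure of dimension $\delta_\Ga$ supported on $\La$, and it is the unique such measure. The associated flow-invariant measure $m_{\nu_{\op{PS}}}$ on $\T^1(\G\ba X)$ is, by definition, the Bowen-Margulis-Sullivan measure $m^{\BMS}$.

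Next, I would observe that because $\nu_{\op{PS}}$ is a Borel probability measure supported on $\La$, and $\La = \La_{\mathsf{con}}$ by the convex cocompactness hypothesis, we have
\[
\nu_{\op{PS}}(\La_{\mathsf{con}}) = \nu_{\op{PS}}(\La) = 1 > 0.
\]
Thus condition (1) of Theorem \ref{main} is satisfied with $\nu = \nu_{\op{PS}}$. Invoking the equivalences (1) $\Leftrightarrow$ (3) $\Leftrightarrow$ (4) yields that $\cal G^t$ is conservative and ergodic with respect to $m^{\BMS}$, proving part (1) of the theorem. The equivalence (1) $\Leftrightarrow$ (5) then gives $\sum_{\ga\in \G} e^{-\delta_\Ga d(o,\ga o)} = \infty$, which is exactly the assertion that $\G$ is of divergent type, proving part (2).

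There is essentially no obstacle here, as the argument is a bookkeeping application of Theorem \ref{main}; the only content one must verify is the tautology $\nu_{\op{PS}}(\La_{\mathsf{con}}) = 1$ for convex cocompact $\G$, which follows directly from the definition $\La = \La_{\mathsf{con}}$ together with the support property of $\nu_{\op{PS}}$. The substantive work has already been done in establishing the dichotomy itself, as well as in the Patterson-Sullivan construction of $\nu_{\op{PS}}$, both of which are cited.
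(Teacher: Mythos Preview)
Your proposal is correct and matches the paper's approach exactly: the paper states this theorem as ``an immediate consequence of Theorem \ref{main} for convex cocompact groups'' without further proof, and your argument spells out precisely this deduction via $\nu_{\op{PS}}(\La_{\mathsf{con}}) = \nu_{\op{PS}}(\La) = 1$ and the equivalences in Theorem \ref{main}.
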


 The unit tangent bundle of $\Gamma\ba X$ is a double quotient space
$\Gamma\ba G/M$ where $M$ is a compact subgroup of $G$ commuting with the one-parameter diagonal subgroup $\{a_t\}$ which induces the geodesic flow.
 When $\Gamma$ is Zariski dense in addition, the lifted Bowen-Margulis-Sullivan measure, considered as an $M$-invariant measure on $\Gamma\ba G$,
  is also ergodic for the diagonal flow $\{a_t\}$ whenever $M$ is connected \cite[Thm. 1.1]{Win}. The only case of $M$ disconnected is when $G\simeq \op{SL}_2(\br)$ and $M=\{\pm e\}$, in which case $m^{\BMS}$ has at most two ergodic components \cite{LO2}.

\subsection*{The Hopf-Tsuji-Sullivan dichotomy in higher rank} The main aim of this article is to extend the Hopf-Tsuji-Sullivan dichotomy for discrete subgroups of higher rank semisimple real algebraic groups $G$, while replacing the geodesic flow of the rank one space with any one-parameter subgroup of diagonal elements of $G$ (Theorem \ref{dio}). 
Each one-parameter subgroup of diagonal elements corresponds to a direction, say, $\mathsf u$, in the positive Weyl chamber of $G$. We introduce
  the {\it $\mathsf u$-directional} conical limit set and  {\it $\mathsf u$-directional} Poincare series, whose properties relative to a given $\G$-conformal density is shown to determine ergodic properties
  of the action of the one-parameter subgroup $\{\exp (t\mathsf u):t\in \br\}$ with respect to an associated measure on $\Gamma\ba G$. We then apply the dichotomy  together with recent local mixing results of Chow and Sarkar \cite{CS} to Anosov subgroups $\Gamma$. We discover a surprising phenomenon that the rank of the ambient group $G$ dictates a completely opposite behavior for $\Ga$ as stated in Theorem \ref{thm.Ano}. We also deduce recurrent properties of the Burger-Roblin measures for each interior direction of the limit cone of $\Gamma$ (Corollary \ref{bb2}), which plays an important role in the recent measure classification result of Landesberg, Lee, Lindenstrauss and Oh \cite[Thm. 1.1] {LLLO}.
 
In order to state these results precisely, we now let $G$ be a connected, semisimple real algebraic group. Let $P$ be a minimal parabolic subgroup of $G$
with a fixed Langlands decomposition $P=MAN$. Here $A$ is a maximal real split torus of $G$, $M$ is a compact subgroup commuting with $A$ and $N$ is a maximal horospherical subgroup. We fix a positive Weyl chamber $\frak a^+\subset\frak a=\op{Lie}(A)$
so that $\log N$ consists of positive root subspaces.
 We fix a maximal compact subgroup $K<G$ so that the Cartan decomposition $G=K(\exp \fa^+) K$ holds, and denote by $\mu:G\to \fa^+$ the Cartan projection, i.e., for $g\in G$, $\mu(g)\in \fa^+$ is the unique element such that $g\in K\exp \mu(g)K$.

Let $\G<G$ be a Zariski dense discrete subgroup of $G$. 
 We denote by $\L_\Ga\subset \fa^+$ the limit cone of $\Gamma$, which is the asymptotic cone of $\mu(\Ga)$. Benoist showed that
 $\L_\Ga$ is a convex cone with non-empty interior \cite{Ben}.
Let $\F$ denote the Furstenberg boundary $G/P$ and $\La\subset \F$ the limit set of $\Ga$, which is the unique $\Gamma$-minimal subset.
For a linear form $\psi\in \fa^*$,  a Borel probability measure $\nu_\psi$ on $\cal F$ is called a $(\Ga,\psi)$-conformal measure
 if for any $\ga\in \G$ and $\xi\in \cal F$,
 \begin{equation}\label{eq.conf}
 \frac{d\ga_* \nu_\psi}{d\nu_\psi}(\xi) =e^{\psi (\beta_\xi (e, \gamma))}
 \end{equation}
  where $\beta$ denotes the $\frak a$-valued Busemann function (see Def. \ref{Bu}). Quint showed in \cite[Thm. 8.1]{Quint2} that a $(\Ga,\psi)$-conformal measure may exist only when $\psi\ge \psi_\Ga$ where  $\psi_\Ga:\fa\to \br$ denotes the growth indicator function of $\Ga$ (Def. \ref{def.GI}). Moreover, he constructed a $(\Gamma,\psi)$-conformal measure supported on $\La$ for
every linear form $\psi\ge \psi_\Ga$ satisfying
$\psi({\mathsf v})=\psi_\Ga({\mathsf v})$ for some $\mathsf{v}\in \L_\Ga\cap\inte\fa^+$ \cite[Thm. 8.4]{Quint2}. 

Let $\i:\fa^+\to \fa^+$ denote the opposition involution given by $\i (\mathsf v)= -\op{Ad}_{w_0} (\mathsf v)$ where $w_0$ is the longest Weyl element. In rank one groups, $\i$ is the identity map.
Letting $\F^{(2)}$ denote the unique open diagonal $G$-orbit in $\cal F\times \cal F$, the quotient space $G/M$ is homeomorphic to
$\F^{(2)}\times \fa$ via the Hopf parametrization which maps
$gM$ to $(gP, gw_0P, \beta_{gP}(e, g))$ for any $g\in G$.

For a given pair of $\G$-conformal measures $\nu_{\psi}$ and $\nu_{\psi\circ \i}$ on $\F$ with respect to $\psi$ and
$\psi\circ \i$ respectively, one can use the Hopf parameterization
to define a non-zero $A$-invariant Borel measure $\m(\nu_{\psi}, \nu_{\psi\circ {\i}})$ on the quotient space $\G\ba G/M$, which is locally equivalent to $d\nu_{\psi}\otimes d\nu_{\psi\circ {\i}}\otimes db$ in the Hopf coordinates, where $db$ denotes the Lebesgue measure on $\fa$;
we will call it the Bowen-Margulis-Sullivan measure (or simply $\BMS$-measure) associated to the pair $(\nu_\psi, \nu_{\psi\circ \i})$ (Section \ref{sec.TG}).  For simplicity, we write
$\m_\psi$ for $ \m(\nu_{\psi}, \nu_{\psi\circ {\i}})$, although the measure depends on the choice of conformal measures $\nu_\psi$ and $\nu_{\psi\circ {\i}}$, not only on $\psi$.

For $\mathsf{u}\in \inte \fa^+$, we will say that $\m_\psi$ is {\it $\mathsf{u}$-balanced} if
\be\label{bald} \limsup_{T\to \infty} \frac{\int_0^T \mathsf m_\psi (\cal O_1\cap \cal O_1\exp (t{\mathsf u})) \,dt }{\int_0^T \mathsf m_\psi (\cal O_2\cap \cal O_2\exp (t{\mathsf u})) \,dt }<\infty\ee  for any bounded Borel subsets $\cal O_i\subset \Ga\ba G/M$ with $\Omega\cap \inte \cal O_i\ne \emptyset$,
where $\Omega=\{[g]\in \Ga\ba G/M: gP, gw_0P\in \Lambda\}$.

Each BMS measure $\m_\psi$ on $\Ga\ba G/M$ can be considered as an $AM$-invariant measure on $\Ga\ba G$,  which we will also denote by $\m_\psi$, by abuse of notation.
While the set $\cal E=\{[g]\in \Gamma\ba G: gP\in \La\}$ is the unique $P$-minimal subset of $\Ga\ba G$, it breaks into finitely many $\pc$-minimal subsets in general where $\pc$ denotes the identity component of $P$. For each $\pc$-minimal subset $Y\subset \G\ba G$, the restriction $\m_\psi|_Y$ gives an $A$-invariant measure.

The conical limit set $\La_\mathsf{con}$ of $\Ga$
is given by
\be\label{con} \La_\mathsf{con}:=\{gP\in\F : \limsup
\Gamma g A^+  \ne \emptyset\}\ee
where $A^+=\exp \fa^+$ and $\limsup$ denotes the topological limit superior, i.e.~all accumulation points of the given family of sets.

\begin{Def} [Directional  conical limit set] \label{conl}
 For each $\mathsf{u}\in \inte \fa^+$,
we define the set of $\mathsf{u}$-directional conical limit points as follows:
$$
\La_{\mathsf u}:=\{gP\in\F : \limsup_{t\to +\infty}
\Gamma g \exp(t{\mathsf u})\ne \emptyset\};
$$
 this is a dense Borel measurable subset of $\La_\mathsf{con}$ if non-empty.
 \end{Def}
  It is easy to see that $\La_{\mathsf u}\ne \emptyset$
only when $\mathsf{u}\in \L_\Ga$.

For $R>0$ and $\mathsf{u}\in \inte \fa^+$, we define the following tube-like subset of $\Ga$ whose Cartan projection lies within distance $R$ from the ray $\br_+{\mathsf u}$:
\begin{equation*}\label{eq.D1}
\Ga_{{\mathsf u},R}:=\{\ga\in\Ga : \norm{\mu(\ga)-t{\mathsf u}}<R\quad \text{ for some }t\geq 0\},\end{equation*}
where $\|\cdot\|$ is an Euclidean norm on $\fa$.
The following theorem extends Theorem \ref{main} to all Zariski dense subgroups of higher rank semisimple real algebraic groups:
\begin{theorem}[The Hopf-Tsuji-Sullivan dichotomy in higher rank]\label{thm.Rob}\label{dio}
Let $G$ be a connected semisimple real algebraic group and $\Ga<G$ be a Zariski dense discrete subgroup. Fix $\psi\in \fa^*$ and
let $\nu_\psi, \nu_{\psi \circ \i}$ be a pair of $(\Gamma,\psi)$ and $(\Ga, \psi\circ \i)$-conformal measures respectively, and let $\m_\psi=\m(\nu_{\psi}, \nu_{\psi\circ \i})$ denote the associated $\op{BMS}$ measure
on $\Ga\ba G/M$.
For any $\mathsf{u}\in \inte \fa^+$,
the following conditions (1)-(5) are equivalent and imply (6).
If  $\psi({\mathsf u})>0$ and $\m_\psi$ is $\mathsf{u}$-balanced, then 
 (6) implies (7). Moreover, the first cases of (1)-(7) can occur only when $\psi({\mathsf u})=\psi_\Ga({\mathsf u})$.
\begin{enumerate}
\item  $\max (\nu_{\psi}(\La_{\mathsf u}),\nu_{\psi\circ \i} (\La_{\i({\mathsf u})})) >0$ (resp. $\nu_{\psi}(\La_{\mathsf u})=0=\nu_{\psi\circ \i} (\La_{\i({\mathsf u})})$);
\item  $\max (\nu_{\psi}(\La_{\mathsf u}),\nu_{\psi\circ \i} (\La_{\i({\mathsf u})})) =1$ (resp. $\nu_{\psi}(\La_{\mathsf u})=0=\nu_{\psi\circ \i} (\La_{\i({\mathsf u})})$);
\item $(\Gamma\ba G/M, \{\exp (t{\mathsf u})\},  \mathsf m_\psi)$ is conservative (resp. totally dissipative);
\item $(\Gamma\ba G/M, \{\exp (t{\mathsf u})\},  \mathsf m_\psi)$ is ergodic (resp. non-ergodic); 
\item  For some (and hence for all) $\pc$-minimal subset $Y\subset \Ga\ba G$, the system
$(Y, \{\exp (t{\mathsf u})\},  \mathsf m_\psi|_{Y})$  is ergodic and conservative (resp. $\mathsf m_\psi (Y)=0$, or non-ergodic and totally dissipative);
\item 
$\sum_{\ga\in\Ga_{{\mathsf u},R}}e^{-\psi(\mu(\ga))}=\infty$ for some $R>0$ $($resp. $\sum_{\ga\in\Ga_{{\mathsf u},R}}e^{-\psi(\mu(\ga))}<\infty$ for all $R>0)$;
\item $\nu_{\psi}(\La_{\mathsf u})=1 =\nu_{\psi\circ \i} (\La_{\i({\mathsf u})})$
(resp. $\nu_{\psi}(\La_{\mathsf u})=0=\nu_{\psi\circ \i} (\La_{\i({\mathsf u})})$).
\end{enumerate}
\end{theorem}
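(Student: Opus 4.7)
The plan is to adapt Roblin's template from the CAT$(-1)$ setting to higher rank, replacing scalar Busemann functions by Quint's $\fa$-valued Busemann cocycle and the radial Poincar\'e series by its $\mathsf u$-directional refinement. The central tool is an $\mathsf u$-directional shadow lemma: for $R$ large enough, to each $\gamma\in\Ga_{\mathsf u,R}$ one attaches a shadow $\cal O_\gamma\subset\cal F$ (the projection from the identity coset of a fixed ball around $\gamma$) satisfying $\nu_\psi(\cal O_\gamma)\asymp e^{-\psi(\mu(\gamma))}$ with constants depending only on $R$. This is immediate from the conformality relation \eqref{eq.conf} once one knows that the $\fa$-valued Busemann cocycle is uniformly bounded on tubes around the $\mathsf u$-ray, a boundedness that uses crucially $\mathsf u\in\inte\fa^+$ so that $\gamma^{-1}K$ lies in a uniform compact neighborhood of the $-\mathsf u$-ray in Cartan coordinates.

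With the shadow lemma in hand, $(1)\Leftrightarrow(6)$ is a Borel--Cantelli / Vitali-covering pair: if $\sum_{\gamma\in\Ga_{\mathsf u,R}}e^{-\psi(\mu(\gamma))}<\infty$, then summing the shadow masses and applying the first Borel--Cantelli lemma to the nested shadow cover of $\La_{\mathsf u}$ gives $\nu_\psi(\La_{\mathsf u})=0$, and the same argument with $\psi\circ\i$ and $\i(\mathsf u)$ handles the other factor; conversely, a greedy shadow selection produces a divergent lower bound for the Poincar\'e sum whenever $\La_{\mathsf u}$ has positive mass. The equivalence $(3)\Leftrightarrow(6)$ proceeds through a transversal: on a small flow-box $\cal T\subset\Ga\ba G/M$ where $\m_\psi$ factors as $d\nu_\psi\otimes d\nu_{\psi\circ\i}\otimes db$ in Hopf coordinates, the integrated return mass $\int_0^T\m_\psi(\cal T\cap\cal T\exp(t\mathsf u))\,dt$ is, via the product structure and the shadow lemma, comparable to $\sum_{\gamma\in\Ga_{\mathsf u,R}}e^{-\psi(\mu(\gamma))}$; Halmos' recurrence theorem then gives conservativity iff the series diverges, and unwinding the Hopf description recovers $\nu_\psi(\La_{\mathsf u})>0$ from conservativity.

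To upgrade $(3)$ to $(4)$, I would run the Hopf ergodic argument: since $\mathsf u\in\inte\fa^+$, the groups $N$ and $N^-$ are the full expanding and contracting horospherical subgroups of $\{\exp(t\mathsf u)\}$, so Birkhoff averages of an $\{\exp(t\mathsf u)\}$-invariant $L^2$ function are constant along $N^\pm$-leaves $\m_\psi$-a.e.; in Hopf coordinates such a function then depends only on the $\fa$-coordinate, and conservativity along $\mathsf u$ forces it to be constant. The equivalence $(4)\Leftrightarrow(5)$ follows from the fact that the $P^\circ$-minimal subsets of $\Ga\ba G$ are permuted transitively by the finite group $M/M^\circ$, so the $\m_\psi$-ergodic decomposition of the $\{\exp(t\mathsf u)\}$-action refines into the restrictions to these pieces, which are mutually singular and individually $A$-invariant. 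The $0$--$1$ law $(1)\Leftrightarrow(2)$ is then a standard consequence: $\La_{\mathsf u}$ is $\Ga$-invariant, and $\Ga$-ergodicity of $(\cal F,\nu_\psi)$ (obtained from $A$-ergodicity of $\m_\psi$ via Fubini in Hopf coordinates) forces its mass to be $0$ or $1$.

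The implication $(6)\Rightarrow(7)$, assuming $\psi(\mathsf u)>0$ and that $\m_\psi$ is $\mathsf u$-balanced, is the genuinely new higher-rank step. The forward Birkhoff sum along $\exp(t\mathsf u)$ records the $\nu_\psi$-mass of $\La_{\mathsf u}$ while the backward one records $\nu_{\psi\circ\i}(\La_{\i(\mathsf u)})$; the ratio in \eqref{bald} controls the relative sizes of these two masses on arbitrary bounded pieces meeting $\Omega$, so $\mathsf u$-balancedness precisely rules out an asymmetry in which one directional conical set has full measure while the other is null. The terminal assertion that the first case requires $\psi(\mathsf u)=\psi_\Ga(\mathsf u)$ is immediate from Quint's inequality $\psi\geq\psi_\Ga$ combined with the divergence of the $\mathsf u$-Poincar\'e series. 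The main obstacle is the Hopf argument in this generality: the nontrivial compact centralizer $M$, the fact that $\{\exp(t\mathsf u)\}$ is only a one-parameter subgroup of the rank $\geq 2$ torus $A$, and the asymmetry between $\psi$ and $\psi\circ\i$ all conspire to make the ergodic-theoretic step considerably subtler than in rank one, and it is precisely the $\mathsf u$-balanced hypothesis in $(6)\Rightarrow(7)$ that carries the genuinely higher-rank content.
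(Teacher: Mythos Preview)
Your overall architecture is recognisable, but there are three genuine gaps, two of which break the logical chain.

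\medskip
\textbf{Circularity in the 0--1 law.} You derive $(1)\Leftrightarrow(2)$ from $\Ga$-ergodicity of $(\cal F,\nu_\psi)$, which you in turn derive from $\{a_t\}$-ergodicity of $\m_\psi$, i.e.\ from condition~(4). But your only route from (1) to (4) is $(1)\Rightarrow(6)\Rightarrow(3)\Rightarrow(4)$, and the step $(6)\Rightarrow(3)$ is not justified: divergence of the integrated return mass $\int_0^\infty \m_\psi(\cal T\cap\cal T a_t)\,dt$ does \emph{not} by itself yield conservativity --- what you call ``Halmos' recurrence'' runs the other way. Obtaining conservativity from divergence of the directional Poincar\'e series requires a second-moment (R\'enyi-type) inequality, and that is exactly where the $\mathsf u$-balanced hypothesis enters; without it the theorem does not claim $(6)\Rightarrow(3)$. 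The paper avoids the circularity by proving the 0--1 law for $\nu_\psi(\La_{\mathsf u})$ \emph{directly}, via a Lebesgue-density argument: a Vitali-type covering lemma for the directional shadows $\{O_R(o,\ga o):\ga\in\Ga_{\mathsf u,R}\}$ shows the associated maximal operator is of weak type $(1,1)$, hence $\nu_\psi$-a.e.\ point of $\La_{\mathsf u,R}$ is a density point. With the 0--1 law already in hand, $(1)\Rightarrow(3)$ follows immediately ($\m_\psi$-a.e.\ orbit recurs), and no passage through~(6) is needed.

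\medskip
\textbf{The Hopf argument in higher rank.} Your sketch of $(3)\Rightarrow(4)$ is incomplete precisely where higher rank bites. Granting that the common value of the forward and backward ratio averages is $N^\pm$-invariant, in Hopf coordinates this makes it a function of the $\fa$-coordinate alone; but $\{\exp(t\mathsf u)\}$-invariance then only forces constancy along the line $\br\mathsf u\subset\fa$, not on all of $\fa$. The sentence ``conservativity along $\mathsf u$ forces it to be constant'' is false as soon as $\dim\fa\ge 2$. The paper closes this gap with the strong transitivity group: for $g$ with $g^\pm\in\La$ one has $\overline{\cal H^s_\Ga(g)}\supset AM_\Ga$ (Guivarc'h--Raugi), and this upgrades the a.e.\ $N^\pm$-invariance to $AM_\Ga$-invariance, whence ergodicity on each $P^\circ$-minimal piece.

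\medskip
\textbf{Role of the balanced hypothesis.} Your description of $(6)\Rightarrow(7)$ misidentifies what $\mathsf u$-balanced does. It is not used to ``rule out an asymmetry'' between $\nu_\psi(\La_{\mathsf u})$ and $\nu_{\psi\circ\i}(\La_{\i(\mathsf u)})$; the symmetry between these two is automatic from $\mu(\ga^{-1})=\i(\mu(\ga))$, which makes the two directional Poincar\'e series identical. Rather, balancedness allows one to compare the correlation integrals $\int_0^T\m_\psi(Q_r\cap Q_r a_t)\,dt$ for boxes $Q_r$ of \emph{different radii}, which is what is needed to match the upper bound (for $Q_r$) and the lower bound (for $Q_{4r}$) in the shadow-to-correlation estimates and thereby establish the second-moment inequality required by the Aaronson--Sullivan form of Borel--Cantelli. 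That lemma then gives $\m_\psi\{x:\int_0^\infty \mathbbm 1_{[Q_r]}(xa_t)\,dt=\infty\}>0$, hence $\nu_\psi(\La_{\mathsf u})>0$, and the 0--1 law finishes.
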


\begin{Rmk}
\begin{enumerate}
\item When $G$ has rank one,  $\psi\circ \i =\psi$ for any $\psi\in \fa^*$, as the opposition involution $\i$ is trivial. Moreover, the
$\m_\psi$ being $\u$-balanced condition is not needed for the implication $(6)\Rightarrow (7)$.
For $\Gamma$ non-elementary, (1)-(7) are all equivalent to each other, except for (5), and for $\Gamma$ Zariski dense, these conditions imply (5).
\item When the rank of $G$ is at least $2$, we need $\Gamma$ to be Zariski dense for the equivalence of (3) and (4). The reason is that, when $\Gamma$ is not Zariski dense, the Jordan projection of $\Gamma$ may not generate a dense subgroup of $A$ while in the rank one case, the Jordan projection of any non-elementary subgroup generates a dense subgroup of $A$ {\cite{Ki}}.
\item We emphasize here that although the implication
$(3) \Rightarrow (1)$ is a direct consequence of the definition of $\m_\psi$, the proof for $(3) \Rightarrow (7)$ under the further $\u$-balanced condition of $\m_\psi$ requires the discussion of the directional Poincare series.
\end{enumerate}\end{Rmk}

 For discrete subgroups of a product of two rank one Lie groups whose projection to each factor is convex cocompact, Burger announced that
 $\nu_\psi(\La_{\mathsf u})=1$ for all $\psi\in\fa^*$ and $\mathsf{u}\in \inte \L_\Ga$ such that $\psi({\mathsf u})=\psi_\Ga({\mathsf u})$ \cite[Thm. 3]{Bu}.  Indeed, we show that this is a special case of a more general phenomenon which holds for all Anosov subgroups whose ambient group has rank at most $3$.

\subsection*{The case of Anosov subgroups} 
Although there are notions of Anosov subgroups with respect to a general
parabolic subgroup \cite{GW}, we will restrict our attention only to those Anosov subgroups with respect to a minimal parabolic subgroup. Recall that
a Zariski dense discrete subgroup $\Ga<G$ is an \textit{Anosov subgroup} (with respect to a minimal parabolic subgroup $P$) if it is a  finitely generated word hyperbolic group which admits a $\Ga$-equivariant embedding $\zeta$ of the Gromov boundary $\partial \Ga$  into $\cal F$ such that $(\zeta(x),\zeta(y))\in\cal F^{(2)}$ for all $x\ne y$ in $\partial\Ga$ \cite[Prop. 2.7 and Thm. 1.5]{GW}.
We note that Zariski dense images of representations of a surface subgroup in the Hitchin component \cite{La} as well as Schottky subgroups provide ample examples of Anosov subgroups (\cite[Prop. 3.3]{Q4}, see also \cite[Lem. 7.2]{ELO}). 
Let $\G$ be an Anosov subgroup for the rest of the introduction.
Set $$ D_\Ga^\star:= \{\psi\in \fa^*: \psi\ge \psi_\Ga, \psi({\mathsf v})=\psi_\Gamma({\mathsf v})\text{ for some  $\mathsf{v}\in\L_\Ga\cap \inte\fa^+$}\} .$$
 For each $\psi\in D_\Ga^\star$, there exists a unique unit vector $\mathsf{v}\in \L_\Ga\cap \inte \fa^+$ such that $\psi({\mathsf v})=\psi_\Gamma({\mathsf v})$ and $\mathsf{v}$ necessarily belongs to $\inte\L_\Ga$ (\cite[Prop. 4.11]{PS} and \cite[Lem. 4.3(i)]{Q4}, see also \cite[Lem. 4.3]{Samb3} and \cite[Thm. A.2(3)]{Car}).

Moreover, for each $\psi\in D_\Ga^\star$,
there exists a unique $(\Ga, \psi)$-conformal probability measure, say $\nu_\psi$, {\it supported on $\La$}
and the map $\psi\mapsto \nu_\psi$
is a homeomorphism between $D_\Ga^\star$ and
the space $\mathcal S_\Ga$ of all $\Gamma$-conformal probability measures supported on $\La$; hence $\cal S_\Ga$ is homeomorphic to the set of unit vectors of $\inte\L_\Ga$ (see \cite[Thm. 1.3]{LO} and references therein).
It was also shown in (\cite{LO}, \cite{LO2}) that for any $\psi\in D_\Ga^\star$ and $\m_\psi =\m(\nu_\psi, \nu_{\psi, \circ \i})$,
\begin{itemize}
    \item $\La=\La_{\mathsf{con}}$;
    \item for any $\pc$-minimal subset $Y\subset \Ga\ba G$, $\m_\psi |_Y$ is $A$-ergodic;
\item $\sum_{\ga\in\Ga} e^{-\psi(\mu(\ga))}=\infty$.
\end{itemize}
 
 On the other hand, the divergence of the directional Poincare 
  series (i.e., $\sum_{\ga\in\Ga_{{\mathsf u},R}}e^{-\psi(\mu(\ga))}$ for some $R>0$) turns out to depend on the rank of $G$:
\begin{theorem} \label{thm.Ano}
Let $\Ga<G$ be an Anosov subgroup. For any $\psi\in D_\Ga^\star$ and $\mathsf{u}\in \inte\fa^+$, the following conditions are equivalent and the first cases of (1)-(4) can occur only when $\mathsf{u}\in \inte\L_\Ga$:
\begin{enumerate}
\item $\op{rank } G\le 3$ and $\psi({\mathsf u})=\psi_\Gamma ({\mathsf u})$ (resp. $\op{rank } G>3$ or $\psi({\mathsf u})\ne \psi_\Gamma ({\mathsf u})$);
\item  $\nu_{\psi}(\La_{\mathsf u})=1=\nu_{\psi\circ \i} (\La_{\i({\mathsf u})}) $ (resp. $\nu_{\psi}(\La_{\mathsf u})=0=\nu_{\psi\circ \i} (\La_{\i({\mathsf u})})$);
\item  For some (and hence for all) $\pc$-minimal subset $Y\subset \Ga\ba G$, the system
$(Y, \{\exp (t{\mathsf u})\},  \mathsf m_\psi|_{Y})$  is ergodic and conservative (resp. non-ergodic and totally dissipative);
\item 
$\sum_{\ga\in\Ga_{{\mathsf u},R}}e^{-\psi(\mu(\ga))}=\infty$ for some $R>0$
 (resp. $\sum_{\ga\in\Ga_{{\mathsf u},R}}e^{-\psi(\mu(\ga))}<\infty$ for all $R>0$).
\end{enumerate}
\end{theorem}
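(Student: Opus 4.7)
The strategy is to combine the higher-rank Hopf-Tsuji-Sullivan dichotomy (Theorem \ref{dio}) with the polynomial local mixing of Chow and Sarkar \cite{CS} for Anosov subgroups. The last clause of Theorem \ref{dio} already shows that the first cases of conditions (2), (3), (4) here -- which match the first cases of (2), (5) and (6)-(7) of Theorem \ref{dio} respectively -- can occur only when $\psi(\mathsf u)=\psi_\Gamma(\mathsf u)$. Since $\Gamma$ is Anosov, $\psi_\Gamma$ is strictly concave on $\inte\L_\Ga$; hence a linear form $\psi\geq\psi_\Gamma$ tangent at some $\mathsf v \in \inte\L_\Ga$ can satisfy $\psi = \psi_\Gamma$ on $\inte\fa^+$ only along the ray $\br_+\mathsf v\subset\inte\L_\Ga$. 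In particular, $\psi(\mathsf u)=\psi_\Gamma(\mathsf u)$ forces $\mathsf u\in\inte\L_\Ga$, yielding the final claim of the theorem. We are thus reduced to proving: when $\psi(\mathsf u) = \psi_\Gamma(\mathsf u)$, the first cases of (2)--(4) hold iff $\op{rank} G \leq 3$; when $\psi(\mathsf u)\ne\psi_\Gamma(\mathsf u)$, the second cases follow directly from Theorem \ref{dio}.

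To apply Theorem \ref{dio} at full strength, we need $\m_\psi$ to be $\mathsf u$-balanced. This is where Chow-Sarkar enter: their local mixing theorem provides an asymptotic of the form
\begin{equation*}
t^{(r-1)/2}\,\m_\psi\bigl(\mathcal O_1\cap \mathcal O_2\exp(t\mathsf u)\bigr) \longrightarrow c_{\mathsf u}\,\nu_1(\mathcal O_1)\,\nu_2(\mathcal O_2) \qquad \text{as } t\to\infty,
\end{equation*}
where $r=\op{rank} G$, $c_{\mathsf u}>0$, and $\nu_1,\nu_2$ are the associated Burger-Roblin-type measures on $\G\ba G$, positive on bounded open sets meeting $\Omega$. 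Taking the ratio of two such integrals over $[0,T]$ yields the $\mathsf u$-balanced property \eqref{bald}, so Theorem \ref{dio} applies and produces the equivalences (2) $\Leftrightarrow$ (3) $\Leftrightarrow$ (4), combined with the $A$-ergodicity of $\m_\psi|_Y$ on each $\pc$-minimal subset $Y$ from \cite{LO, LO2}.

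It remains to identify when these conditions actually hold, and this is where the rank enters. By the standard recurrence criterion for conservative flows, conservativity of $\{\exp(t\mathsf u)\}$ with respect to $\m_\psi$ on bounded $\mathcal O$ meeting $\Omega$ is equivalent to the divergence of $\int_0^\infty \m_\psi(\mathcal O \cap \mathcal O\exp(t\mathsf u))\,dt$. Inserting the Chow-Sarkar asymptotic reduces this to the divergence of $\int_1^\infty t^{-(r-1)/2}\,dt$, which holds precisely when $(r-1)/2 \leq 1$, i.e., $\op{rank} G \leq 3$. This closes the equivalence chain.

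The main obstacle is extracting sufficient uniformity from Chow-Sarkar's local mixing to legitimize the ratio manipulation in the definition of $\mathsf u$-balancedness and the integral manipulation in the recurrence criterion, together with ensuring positivity of the Burger-Roblin limits on the relevant class of test sets. A secondary technical point is the passage between conservativity on $\G\ba G/M$ and on each individual $\pc$-minimal subset $Y$, which is handled by noting that $\exp(t\mathsf u)\in A\subset \pc$ preserves each $Y$ and that the different $Y$'s are disjoint components of positive $\m_\psi$-mass in the $\pc$-minimal decomposition of $\mathcal E$.
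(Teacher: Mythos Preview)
Your strategy is broadly correct and close to the paper's, but there is a genuine gap in the key step where you invoke a ``standard recurrence criterion'' asserting that conservativity of $(\Gamma\backslash G/M,\mathsf m_\psi,\{\exp(t\mathsf u)\})$ is \emph{equivalent} to divergence of $\int_0^\infty \mathsf m_\psi(\mathcal O\cap\mathcal O\exp(t\mathsf u))\,dt$. Only one direction is elementary: if the flow is conservative, a bounded $\mathcal O$ of positive measure cannot be wandering, so some positive-measure subset has infinite sojourn time, and Fubini gives divergence of the correlation integral. This yields the implication $\mathrm{rank}\,G\ge 4\Rightarrow$ dissipative. The converse direction---divergence of the correlation integral forces conservativity---is \emph{not} a standard fact for flows: a wandering set $W$ only guarantees $\int_{\mathbb R}\mathbbm 1_W(x a_t)\,dt<\infty$ pointwise a.e., which does not make $\int_W\!\int_{\mathbb R}\mathbbm 1_W(x a_t)\,dt\,d\mathsf m$ finite. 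Chow--Sarkar gives you the first-moment asymptotic $\mathsf m_\psi(\mathcal O\cap\mathcal O a_t)\sim c\,t^{-(r-1)/2}$, but to upgrade divergence to recurrence you need a second-moment bound of Renyi/Borel--Cantelli type, controlling $\mathsf m_\psi(\mathcal O\cap\mathcal O a_t\cap\mathcal O a_s)$; the crude estimate $\mathsf m_\psi(\mathcal O\cap\mathcal O a_t\cap\mathcal O a_s)\le \mathsf m_\psi(\mathcal O\cap\mathcal O a_{s-t})$ is too weak for $r\ge 2$. This is precisely the content of Proposition~\ref{lem.EE} (the inequality \eqref{eq2}) together with Lemma~\ref{lem.BC}, and it does not follow from ``uniformity'' in local mixing.

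The paper closes the loop by passing through the directional Poincar\'e series rather than the correlation integral directly: Proposition~\ref{lem.EE} sandwiches the correlation integral between the Poincar\'e partial sums, so Corollary~\ref{lem.DC} (which is exactly your Chow--Sarkar computation) gives $\mathrm{rank}\,G\le 3\Leftrightarrow$ the Poincar\'e series diverges (Theorem~\ref{cor.D}); then Theorem~\ref{dio}(6)$\Rightarrow$(7), whose proof is the Borel--Cantelli argument of Proposition~\ref{bal}, yields conservativity. In other words, you have correctly identified the mechanism (polynomial decay $t^{-(r-1)/2}$) but tried to feed it into the wrong black box. A minor point: in your statement of Chow--Sarkar the limit constants are $\kappa_{\mathsf u}\,\mathsf m_\psi(f_1)\mathsf m_\psi(f_2)$ with the BMS measure, not Burger--Roblin measures.
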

For $\psi\in D_\Ga^\star$ and $\mathsf{u}\in \inte\L_\Ga$ with  $\psi({\mathsf u})=\psi_\Gamma ({\mathsf u})$, Chow and Sarkar proved in \cite{CS} the following local mixing result that for any
$f_1, f_2\in C_c(\Ga\ba G)$,
\be\label{cs} \lim_{t\to +\infty} t^{(\op{rank}G -1)/2}\int_{\Ga\ba G} f_1(x \exp t{\mathsf u}) f_2(x) d\m_\psi(x) =\kappa_{\mathsf u}\, \m_\psi (f_1)\m_\psi(f_2)\ee
for some constant $\kappa_{\mathsf u}>0$ depending only on $\mathsf{u}$ (see \cite{Samb} where this is proved for $M$-invariant functions for some special cases).

 Using the shadow lemma (Lemma \ref{lem.shadow}), we deduce from this local mixing result \eqref{cs} that
the $\mathsf{u}$-directional Poincare series 
$\sum_{\gamma\in \Ga_{{\mathsf u}, R}} e^{-\psi(\mu(\ga))}$ diverges if and only if $\op{rank} G\le 3$. Together with Theorem \ref{dio}, this implies Theorem \ref{thm.Ano}.

Let $\m_\psi^{\BR}$ denote the Burger-Roblin measure associated to $\nu_\psi$, that is,
the $MN$-invariant measure on $\Ga\ba G$ which is induced from a measure on $G/M$ locally equivalent to $d\nu_\psi \otimes dm_o\otimes db$ where $m_o$ is the $K$-invariant probability measure on $\cal F$ (cf. \cite[Sec. 3]{ELO}).  
Lee and Oh proved that each $\m_\psi^{\BR}$ is $MN$-ergodic
and its restrictions to $\pc$-minimal subsets of $\G\ba G$ yield all $N$-ergodic components (\cite[Thm. 10.1]{LO}, \cite[Thm. 1.3]{LO2}).
For $\mathsf{u}\in \inte\fa^+$, we consider the following directional recurrent set
$$\cal R_{\mathsf u}:=\{x\in \Ga\ba G: \limsup_{t\to +\infty}
x \exp(t{\mathsf u})\ne \emptyset\}.$$
Since $\mathsf u\in \inte\fa^+$, this is a $P$-invariant dense Borel subset of $\cal E$.

An immediate consequence of Theorem \ref{thm.Ano} is
the following:
\begin{cor}\label{bb2} For any $\psi\in D_\Ga^\star$ and $\mathsf{u}\in \inte \fa^+$, we have
\begin{enumerate}
\item If $\op{rank} G\le 3$ and $\mathsf{u}\in \inte\L_\Ga$ with $\psi({\mathsf u})=\psi_\Ga({\mathsf u})$, then $$\m^{\BR}_\psi(\G\ba G -\cal R_{\mathsf u})=0.$$

\item In all other cases,
$\m_\psi^{\BR}(\cal R_{\mathsf u})=0$.
\end{enumerate}
\end{cor}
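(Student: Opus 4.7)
The plan is to exploit the fact that the directional recurrent set $\cal R_{\mathsf u}$ is a right-$P$-invariant subset of $\Ga\ba G$ whose ``footprint'' on the Furstenberg boundary $\cal F$ is precisely $\La_{\mathsf u}$, while $\m^{\BR}_\psi$ disintegrates along $\nu_\psi$ in this forward boundary direction. A Fubini-type computation should then reduce the statement to the $\nu_\psi$-measure of $\La_{\mathsf u}$, which is supplied by Theorem \ref{thm.Ano}.

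First I would establish right-$P$-invariance of $\cal R_{\mathsf u}$ and the identification $[g]\in\cal R_{\mathsf u}\iff gP\in\La_{\mathsf u}$. For $p=man\in MAN=P$,
$$gp\exp(t\mathsf u)=gma\exp(t\mathsf u)\bigl(\exp(-t\mathsf u)\,n\,\exp(t\mathsf u)\bigr),$$
and since $\mathsf u\in\inte\fa^+$ the bracketed factor tends to the identity as $t\to+\infty$; hence $\Ga gp\exp(t\mathsf u)$ accumulates in $\Ga\ba G$ iff $\Ga g\exp(t\mathsf u)$ does. Thus $\cal R_{\mathsf u}$ is $MAN$-invariant and coincides with the preimage of $\Ga\ba\La_{\mathsf u}$ under $\Ga\ba G\to\Ga\ba\cal F$.

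Next I would invoke the local product description of $\m^{\BR}_\psi$: on each small open set in $\Ga\ba G/M$ lifted from a Hopf chart, $\m^{\BR}_\psi$ is equivalent to $d\nu_\psi(\xi)\otimes dm_o(\eta)\otimes db$, and lifts to $\Ga\ba G$ via Haar on $M$. Since the characteristic function of $\cal R_{\mathsf u}$ depends only on the $\xi$-coordinate, a Fubini argument over a countable cover by such charts yields
\begin{equation*}
\m^{\BR}_\psi(\cal R_{\mathsf u})=0\iff \nu_\psi(\La_{\mathsf u})=0,\qquad
\m^{\BR}_\psi(\Ga\ba G\setminus\cal R_{\mathsf u})=0\iff \nu_\psi(\cal F\setminus\La_{\mathsf u})=0.
\end{equation*}

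To conclude, I apply Theorem \ref{thm.Ano}: in case (1) the hypotheses match its first alternative, so $\nu_\psi(\La_{\mathsf u})=1$ and hence $\m^{\BR}_\psi(\Ga\ba G\setminus\cal R_{\mathsf u})=0$; in case (2) we fall into the second alternative (in particular, the clause that the first alternatives can only occur when $\mathsf u\in\inte\L_\Ga$ handles the sub-case $\mathsf u\notin\inte\L_\Ga$), so $\nu_\psi(\La_{\mathsf u})=0$ and $\m^{\BR}_\psi(\cal R_{\mathsf u})=0$. The main subtlety is the Fubini step on the quotient $\Ga\ba G$: one must check that the $\Ga$-equivariant identification of local Hopf charts, together with the Haar lift by $M$, preserves the product structure cleanly. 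Once this is in place, the rest is a direct repackaging of Theorem \ref{thm.Ano}.
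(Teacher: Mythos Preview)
Your proposal is correct and follows exactly the argument the paper has in mind: the paper states the corollary as ``an immediate consequence of Theorem \ref{thm.Ano}'' and has already noted (just before the statement) that $\cal R_{\mathsf u}$ is a $P$-invariant Borel subset of $\cal E$, which together with the local product form $d\nu_\psi\otimes dm_o\otimes db$ of $\m_\psi^{\BR}$ is precisely your reduction to $\nu_\psi(\La_{\mathsf u})$. Your write-up simply makes explicit the two steps the paper leaves implicit; there is no genuine difference in approach.
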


This corollary is one of the main ingredients of the recent measure classification result \cite[Thm. 1.1]{LLLO}.

{\it{Added:}}  After we sent this paper to Andres Sambarino, he sent us a note giving a rough outline of a proof of the statement that for Anosov groups and for $\psi(\u)=\psi_\Ga(\u)$, the ergodicity of $(\Ga\ba G/M, \exp (t\u), \m_\psi)$  holds if and only if $\op{rank }\,G\le 3$, with a different approach based on the work of Guivarch.
\subsection*{Organization}
In section 2, we collect basic definitions. In section 3, we show that the set of directional conical limit points is either null or conull for any $(\Ga,\psi)$-conformal measure.
In section 4, we prove that the conservativity  of the Bowen-Margulis-Sullivan measure for one parameter diagonal flow implies its ergodicity, extending Hopf's argument.
In section 5, we relate the directional Poincare series with respect to $\psi$ and the correlation functions of the BMS measures and provide the proof of Theorem \ref{dio}.
In section 6, we specialize to Anosov groups and prove Theorem \ref{thm.Ano}.

\section{Preliminaries}\label{sec.pre}
Let $G$ be a connected, semisimple real algebraic group. We  decompose $\frak g=\op{Lie} G$ as 
$\frak g=\frak k\oplus\mathfrak{p}$, where $\frak k$ and $\frak p$ are the $+ 1$ and $-1$ eigenspaces of 
a Cartan involution $\theta$ of $\frak g$, respectively. 
We denote by $K$ the maximal compact subgroup of $G$ with Lie algebra $\frak k$, and by $X=G/K$ the associated symmetric space.
Choose a maximal abelian subalgebra $\frak a$ of $\frak p$ and
 a closed positive Weyl chamber $\frak a^+$ of $\frak a$. Set $A:=\exp \frak a$ and $A^+=\exp \frak a^+$. The centralizer of $A$ in $K$ is denoted by $M$. Consider the following pair of opposite maximal horospherical subgroups:
  $$N=N^-:=\{g\in G: a^{-n} g a^n\to e\text{ as $n\to +\infty$}\}\text{ and }$$
  $$ N^+:=\{g\in G: a^n g a^{-n}\to e\text{ as $n\to +\infty$}\}$$
  for any $a\in \inte A^+$; this definition is independent of the choice of $a\in \inte A^+$.
 
We set $$P=MAN,\quad\text{and} \quad P^+=MAN^+;$$ they are minimal parabolic subgroups of $G$ and $P\cap P^+=MA$.
The quotient space $\cal F=G/P$ is called the Furstenberg boundary of $G$, and via the Iwasawa decomposition $G=KP$,
$\cal F$ is isomorphic to $K/M$.

Let $\op N_K(\frak a)$  be the normalizer of $\frak a$ in $K$, and $\cal W:=\op N_K(\frak a)/M$ denote the Weyl group. Fixing a left $G$-invariant and right $K$-invariant Riemannian metric $d$ on $G$ induces
a Riemannian metric  on the associated symmetric space $X=G/K$, which we also denote by $d$ by abuse of notation. We denote by $\langle \cdot,\cdot\rangle$ and $\|\cdot \|$ 
the associated $\cal W$-invariant inner product and norm on $\frak a$.  

For $R>0$, set $A_R=\{a\in A:  \|\log a\|\le R\}$, $A_R^+=A_R\cap A^+$, and
$$G_R:=KA_R^+K.$$

\subsection*{$\fa$-valued Buseman functions}
The product map $K\times A\times N\to G$ is a diffeomorphism, yielding the well-known Iwasawa decomposition $G=KAN$.
 The Iwasawa cocycle $\sigma: G\times \cal F \to \frak a$ is defined as follows: for $(g, \xi)\in G\times \cal F$ with $\xi=[k]$ for $k\in K$,
$\exp \sigma(g,\xi)$ is the $A$-component of $g k$ in the $KAN$ decomposition, that is,
$$gk\in K \exp (\sigma(g, \xi)) N.$$

\begin{Def}\label{Bu} \rm The $\frak a$-valued Busemann function $\beta: \cal F\times G \times G \to\frak a $ is defined as follows: for $\xi\in \cal F$ and $g, h\in G$,
 $$\beta_\xi ( g, h):=\sigma (g^{-1}, \xi)-\sigma(h^{-1}, \xi).$$
\end{Def}

Denote by $w_0\in \cal W$ the unique element of $\cal W$ such that $\op{Ad}_{w_0}\frak a^+= -\frak a^+$. 
 \begin{Def}[Visual maps]\rm  
 For each $g\in G$, we define 
   $$g^+:=gP\in G/P\quad\text{and}\quad g^-:=gw_0P \in G/P.$$
Note that for $g\in G$, $g^{\pm}=g(e^{\pm})$.
\end{Def}

The opposition involution $\i:\frak a \to \frak a$ is
defined by \be\label{inv} \i ({\mathsf v})= -\op{Ad}_{w_0} ({\mathsf v}).\ee
When $G$ is a product of rank one groups, $\i$ is trivial.

 The set $\cal F^{(2)}=\{(g^+, g^-)\in \cal F\times \cal F: g\in G\}=G. (e^+, e^-)$ is the unique open $G$-orbit.
The $\frak a$-valued Gromov product on $\cal F^{(2)}$ is defined as follows:
for $(g^+, g^-) \in\cal F^{(2)}$,
$$
\cal G(g^+,g^-): = \beta_{g^+}(e,g)+\op i{\left(\beta_{g^-}(e,g)\right)}.
$$

\begin{lem}\label{lem.BPS}\cite[Prop. 8.12]{BPS}
There exist $c, c'>0$ such that for all $g\in G$,
$$
{c^{-1}}\norm{\cal G(g^+,g^-)}\leq d(o,gAo)\leq c\, \norm{\cal G(g^+,g^-)}+c'.
$$
\end{lem}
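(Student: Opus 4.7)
The key ingredient underlying the proof is the \emph{flat-invariance identity}
\[
\mathcal G(g^+,g^-) \;=\; \beta_{g^+}(o,o') + \i\bigl(\beta_{g^-}(o,o')\bigr) \qquad \text{for every } o' \in gAo.
\]
I would verify this first by a direct cocycle calculation: using $\sigma(a^{-1},P) = -\log a$ and $\sigma(a^{-1},w_0P) = \i(\log a)$ (the latter via $a^{-1}w_0 = w_0\exp(\i \log a)$, since $w_0^2 \in M$), one obtains $\beta_{g^+}(g,ga) = \log a$ and $\beta_{g^-}(g,ga) = -\i(\log a)$ for $a \in A$. Thus translating $o'$ along the flat $gAo$ by $a$ adds $\log a$ to the first summand and $-\log a$ to $\i$ of the second, leaving the total invariant; evaluating at $o'=go$ recovers $\mathcal G(g^+,g^-)$ by definition.

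For the lower inequality $\|\mathcal G(g^+,g^-)\| \le c\, d(o,gAo)$, I would apply the identity to $o' = \pi_{gAo}(o)$, the orthogonal projection of $o$ onto the flat (well-defined because maximal flats are closed, convex, totally geodesic subsets of the CAT$(0)$ symmetric space $X$). The standard Lipschitz estimate $\|\beta_\xi(x,y)\|_\fa \le C\,d(x,y)$ applied to both terms then yields $\|\mathcal G(g^+,g^-)\| \le 2C\, d(o,o') = 2C\,d(o,gAo)$, giving the inequality with $c \ge 2C$.

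The upper inequality $d(o,gAo)\le c\,\|\mathcal G(g^+,g^-)\|+c'$ is the substantive step. The geometric point is that at the foot $o'=\pi_{gAo}(o)$, the geodesic $[o,o']$ meets the flat perpendicularly, and with this configuration the two summands $\beta_{g^+}(o,o')$ and $\i(\beta_{g^-}(o,o'))$ lie in the same open half-space of $\fa$---they add constructively, not destructively. A rank-one check illustrates this: in $\bH^2$, both Busemann functions evaluated at the perpendicular foot equal $-\log\cosh d(o,o')$. In higher rank, the antipodality of the two chambers $g^+$ and $g^-$ as seen from $o'$ in the flat $gAo$, combined with the transverse-Hessian structure of Busemann functions at $o'$, yields a lower bound of the form
\[
\bigl\|\beta_{g^+}(o,o') + \i(\beta_{g^-}(o,o'))\bigr\| \;\ge\; c_1\, d(o,o') - c_2.
\]
Feeding this back into the identity gives the claimed bound with $c \ge c_1^{-1}$ and $c' = c_1^{-1}c_2$.

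The main obstacle is precisely the no-cancellation estimate used in the upper inequality: the Lipschitz bound underlying the lower inequality is immediate, but ruling out destructive interference between $\beta_{g^+}(o,o')$ and $\i(\beta_{g^-}(o,o'))$---and showing that both contribute on the order of $d(o,o')$---requires the transverse analysis at the perpendicular foot that is the content of Proposition 8.12 of \cite{BPS}.
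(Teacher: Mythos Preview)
The paper does not prove this lemma at all: it is stated with a direct citation to \cite[Prop.~8.12]{BPS} and no argument is given. So there is no ``paper's own proof'' to compare against.

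Your sketch is a reasonable outline of how such a proof goes. The flat-invariance identity and its verification via the cocycle property are correct, and the lower bound follows cleanly from it together with the Lipschitz estimate for the $\fa$-valued Busemann map. For the upper bound you correctly identify the non-cancellation at the perpendicular foot as the substantive point, but you do not actually carry it out---you end by invoking \cite[Prop.~8.12]{BPS}, which is exactly what the paper does. In effect your proposal supplies a proof of the easy direction and a heuristic for the hard one, while still relying on the cited reference for the latter; that is more than the paper offers, but it is not a self-contained proof.
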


\begin{Def} [Cartan projection] \rm
For $g\in G$, there exists a unique element $\mu(g)\in \frak a^+$, called the Cartan projection of $g$, such that
\begin{equation*}
g\in K\exp(\mu(g))K.
\end{equation*}
\end{Def}

When $\mu(g)\in \inte \frak a^+$ and $g=k_1 \exp(\mu(g))k_2$,
we write $\kappa_1(g):=[k_1]\in K/M$ and
$\kappa_2(g):={k_2}\in M\ba K$, which are well-defined.

In the whole paper, we  fix the constant $d=d(G)\ge 2$ as in the following lemma.
\begin{lem}\cite[Lem. 5.8]{LO} \label{com} There exists $d\ge 2$ such that
for any $R>1$ and any $g\in G$,
$$\mu(G_R g G_R)\subset \mu(g)+ \fa_{dR}.$$
\end{lem}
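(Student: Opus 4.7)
The plan is to first use the $K$-biinvariance of the Cartan projection $\mu$ to reduce to a statement purely about multiplication by $A$-elements. Any element of $G_R = KA_R^+K$ has the form $kak'$ with $a \in A_R^+$ and $k, k' \in K$. Hence for $g_1 = k_1 a_1 k'_1 \in G_R$ and $g_2 = k_2 a_2 k'_2 \in G_R$, we have $g_1 g g_2 = k_1 a_1 (k'_1 g k_2) a_2 k'_2$, so
$$\mu(g_1 g g_2) = \mu\bigl(a_1 (k'_1 g k_2) a_2\bigr).$$
Setting $h := k'_1 g k_2$, note that $\mu(h) = \mu(g)$, so the claim reduces to producing a constant $d \ge 2$ depending only on $G$ such that for every $h \in G$ and every $a_1, a_2 \in A_R^+$,
$$\|\mu(a_1 h a_2) - \mu(h)\| \le dR.$$

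The main ingredient I would invoke is the standard sub-additivity estimate for Cartan projections: there exists $C = C(G) > 0$ such that
$$\|\mu(xyz) - \mu(y)\| \le C\bigl(\|\mu(x)\| + \|\mu(z)\|\bigr) \qquad \text{for all } x,y,z \in G.$$
This is a well-known fact which can be proved by fixing a faithful representation $\rho: G \to \op{SL}_N(\br)$, under which $\mu$ becomes, up to a bounded linear correction, the vector of log singular values, together with the elementary multiplicative bounds $\sigma_1(\rho(x))^{-1}\sigma_i(\rho(y)) \le \sigma_i(\rho(xy)) \le \sigma_1(\rho(x))\sigma_i(\rho(y))$. Alternatively one can argue geometrically, using that $\|\mu(g)\| = d(o, go)$ on the symmetric space $X = G/K$ together with $G$-invariance of $d$ and the $\cal W$-invariance of the norm on $\fa$.

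Granting this estimate, I apply it with $x = a_1$, $y = h$, $z = a_2$. Since each $a_i \in A_R^+$ is in $\fa^+$ already, we have $\mu(a_i) = \log a_i$ (the Cartan projection of an element of $A^+$ is just its logarithm), so $\|\mu(a_i)\| = \|\log a_i\| \le R$. Therefore
$$\|\mu(a_1 h a_2) - \mu(h)\| \le 2CR,$$
and setting $d := \max(2C, 2)$ finishes the proof.

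The only genuinely nontrivial input is the general three-term inequality for $\mu$; once that is in hand, everything else is $K$-biinvariance bookkeeping. I expect this standard estimate to be the main (though well-documented) technical point of the lemma, which is why the argument is typically referenced rather than reproduced.
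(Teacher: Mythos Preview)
The paper does not give its own proof of this lemma; it is quoted directly from \cite[Lem.~5.8]{LO}. Your argument is correct and is the standard one: use $K$-biinvariance of $\mu$ to reduce to bounding $\mu(a_1 h a_2)-\mu(h)$ with $a_i\in A_R^+$, then apply the sub-additivity estimate $\|\mu(xy)-\mu(y)\|\le C\|\mu(x)\|$ (proved, as you indicate, via singular values in a faithful representation, or equivalently via Kostant convexity) twice and the triangle inequality. There is nothing further to compare.
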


 \begin{Def} \rm We say that a sequence
$g_i\to \infty$ regularly in $G$  if 
$\alpha(\mu(g_i))\to\infty$ as $i\to\infty$ for every simple root $\alpha$ corresponding to $\fa^+$.
 \end{Def}
 
 \begin{Def}\label{acc} \rm
\begin{enumerate}
\item A sequence $g_i\in G$ is said to converge to $\xi\in\cal F$, if $g_i\to\infty$ regularly in $G$ and $\lim\limits_{i\to\infty}\kappa_1(g_i)^+=\xi$.
\item A sequence $p_i=g_i(o) \in X$ is said to converge to $\xi\in \cal F$ if $g_i$ does. 
\end{enumerate}
\end{Def}


\begin{Def}[Limit set] For a Zariski dense discrete subgroup $\G< G$, we define the limit set $\La$ of $\Ga$ as follows: fixing $p\in X$,
$$
\La :=\left\{
\begin{array}{c}
\lim\limits_{i\to\infty} \ga_i p \in\cal F:  \ga_i\in\Ga
\end{array}
\right\}.
$$
By \cite[Lem. 2.13]{LO}, this definition is independent of the choice of $p\in X$ and coincides with one given by Benoist \cite[Def. 3.6]{Ben}; in particular, it is the unique $\Gamma$-minimal subset of $\F$.
\end{Def}

We later use the fact that $\La$ is a Zariski dense subset of $\F$ \cite[Lem. 3.6]{Ben}.

For any real-valued  functions $f(t)$ and $g(t)$, we write $f(t)\ll g(t) $ if there exists $C>0$ such that $f(t)\le C g(t)$ for all $t>1$. We write $f(t)\asymp g(t)$ if
$f(t)\ll g(t)$ and $g(t)\ll f(t)$.

\section{A zero-one law for $\nu_\psi(\La_{\mathsf u})$}
Let $\Ga<G$ be a Zariski dense discrete subgroup of $G$. Fix $\psi\in \fa^*$, and
a $(\Gamma, \psi)$-conformal measure $\nu_\psi$ on $\F$ as defined in \eqref{eq.conf}.

Recalling the notation $\La_{\mathsf u}$
from Definition \ref{conl}, the goal of this section is to prove the following dichotomy: 
\begin{prop}\label{lem.01} For any $\mathsf{u}\in \inte\fa^+$,
we have  $$\nu_\psi(\La_{\mathsf u})=1\quad\text{ or }\quad \nu_\psi(\La_{\mathsf u})=0.$$
\end{prop}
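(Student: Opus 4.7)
The plan is to combine the $\G$-invariance of $\La_{\mathsf u}$ with a Lebesgue-density argument adapted to the conformality \eqref{eq.conf}. First I would check that $\La_{\mathsf u}$ is $\G$-invariant: if $gP\in\La_{\mathsf u}$ is witnessed by a sequence $t_n\to+\infty$ along which $\G g\exp(t_n\mathsf u)$ accumulates in $\G\ba G$, then $\G(\ga_0 g)\exp(t_n\mathsf u)=\G g\exp(t_n\mathsf u)$ for every $\ga_0\in\G$, so the same witness shows $\ga_0 gP\in\La_{\mathsf u}$. The remaining task is to upgrade positivity of $\nu_\psi(\La_{\mathsf u})$ to full measure.

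Assuming $\nu_\psi(\La_{\mathsf u})>0$, I would fix a $\nu_\psi$-Lebesgue density point $\xi_0\in\La_{\mathsf u}$ for a basis $\{B_n\}$ of shadow-type neighborhoods on $\F$ shrinking to $\xi_0$. Because $\xi_0\in\La_{\mathsf u}$, there exist $\ga_n\in\G$ and $t_n\to+\infty$ with $\mu(\ga_n)=t_n\mathsf u+O(1)$ (so $\ga_n\in\G_{\mathsf u,R}$ for some fixed $R$) and $\kappa_1(\ga_n)\to\xi_0$. Choosing the radii of the $B_n$ to match the decay of the shadows cast by $\G_{\mathsf u,R}$-orbits at $o$, the right action by $\ga_n^{-1}$ carries $B_n$ onto a neighborhood of uniformly bounded size around a limit point $\eta_0\in\F$.

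The crucial estimate is that because $\ga_n$ stays in the $\mathsf u$-tube, the Iwasawa cocycle $\xi\mapsto\beta_\xi(e,\ga_n^{-1})$ varies by only a bounded amount on $B_n$, so by \eqref{eq.conf}
\[
\frac{\nu_\psi(\ga_n^{-1}A)}{\nu_\psi(\ga_n^{-1}B_n)}\asymp\frac{\nu_\psi(A)}{\nu_\psi(B_n)}\qquad\text{for every Borel }A\subset B_n,
\]
with constants independent of $n$. Applied to $A=\La_{\mathsf u}\cap B_n$ and using the $\G$-invariance of $\La_{\mathsf u}$ from the first step, the density of $\La_{\mathsf u}$ at $\xi_0$ transplants to a uniform lower density on a fixed-size neighborhood of $\eta_0$. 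Varying $\xi_0$ among density points of $\La_{\mathsf u}$ (and the approaching sequences $\ga_n$), the targets $\eta_0$ sweep out a dense subset of $\La$ by minimality of $\G$ on $\La$; combined with the fact that $\nu_\psi$ is supported on $\La$ under Zariski density, this forces $\nu_\psi(\La_{\mathsf u})=1$.

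The main obstacle will be choosing the family $\{B_n\}$ so that simultaneously (i) it is a differentiation basis for $\nu_\psi$, and (ii) the conformal distortion under each $\ga_n^{-1}$ is uniform in $n$. In rank one these two demands are automatically satisfied by metric balls on $\partial X$; in higher rank, the correct neighborhoods are shadows cast by $A^+$-orbits in the direction $\mu(\ga_n)$, and verifying the corresponding Lebesgue density theorem together with a uniform distortion bound (via the shadow lemma of Lemma \ref{lem.shadow} below) is the technical heart of the argument.
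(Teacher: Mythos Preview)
Your plan is essentially the paper's: find a density point of $\La_{\mathsf u}$ with respect to the family of shadows $\{O_R(o,\ga o):\ga\in\G_{\mathsf u,R}\}$, transport by $\ga_n^{-1}$ using conformality together with the bounded oscillation of the Busemann cocycle on shadows (Lemma~\ref{lem.shadow2}), and then globalize. You have also correctly located the technical core, namely a differentiation theorem for these directional shadows; the paper establishes exactly this via a Vitali-type covering lemma (Lemma~\ref{cover}) specific to $\G_{\mathsf u,R}$ and the resulting weak-type $(1,1)$ maximal inequality (Lemma~\ref{lem.MI}).

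The globalization step, however, has two gaps. First, $\nu_\psi$ is an arbitrary $(\G,\psi)$-conformal measure and need not be supported on $\La$; since $\La_{\mathsf u}\subset\La$, the conclusion $\nu_\psi(\La_{\mathsf u})=1$ already forces $\nu_\psi(\F\setminus\La)=0$, so you cannot assume it. Second, the claim that varying the density point $\xi_0$ makes the targets $\eta_0=\lim\ga_n^{-1}o$ sweep out a dense subset of $\La$ is not a consequence of minimality of the $\G$-action and is not obviously true. The paper sidesteps both issues by a different endgame: it observes that $\ga_n^{-1}O_R(o,\ga_n o)=O_R(\ga_n^{-1}o,o)$ is a \emph{large} shadow viewed \emph{from} near $\eta_0$ (not a small neighborhood of $\eta_0$), and that letting $R\to\infty$ these exhaust $\{\eta:(\eta_0,\eta)\in\F^{(2)}\}$ by \eqref{oo}. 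Thus from a single $\eta_0$ one obtains $\nu_\psi\bigl(\La_{\mathsf u}^c\cap\{\eta:(\eta_0,\eta)\in\F^{(2)}\}\bigr)=0$; any remaining $\eta\in\F$ can be moved into general position with $\eta_0$ by some $\ga\in\G$ (Zariski density of $\G$), and quasi-invariance of $\nu_\psi$ together with $\G$-invariance of $\La_{\mathsf u}$ finishes the proof.
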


The proof of this proposition is based on the study of shadows.
\subsection*{Shadows}
For $p,q\in X=G/K$ and $r>0$, the shadow of the $r$-ball around $q$ as seen from $p$ is defined by
\begin{align*}
O_r(p,q)&:=\{g^+ \in \cal F : go=p,\, gA^+o\cap B(q,r)\neq\emptyset \},
\end{align*}
where $B(q,r)=\{x\in X: d(x, q)<r\}$.

Similarly, for $\xi\in\cal F$, we define the shadow of the $r$-ball around $q$ as seen from $\xi$ to be
$$
O_r(\xi,q):=\{g^+\in\cal F: g^-=\xi,\text{ }go\in B(q,r)\}.
$$

Note the following $G$-equivariance property: for any $g\in G$ and $r>0$,
\begin{align}\label{eq.eqv0}
gO_r(p,q)=O_r(gp,gq)\;\; \text{ and } \;\; gO_r(\xi,q)=O_r(g\xi,gq).
\end{align}

Note that for any $\xi\in \F$, $q\in X$ and $R>0$,
\be\label{oo} \bigcup_{r>R}O_r(\xi,q)=\{\eta\in\cal F :(\xi,\eta)\in\cal F^{(2)}\}.\ee

\begin{lem}\cite[Lem. 5.7]{LO}\label{lem.shadow2}
There exists $\kappa>0$ such that for any $r>0$ and $g\in G$, we have
$$
\sup_{\xi\in O_r(o,go)}\norm{\beta_\xi(e,g)-\mu(g)}\leq\kappa r.
$$
\end{lem}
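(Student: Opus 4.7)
The plan is to reduce the estimate to a reference point $ha\in G$ with $h\in K$ and $a\in A^+$ on which both $\mu$ and $\beta_{h^+}$ can be computed by hand, and then transport the equality from $ha$ to $g$ using the fact that the shadow condition forces $g$ and $ha$ to be $r$-close as points of the symmetric space $X=G/K$.

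Write $\xi=h^+$ for some $h\in K$. By definition of $O_r(o,go)$, there exists $a\in A^+$ such that $d(hao,go)<r$. Set $\mathsf u_0:=\log a\in\fa^+$. I would then record two one-line computations. First, $ha=h\cdot\exp(\mathsf u_0)\cdot e$ is already a Cartan decomposition, so $\mu(ha)=\mathsf u_0$. Second, to compute $\beta_{h^+}(e,ha)=-\sigma((ha)^{-1},h^+)$, observe that $(ha)^{-1}\cdot h=a^{-1}=e\cdot\exp(-\mathsf u_0)\cdot e$ is an Iwasawa decomposition, hence $\sigma((ha)^{-1},h^+)=-\mathsf u_0$, giving $\beta_{h^+}(e,ha)=\mathsf u_0$ as well. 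So for the reference element, the Busemann cocycle and the Cartan projection literally coincide with $\mathsf u_0$.

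Next I would transfer each of these equalities to $g$ with an error linear in $r$. For the Cartan projection: factor $g=(ha)\cdot\bigl((ha)^{-1}g\bigr)$; since $d\bigl((ha)^{-1}go,o\bigr)=d(go,hao)<r$, the second factor lies in $G_r$, and Lemma~\ref{com} yields $\mu(g)\in\mu(ha)+\fa_{dr}$, i.e.\ $\norm{\mu(g)-\mathsf u_0}\le dr$. For the Busemann cocycle, use the identity $\beta_{h^+}(e,g)=\beta_{h^+}(e,ha)+\beta_{h^+}(ha,g)=\mathsf u_0+\beta_{h^+}(ha,g)$ and rewrite the last term via the Iwasawa cocycle relation as $\beta_{h^+}(ha,g)=-\sigma\bigl(g^{-1}ha,(ha)^{-1}h^+\bigr)$. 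The estimate will then follow from a uniform Lipschitz bound of the form
\[
\norm{\sigma(x,\eta)}\le C\, d(xo,o)\qquad\text{for all }x\in G,\ \eta\in\cal F,
\]
applied to $x=g^{-1}ha$, which satisfies $d(xo,o)=d(hao,go)<r$. This bound is standard: smoothness of the Iwasawa decomposition on a compact neighbourhood of $e$ gives a local Lipschitz constant, and the cocycle identity $\sigma(xy,\eta)=\sigma(x,y\eta)+\sigma(y,\eta)$ promotes this to the displayed global inequality by chaining along the geodesic from $o$ to $xo$ in $X$. Combining the two estimates via the triangle inequality gives $\norm{\beta_{h^+}(e,g)-\mu(g)}\le(C+d)r$, so one may take $\kappa:=C+d$.

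The only genuine subtlety in this plan is establishing the global Lipschitz estimate for the Iwasawa cocycle $\sigma$ with a constant depending only on $G$; this is a standard fact, and once available, the rest of the proof is an exact calculation on the reference element $ha$ plus one application of Lemma~\ref{com}. Note the argument is independent of the choice of the witnessing $a\in A^+$ (since the bound is uniform in $\xi$), yielding the desired supremum estimate over $O_r(o,go)$.
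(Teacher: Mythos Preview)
The paper does not supply its own proof of this lemma; it is quoted from \cite[Lem.~5.7]{LO} without argument, so there is nothing in the paper to compare your approach against. Your proof is correct and is essentially the standard one: identify a reference element $ha$ with $h\in K$, $a\in A^+$ on which both $\mu$ and $\beta_{h^+}(e,\cdot)$ equal $\log a$ by direct computation, then control the two discrepancies via Lipschitz bounds on the Cartan projection and the Iwasawa cocycle.

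One small technical point: Lemma~\ref{com} as stated in the paper requires $R>1$, so your invocation of it for the Cartan part only literally yields $\|\mu(g)-\mathsf u_0\|\le dr$ when $r>1$. For small $r$ you should instead use the sharper (and equally standard) subadditivity $\|\mu(gx)-\mu(g)\|\le\|\mu(x)\|=d(xo,o)$, which follows from Kostant's convexity theorem and in fact gives the Cartan estimate with constant~$1$ for all $r>0$. Your chaining argument for the global Lipschitz bound on $\sigma$ is fine and does yield a bound linear in $d(xo,o)$, including for small $r$ (where one simply applies the local estimate once). With this adjustment, $\kappa=C+1$ works uniformly.
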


The following lemma is an immediate consequence of \cite[Lem. 5.6]{LO}:
\begin{lem}\label{lem.F2}
For any $S>0$ and a sequence $g_i\to\infty$ regularly in $G$, we have,
for all sufficiently large $i$, the closure of
$O_S(o,g_io)\times O_S(g_io,o)$ is contained
in
$\cal F^{(2)}.$ 
\end{lem}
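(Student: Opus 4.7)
The plan is to argue by compactness and contradiction, identifying the Hausdorff limits of the shadows $O_S(o,g_io)$ and $O_S(g_io,o)$ in terms of the Cartan decomposition of $g_i$. Since $\F^{(2)}$ is the unique open $G$-orbit in $\F\times\F$, its complement is closed; so if the conclusion fails, I can extract a subsequence (still denoted $(g_i)$) and points $(\xi_i,\eta_i)\in \overline{O_S(o,g_io)\times O_S(g_io,o)}$ with $(\xi_i,\eta_i)\notin\F^{(2)}$. By compactness of $\F\times\F$ and of $K$, I would pass to a further subsequence so that $\xi_i\to\xi_\infty$, $\eta_i\to\eta_\infty$, and, writing $g_i = \tilde k_i\exp(\mu(g_i))\ell_i$ in Cartan decomposition, both $\tilde k_i\to k_\infty$ and $\ell_i\to \ell_\infty$ in $K$. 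It then suffices to show $\xi_\infty = k_\infty\cdot e^+$ and $\eta_\infty = k_\infty\cdot e^-$: since $(e^+,e^-)\in\F^{(2)}$ and $\F^{(2)}$ is $G$-invariant, $(\xi_\infty,\eta_\infty)=k_\infty\cdot(e^+,e^-)\in\F^{(2)}$, contradicting the closedness of $(\F\times\F)\setminus\F^{(2)}$.

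To identify $\xi_\infty$, I would apply Lemma \ref{lem.shadow2}: for $\xi_i\in O_S(o,g_io)$, $\|\beta_{\xi_i}(e,g_i)-\mu(g_i)\|\le \kappa S$. Using $\beta_\xi(e,g)=-\sigma(g^{-1},\xi)$, the cocycle identity for $\sigma$, and the Cartan form of $g_i$, this rewrites as $\|\sigma(\exp(-\mu(g_i)),\tilde k_i^{-1}\xi_i)+\mu(g_i)\|\le\kappa S$. Since $\mu(g_i)\to\infty$ regularly, the element $\exp(-\mu(g_i))$ contracts the open Bruhat cell $N^+\cdot e^+\subset\F$ toward $e^+$, with contraction rate in every simple root direction tending to infinity. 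A standard computation then shows that the points $\eta\in\F$ for which $\sigma(\exp(-\mu(g_i)),\eta)$ stays within $O(S)$ of $-\mu(g_i)$ form a shrinking neighborhood of $e^+$. Hence $\tilde k_i^{-1}\xi_i\to e^+$, i.e.~$\xi_\infty = k_\infty\cdot e^+$.

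For $\eta_\infty$, I would use the equivariance $O_S(g_io,o) = g_i\cdot O_S(o,g_i^{-1}o)$ from \eqref{eq.eqv0}. The sequence $g_i^{-1}$ also tends to infinity regularly since $\mu(g_i^{-1})=\i(\mu(g_i))$, and a Cartan decomposition of $g_i^{-1}$ is $(\ell_i^{-1}w_0)\exp(\i(\mu(g_i)))(w_0^{-1}\tilde k_i^{-1})$ after lifting $w_0$ to the normalizer of $\fa$ in $K$, using $\exp(-\mu)=w_0\exp(\i(\mu))w_0^{-1}$ together with $w_0^2\in M$. The previous paragraph applied to $g_i^{-1}$ shows that $O_S(o,g_i^{-1}o)$ collapses to $\ell_\infty^{-1}w_0\cdot e^+ = \ell_\infty^{-1}\cdot e^-$. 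Then $g_i\cdot \ell_i^{-1}e^- = \tilde k_i\exp(\mu(g_i))\cdot e^- = \tilde k_i\cdot e^-$ since $A$ stabilizes $e^-\in\F$, and this converges to $k_\infty\cdot e^-$ as desired.

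The main obstacle is making rigorous the shrinking of shadows in the second paragraph, where the regularity of $(g_i)$ is indispensable: the Iwasawa cocycle controls the shadow only in the simple root directions in which $\mu(g_i)$ diverges, so along a non-regular direction the contraction in some root is lost and the shadows can remain transversally large. Once this step is executed cleanly, everything else reduces to bookkeeping with the Cartan decomposition, the equivariance \eqref{eq.eqv0}, and the action of $w_0$.
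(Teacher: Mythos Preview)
Your compactness-and-contradiction framework is sound, and your treatment of $\xi_\infty$ is correct: the forward shadows $O_S(o,g_io)$ do collapse to the single point $k_\infty\cdot e^+$, and the Iwasawa-cocycle argument you sketch (with regularity of $g_i$ ensuring contraction in every simple-root direction) is the right mechanism. The paper itself offers no independent argument here, deducing the lemma directly from \cite[Lem.~5.6]{LO}.

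There is, however, a genuine error in your treatment of $\eta_\infty$: the backward shadows $O_S(g_io,o)$ do \emph{not} collapse to a point. Already for $G=\op{PSL}_2(\br)$ with $g_i=\mathrm{diag}(e^i,e^{-i})$, the set $O_S(g_io,o)$ is the shadow of the ball $B(o,S)$ as seen from the receding point $g_io$, and as $i\to\infty$ this converges to a fixed arc in $\partial\bH^2$ (the shadow from the ideal point $\infty$), not to the single point $0=k_\infty\cdot e^-$. Your argument breaks precisely where you apply $g_i$ to the collapsing set $O_S(o,g_i^{-1}o)$: that set shrinks toward $\ell_i^{-1}e^-$ at rate comparable to $e^{-\|\mu(g_i)\|}$, but $\ell_i^{-1}e^-$ is essentially the repelling direction of $g_i$, near which $g_i$ expands at the reciprocal rate, so $g_i\cdot O_S(o,g_i^{-1}o)=O_S(g_io,o)$ retains bounded, not shrinking, diameter. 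The correct target is weaker and still sufficient: show only that every accumulation point $\eta_\infty$ of $O_S(g_io,o)$ is in general position with $k_\infty^+$. This holds once one knows $O_S(g_io,o)\subset O_{S'}(k_\infty^+,o)$ for large $i$ and some $S'>S$, since by definition every point of $\overline{O_{S'}(k_\infty^+,o)}$ lies in $\{\eta:(k_\infty^+,\eta)\in\F^{(2)}\}$; that comparison of shadows from points and from boundary points is exactly the content of \cite[Lem.~5.6]{LO}, and combined with your correct conclusion $\xi_\infty=k_\infty^+$ it yields the desired contradiction.
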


The following shadow lemma plays an important role in our paper. It was first presented in \cite[Thm. 3.3]{Alb} and then in \cite[Thm. 8.2]{Quint2} in slightly different forms.
\begin{lemma}[Shadow lemma] 
\cite[Lem. 7.8]{LO} \label{lem.shadow}
 There exists $S_0>0$ such that
$ c_1:=\inf_{\ga\in\Ga} \nu_\psi (O_{S}(\ga o,o))>0.$ Moreover,
 there exists $\kappa>0$ such that
 for all $S> S_0$,  and for all $\ga\in\Ga$,
$$
 c_1 e^{-\kappa \|\psi\| S} e^{-\psi(\mu(\ga))}\leq\nu_\psi(O_S(o,\ga o))\leq  e^{\kappa \|\psi\| S} e^{-\psi(\mu(\ga))}.
$$
 \end{lemma}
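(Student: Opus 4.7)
The lemma has two parts: positivity $c_1:=\inf_{\ga\in\Ga}\nu_\psi(O_S(\ga o,o))>0$ for $S$ large, and the two-sided estimate on $\nu_\psi(O_S(o,\ga o))$. The plan is to derive the second from the first by a one-line conformality calculation, and treat the first as the main obstacle.

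\emph{From positivity to the two-sided estimate.} Combining the equivariance $\ga\,O_S(\ga^{-1}o,o)=O_S(o,\ga o)$ from \eqref{eq.eqv0} with the conformality identity \eqref{eq.conf} applied to $\ga^{-1}$, and using the Busemann cocycle identity $\beta_\xi(e,\ga^{-1})=-\beta_{\ga\xi}(e,\ga)$, one obtains
\[
\nu_\psi(O_S(o,\ga o))=\int_{O_S(\ga^{-1}o,o)}e^{-\psi(\beta_{\ga\xi}(e,\ga))}\,d\nu_\psi(\xi).
\]
For every $\xi$ in the integration domain, $\ga\xi\in O_S(o,\ga o)$, so Lemma \ref{lem.shadow2} forces $\|\beta_{\ga\xi}(e,\ga)-\mu(\ga)\|\le\kappa S$. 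The integrand is pinched to $e^{-\psi(\mu(\ga))\pm\kappa\|\psi\|S}$, and combining with $c_1\le\nu_\psi(O_S(\ga^{-1}o,o))\le 1$ delivers both inequalities.

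\emph{Positivity.} I would argue by contradiction: if the infimum vanishes for every $S$, pick $\ga_n\in\Ga$ and $S_n\to\infty$ with $\nu_\psi(O_{S_n}(\ga_n o,o))\to 0$. A preliminary reduction shows $\ga_n\to\infty$ regularly, since bounded or non-regularly escaping subsequences would force $O_{S_n}(\ga_n o,o)$ to contain a fixed open subset intersecting $\La$ once $S_n$ is large (via Lemma \ref{com} and Zariski density of $\La$), contradicting the decay. Passing to a subsequence, Definition \ref{acc} gives $\ga_n o\to\xi\in\La$; writing Cartan decompositions $\ga_n=k_1^{(n)}\exp\mu(\ga_n)k_2^{(n)}$ with $k_i^{(n)}\to k_i^\infty$ after a further subsequence, set $\xi^\vee:=k_1^\infty w_0 P\in\F$. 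The geometric heart is: there is a fixed open neighborhood $V\ni\xi^\vee$ and an integer $N$ such that $V\subset O_{S_n}(\ga_n o,o)$ for all $n\ge N$. Equivalently, $\ga_n^{-1}V\subset O_{S_n}(o,\ga_n^{-1}o)$, which requires for each $\zeta\in V$ an element $h\in K$ with $h^+=\ga_n^{-1}\zeta$ and $hA^+o$ passing within $S_n$ of $\ga_n^{-1}o$. Lemma \ref{lem.F2} keeps the relevant pair in $\F^{(2)}$, and Lemma \ref{lem.BPS} bounds $d(o,hAo)$ by the norm of the Gromov product $\cal G(h^+,h^-)$; a Cartan--Iwasawa comparison via Lemma \ref{lem.shadow2} then converts this into the bound $d(\ga_n^{-1}o,hA^+o)<S_n$, uniformly in $n$ once $S_n$ exceeds the Gromov-product constant attached to $V$. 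Since $\xi^\vee=\lim_n\ga_n\kappa_1(\ga_n^{-1})\in\La$ and $\nu_\psi$ is supported on $\La$, $\nu_\psi(V)>0$, contradicting $\nu_\psi(O_{S_n}(\ga_n o,o))\to 0$.

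The main obstacle is this geometric inclusion $V\subset O_{S_n}(\ga_n o,o)$ in higher rank: in rank one it is a short planar angle computation exploiting the fact that a fixed ball of radius $S$ around $o$ subtends a bounded-size shadow from far away on the opposite side, but in higher rank one must track both $K$-components of the Cartan decomposition of $\ga_n$ to locate $\xi^\vee$, and then the full Iwasawa/Cartan interplay encoded in Lemmas \ref{lem.shadow2}, \ref{lem.F2}, and \ref{lem.BPS} is used to propagate the Gromov-product bound to a uniform shadow inclusion. A secondary technical input is the observation that $\xi^\vee$ lies in $\La$, which follows from the description of the limit set as accumulation directions of $\Ga$ in the sense of Definition \ref{acc} applied to the sequence $\ga_n\cdot\kappa_1(\ga_n^{-1})$.
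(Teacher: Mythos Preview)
The paper does not supply a proof here; the lemma is quoted from \cite[Lem.~7.8]{LO}. Your reduction of the two-sided estimate to the positivity statement via equivariance \eqref{eq.eqv0}, conformality, and Lemma~\ref{lem.shadow2} is correct and is exactly the standard route.

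Your positivity argument has a genuine gap at the final step. You conclude $\nu_\psi(V)>0$ from two assertions: that $\xi^\vee:=k_1^\infty w_0P$ lies in $\La$, and that $\nu_\psi$ is supported on $\La$. Neither is available. The measure $\nu_\psi$ is an arbitrary $(\Ga,\psi)$-conformal measure on $\F$ and is not assumed supported on $\La$. More seriously, your justification that $\xi^\vee\in\La$ is incorrect: you write $\xi^\vee=\lim_n\ga_n\kappa_1(\ga_n^{-1})$ and appeal to Definition~\ref{acc}, but if $\kappa_1(\ga_n^{-1})$ is read as an element of $K$ then $\ga_n\kappa_1(\ga_n^{-1})\cdot o=\ga_n o$, whose accumulation point in the sense of Definition~\ref{acc} is $k_1^\infty P$, not $k_1^\infty w_0P$. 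Already in rank one, $\xi^\vee$ is the antipode of a limit point of $\Ga$ and need not lie in $\La$. The standard remedy (as in \cite{Alb}, \cite{Quint2}, \cite{LO}) bypasses $\La$ altogether: after passing to a subsequence with $\ga_n o\to\xi_0\in\F$, use \cite[Lem.~5.6]{LO} together with \eqref{oo} to see that $O_{S_n}(\ga_n o,o)$ eventually contains any compact subset of $\{\eta:(\xi_0,\eta)\in\F^{(2)}\}$, whose complement is a proper Zariski-closed subset of $\F$; since $\nu_\psi$ is quasi-invariant under the Zariski-dense group $\Ga$, it gives this subvariety measure zero (this is precisely the argument the paper invokes after the definition of $\tilde\m$ in Section~\ref{sec.TG}), forcing $\nu_\psi(O_{S_n}(\ga_n o,o))\to 1$ and yielding the contradiction. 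Your handling of the case where $\ga_n$ escapes non-regularly is also only gestured at and would need a separate argument in higher rank.
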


For any $R>0$, set
\be\label{GuR}
G_{{\mathsf u},R}:=\{g\in G : \norm{\mu(g)-t{\mathsf u}}<R\;\; \text{ for some }t\geq 0\}.\ee

\begin{lem}\label{lem.f} Let $R, S>0$.
If $g\in G_{{\mathsf u}, R}$, then
$$O_S(o, go)\subset \{ k^+\in \F: k\exp(t{\mathsf u})o\in B(g o,R+ 2d S) \text{ for some $t>0$}\}.$$
\end{lem}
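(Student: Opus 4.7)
The plan is to unpack the definition of the shadow $O_S(o, go)$, then use Lemma \ref{com} to swap the Cartan-projection direction appearing in the shadowing geodesic for the fixed direction $\mathsf{u}$, paying only a controlled additive cost.

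Given $\xi \in O_S(o, go)$, by definition $\xi = k^+$ for some $k \in K$ (forced by $ko = o$) together with some $\mathsf{w} \in \fa^+$ satisfying $d(k\exp(\mathsf{w})o, go) < S$. Setting $h := g^{-1}k\exp(\mathsf{w})$, we have $\|\mu(h)\| = d(ho, o) < S$, so $h \in G_S$. Since $k\exp(\mathsf{w}) = gh$ and $\mu(k\exp(\mathsf{w})) = \mathsf{w}$ (because $k \in K$ and $\mathsf{w} \in \fa^+$), Lemma \ref{com} applied to $gh \in g \cdot G_S$ yields $\|\mathsf{w} - \mu(g)\| < dS$. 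Combining with $\|\mu(g) - t_0 \mathsf{u}\| < R$ coming from the hypothesis $g \in G_{\mathsf{u}, R}$ produces $t_0 \geq 0$ with $\|\mathsf{w} - t_0 \mathsf{u}\| < R + dS$.

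The conclusion then follows from the triangle inequality, using that the $A$-orbit $A.o \subset X$ is a flat isometric to $\fa$ and that $k \in K$ acts by isometries:
\[
d(k\exp(t_0 \mathsf{u})o, go) \le \|\mathsf{w} - t_0 \mathsf{u}\| + d(k\exp(\mathsf{w})o, go) < (R + dS) + S \le R + 2dS,
\]
since $d \ge 2$. If $t_0 > 0$ take $t = t_0$; if $t_0 = 0$, by continuity the bound persists for all sufficiently small $t > 0$. The only non-routine ingredient is Lemma \ref{com}, which quantifies how $\mu$ is perturbed when $g$ is multiplied on both sides by bounded elements; the rest is bookkeeping with the triangle inequality.
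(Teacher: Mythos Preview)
Your proof is correct and follows essentially the same route as the paper's: unpack the definition of the shadow to get $k\in K$ and $\mathsf{w}\in\fa^+$ with $k\exp(\mathsf{w})o\in B(go,S)$, apply Lemma~\ref{com} to obtain $\|\mathsf{w}-\mu(g)\|\le dS$, combine with $\|\mu(g)-t_0\mathsf{u}\|<R$, and finish with the triangle inequality using that $Ao$ is a flat. The paper arrives at the identical bound $R+(d+1)S\le R+2dS$; your only addition is the brief continuity remark handling the edge case $t_0=0$ so as to literally match the ``$t>0$'' in the statement.
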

\begin{proof}
For $\xi\in O_S(o,g o)$, there exist $k\in K$ and $a\in A^+$ such that $kao\in B(g o,S)$ and $\xi=k^+$.
It follows that $g^{-1}ka \in G_S$, and hence $\norm{\mu(g)- \log a}\leq dS $ by Lemma \ref{com}.
On the other hand, since $g\in  G_{{\mathsf u},R}$, there exists $t\geq 0$ such that $\norm{\mu(g)-t{\mathsf u}}< R$, and hence
\begin{align*} 
d(k\exp (t{\mathsf u}) o,go)&\leq d(k\exp (t{\mathsf u})o,ka o)+d(kao,go)\\
&<\norm{t{\mathsf u}-\log a}+S\le \|t{\mathsf u}-\mu(g)\|+\|\mu(g)-\log a\| +S\\ &\le  R+(d+1)S.
\end{align*}
This proves the lemma.
\end{proof}

The following Vitali-covering type lemma is a key ingredient of the proof of Proposition \ref{lem.01}.
\begin{lem}[Covering lemma] \label{cover} Fix $R>0$ and
consider $\{O_R(o, \ga o):\ga\in \G'\}$ for some infinite subset $\G'\subset \G_{\u, R}$.
There exists a subset $\G''\subset \G'$ such that 
$\{O_R(o, \ga o):\ga\in \G''\}$ consists of pairwise disjoint shadows and
\begin{equation}\label{eq.pR}
\mathop{\bigcup}_{\ga\in\Ga'}O_{R}(o,\ga o)\subset \mathop{\bigcup}_{\ga\in\Ga''} 
O_{10 dR}(o,\ga o).
\end{equation}
\end{lem}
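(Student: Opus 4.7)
The plan is a standard Vitali-type greedy selection, the key point being the verification of a ``doubling'' property for shadows of elements whose Cartan projections lie in the tube $\G_{\u,R}$.

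First I would enumerate $\G'$ as $\{\ga_1,\ga_2,\ldots\}$ in order of non-decreasing $\|\mu(\ga_n)\|$; such an enumeration exists because $\G$ is discrete, so $\{\ga\in\G:\|\mu(\ga)\|\le C\}$ is finite for each $C$. I then construct $\G''$ greedily: put $\ga_1\in\G''$, and at each subsequent stage append the $\ga_n$ of smallest available index whose shadow $O_R(o,\ga_n o)$ is disjoint from every shadow already selected. By construction the family $\{O_R(o,\ga o):\ga\in\G''\}$ is pairwise disjoint, and every rejected $\ga\in\G'\setminus\G''$ meets some earlier $O_R(o,\ga' o)$ with $\ga'\in\G''$ and $\|\mu(\ga')\|\le\|\mu(\ga)\|$.

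With this set-up the covering assertion \eqref{eq.pR} reduces to the geometric claim that whenever $\ga,\ga'\in\G_{\u,R}$ satisfy $\|\mu(\ga')\|\le\|\mu(\ga)\|$ and $O_R(o,\ga o)\cap O_R(o,\ga' o)\ne\emptyset$, one has $O_R(o,\ga o)\subset O_{10dR}(o,\ga' o)$. To prove this, I pick a common point $\xi\in O_R(o,\ga o)\cap O_R(o,\ga' o)$; using $(km)^+=k^+$ for $m\in M$ together with the fact that $M$ centralizes $A$, I arrange a single $k\in K$ and elements $a,a'\in A^+$ with $d(kao,\ga o)<R$ and $d(ka'o,\ga' o)<R$. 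Lemma~\ref{com} then gives $\|\log a-\mu(\ga)\|\le dR$ and $\|\log a'-\mu(\ga')\|\le dR$, and the $\u$-tube hypothesis places both $\log a$ and $\log a'$ within $(d+1)R$ of the ray $\br_+\u$.

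Given any $\eta\in O_R(o,\ga o)$, say $\eta=k_\eta^+$ with $d(k_\eta a_\eta o,\ga o)<R$, my candidate approximator for $\ga' o$ is $k_\eta a' o$. Setting $w:=k_\eta^{-1}k\in K$ and combining $d(k_\eta a_\eta o,kao)<2R$ with $\|\log a_\eta-\log a\|\le 2dR$ yields $d(ao,wao)=O(R)$. The essential observation now is that, since $X$ is CAT$(0)$ and $wo=o$, the displacement function $x\mapsto d(x,wx)$ is convex and vanishes at $o$, hence non-decreasing along the geodesic ray $s\mapsto\exp(s\u)o$. Because $\log a'$ lies no farther along this ray than $\log a$ (up to $(d+1)R$ error coming from the $\u$-tube and the ordering $\|\mu(\ga')\|\le\|\mu(\ga)\|$), this monotonicity transfers the bound from $ao$ to $a'o$: $d(a'o,wa'o)\le d(ao,wao)+O(R)=O(R)$, and hence $d(k_\eta a'o,\ga'o)<10dR$ after a careful accounting of constants. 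This last step, using convexity along the Weyl flat plus the $\u$-tube hypothesis to propagate the displacement bound, is where the argument is non-trivial and the factor $10d$ appears; without the tube condition no uniform constant would suffice.
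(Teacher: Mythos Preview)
Your argument is correct and follows the same strategy as the paper: greedy Vitali selection ordered by $\|\mu(\gamma)\|$, reduction to the doubling claim for two intersecting shadows along the $\u$-tube, and a CAT(0) argument exploiting that all relevant Cartan projections lie near the single ray $\br_+\u$. The only substantive difference is the CAT(0) tool invoked. The paper isolates a separate comparison-triangle lemma (Lemma~\ref{lem.mon}): for two $\u$-rays $s\mapsto k_i\exp(s\u)o$ from $o$ and any $0\le s_1\le t_1$, there exists $0\le s_2\le t_2$ with $d(k_1\exp(s_1\u)o,k_2\exp(s_2\u)o)\le d(k_1\exp(t_1\u)o,k_2\exp(t_2\u)o)$; this is applied with $(k_1,k_2)=(k_\eta,k)$ to produce a point $k_\eta\exp(s'\u)o$ close to $\gamma'o$. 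Your displacement-convexity argument is a clean alternative that avoids comparison triangles, though two small points deserve care: monotonicity alone does not cover the possible overshoot $t'>t$ (which can occur up to $O(R)$), so you also need that $x\mapsto d(x,wx)$ is $2$-Lipschitz; and a precise count along your route gives roughly $(6d+11)R$ rather than $10dR$, which is harmless since the constant $d\ge 2$ of Lemma~\ref{com} may always be enlarged.
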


\begin{proof}
Enumerate $\Ga'=\{\ga_i:i\in\bb N\}$ so that $\norm{\mu(\ga_i)}$ is nondecreasing.
Set $i_0=0$ and inductively define
$$
i_{n+1}:=\min\{i>i_n: O_{R}(o,\ga_i o)\bigcap \left(\cup_{j\leq n}O_{R}(o,\ga_{i_j}o)\right) =\emptyset\}.
$$
Set $\Ga'':=\{\ga_{i_n}:n\in\bb N\}$ so that $\{O_{R}(o,\ga o):\ga\in\Ga''\}$ consists of pairwise disjoint shadows.

For each $\ga_i\in \G'$, we claim that
$O_{R}(o,\ga_i o)\subset O_{10 dR}(o, \ga o)$ for some $\ga\in\Ga''$.
We may assume that $i_n<i<i_{n+1}$ for some $n$.
By definition of $i_{n+1}$, there exists $j\leq n$ such that $O_{R}(o,\ga_i o)\cap O_{R}(o,\ga_{i_j}o)\neq\emptyset$.
In particular, there exists $k_1\in K$, $a_i, a_{i_j}\in A^+$  such that $k_1a_io\in B(\ga_io,R)$ and $k_1a_{i_j}o\in B(\ga_{i_j}o,R)$.
Since $\ga_i^{-1}k_1 a_i, \ga_{i_j}^{-1}k_1a_{i_j}\in G_R$, we have 
$$
\norm{\mu(\ga_i)-\log a_i}\leq dR\text{ and }\norm{\mu(\ga_{i_j})-\log a_{i_j}}\leq dR
$$
by Lemma \ref{com}.
On the other hand, there exists $t_i, t_{i_j}\geq 0$ such that 
$$
\norm{\mu(\ga_i)-t_i\mathsf{u}}\leq R\text{ and }\norm{\mu(\ga_{i_j})-t_{i_j}\mathsf{u}}\leq R,
$$
as $\ga_i,\ga_{i_j}\in\Ga_{{\mathsf u},R}$.
Observe that
\begin{align*}
&\norm{\mu(\ga_i)}=d(o,\ga_io)\leq d(o,k_1a_{t_i}o)+d(k_1a_{t_i}o,k_1a_{i}o)+d(k_1a_{i}o,\ga_io)\\
&\leq  d(o,k_1a_{t_i}o)+dR+2R
=d(o,k_1a_{t_{i_j}}o)+dR+2R+(t_i-t_{i_j})\\
&\leq d(o,k_1a_{i_j}o)+2dR+3R+(t_i-t_{i_j})\leq d(o,\ga_{i_j}o)+2dR+4R+(t_i-t_{i_j})\\
&=\norm{\mu(\ga_{i_j})}+2dR+4R+(t_i-t_{i_j})\leq \norm{\mu(\ga_{i})}+2dR+4R+(t_i-t_{i_j}),
\end{align*}
and hence $t_i':=t_i+2dR+4R\ge t_{i_j}$.

Now let $k_2^+\in O_{R}(o,\ga_io)$ be arbitrary and $b\in A^+$ be such that $k_2bo\in B(\ga_io,R)$.
We have $\norm{\mu(\ga_i)-\log b}\leq dR$ by Lemma \ref{com}.
Since $\ga_i\in\Ga_{{\mathsf u},R}$, there exists $s\geq 0$ such that $\norm{\mu(\ga_i)-s\mathsf u}\leq R$.
Since 
\begin{align*}
&d(k_2a_so,k_1a_{t_i'}o)\leq d(k_2a_so, k_2bo)\\ &+d(k_2bo,\ga_i o)+d(\ga_i o,k_1a_{i}o)+d(k_1a_{i}o,k_1a_{t_i}o)+d(k_1a_{t_i}o,k_1a_{t_i'}o)\\
&\leq (dR+R)+R+R+(dR+R)+(2dR+4R)=4dR+8R,    
\end{align*}
there exists $0\leq s'\leq s$ such that $d(k_2a_{s'}o,k_1a_{t_{i_j}}o)\leq 4dR+8R$ by Lemma \ref{lem.mon} below.
Finally,
\begin{align*}
d(k_2a_{s'}o,\ga_{i_j}o)&< d(k_2a_{s'}o,k_1a_{t_{i_j}}o)+d(k_1a_{t_{i_j}}o,k_1a_{i_j}o)+d(k_1a_{i_j}o,\ga_{i_j}o)\\
&\leq (4dR+8R)+(dR+R)+R=5dR+10R,    
\end{align*}
which implies that $k_2^+\in O_{5dR+10R}(o,\ga_{i_j}o)\subset O_{10 dR}(o, \ga_{i_j} o)$, since $d\ge 2$. This finishes the proof.
\end{proof}

\begin{lem}\label{lem.mon}
Let $k_1,k_2\in K$, $t_1,t_2\geq 0$ be arbitrary.
For any $0\leq s_1\leq t_1$, there exists $0\leq s_2\leq t_2$ such that 
$$
d(k_1\exp(s_1\u)o,k_2\exp(s_2\u)o)\leq d(k_1\exp(t_1\u)o,k_2\exp(t_2\u)o).
$$
\end{lem}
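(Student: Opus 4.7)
The plan is to reduce the claim to a one-line convexity estimate, using that $X = G/K$ is a CAT$(0)$ space and that $k_1, k_2 \in K$ both stabilize the basepoint $o$.

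Define $\phi(s_1, s_2) := d(k_1 \exp(s_1 \u) o, k_2 \exp(s_2 \u) o)$ on $[0, \infty)^2$. My first step would be to show that $\phi$ is convex. For any $(a_1, a_2), (b_1, b_2) \in [0,\infty)^2$, the two curves
$$\alpha(t) := k_1 \exp\bigl((a_1 + t(b_1 - a_1))\u\bigr) o, \qquad \beta(t) := k_2 \exp\bigl((a_2 + t(b_2 - a_2))\u\bigr) o,$$
are constant-speed geodesics in $X$ on $[0, 1]$, since $k_1, k_2$ act isometrically on $X$ and $s \mapsto \exp(s\u) o$ is a geodesic ray. As $X$ is a simply connected Riemannian manifold of non-positive sectional curvature, hence CAT$(0)$, the distance function between two such geodesics is convex in the parameter, giving
$$\phi\bigl((1-t)(a_1,a_2) + t(b_1,b_2)\bigr) \,\leq\, (1-t)\,\phi(a_1,a_2) + t\,\phi(b_1,b_2).$$

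The second and crucial observation I would use is that $\phi(0, 0) = d(k_1 o, k_2 o) = 0$, since $k_1, k_2 \in K$ both fix $o$.

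Given these two inputs, the conclusion is immediate. If $t_1 = 0$, then $s_1 = 0$ and taking $s_2 = 0$ gives $\phi(s_1, s_2) = 0 \leq \phi(t_1, t_2)$. Otherwise, set $s_2 := (s_1/t_1)\, t_2 \in [0, t_2]$ and $\lambda := s_1/t_1 \in [0, 1]$, so that $(s_1, s_2) = \lambda(t_1, t_2) + (1-\lambda)(0,0)$. Convexity of $\phi$ then yields
$$\phi(s_1, s_2) \,\leq\, \lambda\,\phi(t_1, t_2) + (1-\lambda)\,\phi(0,0) \,=\, \lambda\,\phi(t_1, t_2) \,\leq\, \phi(t_1, t_2),$$
as required. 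I do not expect any serious obstacle: the entire argument rests on the standard CAT$(0)$ convexity of the distance function applied to two geodesics, together with the elementary observation that $\phi$ vanishes at the origin because $k_1 o = k_2 o = o$.
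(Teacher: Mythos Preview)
Your proof is correct and uses essentially the same underlying idea as the paper: the CAT(0) geometry of $X=G/K$ together with the fact that the two geodesic rays $s\mapsto k_i\exp(s\u)o$ emanate from the common point $o$. The difference is only in packaging. The paper works with the comparison triangle $\triangle(o,\,k_1\exp(t_1\u)o,\,k_2\exp(t_2\u)o)$ in Euclidean space, performs the scaling argument there (the ``straightforward computation in Euclidean geometry'' is exactly your choice $s_2=(s_1/t_1)t_2$, which in the Euclidean comparison triangle gives a similar sub-triangle), and then invokes the CAT(0) comparison inequality to transfer back. You instead invoke directly the standard consequence of CAT(0) that the distance between two constant-speed geodesics is convex in the parameter, which absorbs both the comparison and the Euclidean computation into one line. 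Your formulation is a bit cleaner and yields the slightly stronger inequality $\phi(s_1,s_2)\le (s_1/t_1)\phi(t_1,t_2)$; the paper's route has the advantage of staying closer to the definition of CAT(0) for readers less familiar with the convexity corollary.
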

\begin{proof}
This follows from the CAT(0) property of $G/K$ (cf. \cite{Eb}).
Consider the geodesic triangle $\triangle( pqr )$ in $G/K$ with vertices $p=o$, $q=k_1\exp(t_1\u)o$ and $r=k_1\exp(t_2\u)o$.
Let $\triangle( p'q'r')$ be the triangle in the Euclidean space which has the same corresponding side length to $\triangle pqr$.
Let $0\leq s_2\leq t_2$ be arbitrary and $r_1'$ be a point on the side $p'r'$ such that the segment $p'r_1'$ has length $\ell(p'r_1')=s_2$.
By a straightforward computation in Euclidean geometry, we can find a point $q_1'$ on the side $p'q'$ such that 
$$
\ell(q_1'r_1')\leq \ell(q'r')=\ell(qr)=d(k_1\exp(t_1\u)o,k_2\exp(t_2\u)o).
$$
Set $s_1:=\ell(p'q_1')$.
Since $G/K$ is a CAT(0) space, we get
$$
d(k_1\exp(s_1\u)o,k_2\exp(s_2\u)o)\leq \ell(q_1'r_1'),
$$
from which the lemma follows.
\end{proof}

We may write
$\La_{\mathsf u}=\mathop{\cup}_{R>0}\La_{\u,R}$ where 
\be\label{lau} 
\La_{\u,R}:=\bigcap_{m\geq1}\bigcup_{
\substack{
\ga\in \Ga_{{\mathsf u},R},\\
\|\mu(\ga)\|\geq m
}
}
O_{R}(o,\ga o),  \quad \text{ where $\Ga_{{\mathsf u}, R}:=\G\cap G_{{\mathsf u}, R}$.}
\ee

\begin{lem}\label{prop.LD}
If $R>1$ is large enough, 
for any $f\in L^1(\nu_\psi)$ and for $\nu_\psi$-a.e. $\xi\in\La_{\u,R}$, we have
$$
\lim_{i\to \infty} \frac{1}{\nu_\psi(O_{R}(o,\ga_i o))}\int_{O_{R}(o,\ga_i o)}f\,d\nu_\psi= f(\xi)
$$
 for any sequence $\ga_i\to\infty$ in $\Ga_{{\mathsf u},R}$ such that $\xi\in O_{R}(o,\ga_i o)$.
\end{lem}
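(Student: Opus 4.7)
The plan is to adapt the classical Lebesgue differentiation argument to the family of shadows $\{O_R(o,\ga o):\ga\in \G_{\u,R}\}$, using the Covering Lemma~\ref{cover} as a Vitali substitute and the Shadow Lemma~\ref{lem.shadow} as a doubling substitute. First I would establish a weak-type $(1,1)$ maximal inequality for the shadow-averaging operator, then verify the differentiation identity on continuous functions, and finally conclude via a standard $L^1$-approximation.

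\textbf{Step 1: Doubling and weak-type inequality.} Provided $R>S_0$, Lemma~\ref{lem.shadow} directly yields a constant $C_0=c_1^{-1}e^{\ka\|\psi\|(10d+1)R}$ with $\nu_\psi(O_{10dR}(o,\ga o))\le C_0\,\nu_\psi(O_R(o,\ga o))$ for every $\ga\in\G$. Define the maximal operator
$$
Mf(\xi):=\sup \Big\{\tfrac{1}{\nu_\psi(O_R(o,\ga o))}\int_{O_R(o,\ga o)}|f|\,d\nu_\psi : \ga\in \G_{\u,R},\ \xi\in O_R(o,\ga o)\Big\}.
$$
For $\lambda>0$, the level set $E_\lambda:=\{\xi\in\La_{\u,R}: Mf(\xi)>\lambda\}$ is covered by shadows $\{O_R(o,\ga o):\ga\in\G'\}$ with $\G'\subset\G_{\u,R}$, each satisfying $\int_{O_R(o,\ga o)}|f|\,d\nu_\psi>\lambda\,\nu_\psi(O_R(o,\ga o))$. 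Applying Lemma~\ref{cover} to $\G'$ gives a disjoint subfamily $\{O_R(o,\ga o):\ga\in\G''\}$ whose $10dR$-enlargement still covers $E_\lambda$, whence
$$
\nu_\psi(E_\lambda)\le C_0\sum_{\ga\in\G''}\nu_\psi(O_R(o,\ga o))\le \frac{C_0}{\lambda}\sum_{\ga\in\G''}\int_{O_R(o,\ga o)}|f|\,d\nu_\psi\le \frac{C_0\,\|f\|_{L^1(\nu_\psi)}}{\lambda},
$$
the last inequality by disjointness.

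\textbf{Step 2: Continuous functions and $L^1$-approximation.} For $g\in C(\cal F)$, $\xi\in\La_{\u,R}$, and any admissible sequence $\ga_i\to\infty$ in $\G_{\u,R}$ with $\xi\in O_R(o,\ga_i o)$, the condition $\ga_i\in\G_{\u,R}$ forces $\mu(\ga_i)$ to escape along the $\u$-direction, so $\ga_i\to\infty$ regularly (as $\u\in\inte\fa^+$). A routine estimate based on Lemma~\ref{lem.shadow2} and the Cartan decomposition then shows that the diameter of $O_R(o,\ga_i o)$ in $\cal F$ tends to $0$; since every shadow contains $\xi$, they shrink to $\{\xi\}$ and uniform continuity of $g$ yields the desired convergence. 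For general $f\in L^1(\nu_\psi)$ and $\epsilon>0$, pick $g\in C(\cal F)$ with $\|f-g\|_{L^1}<\epsilon^2$ and set $h=f-g$. Writing $D^*f(\xi)$ for the limsup appearing in the statement of the lemma, the continuous case gives $D^*g\equiv 0$ on $\La_{\u,R}$, hence $D^*f\le Mh+|h|$. The weak-type bound from Step~1 together with Chebyshev yields $\nu_\psi(\{D^*f>\epsilon\})\le (C_0+1)\epsilon$, and letting $\epsilon\to 0$ forces $D^*f=0$ $\nu_\psi$-a.e.~on $\La_{\u,R}$, as claimed.

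\textbf{Main obstacle.} The only nontrivial geometric input is the diameter-shrinking claim for shadows $O_R(o,\ga_i o)$ when $\ga_i\to\infty$ regularly: in rank one this is immediate from Gromov hyperbolicity, but in higher rank one must combine the regularity of $\ga_i$ with the Cartan-projection control on $\ka_1(\ga_i)$ and Lemma~\ref{lem.shadow2}. Everything else reduces to the textbook Vitali-covering scheme, requiring only the doubling comparison $\nu_\psi(O_{10dR}(o,\ga o))\asymp \nu_\psi(O_R(o,\ga o))$ provided by the Shadow Lemma.
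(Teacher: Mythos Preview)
Your proposal is correct and follows essentially the same route as the paper: prove a weak-type $(1,1)$ inequality for the shadow maximal operator via the Covering Lemma~\ref{cover} together with the doubling comparison coming from the Shadow Lemma~\ref{lem.shadow}, verify the statement for continuous functions, and finish by $L^1$-approximation. The paper simply asserts that the continuous case ``holds trivially'' for $f\in C(\La)$; your extra sketch that $\op{diam} O_R(o,\ga_i o)\to 0$ when $\ga_i\to\infty$ regularly is correct (it follows most cleanly from the fact that any $k^+\in O_R(o,\ga_i o)$ satisfies $d(k a_i o,\ga_i o)<R$ with $a_i\to\infty$ regularly, forcing $k^+$ to share accumulation points with $\kappa_1(\ga_i)$, rather than from Lemma~\ref{lem.shadow2} per se), and this fills in what the paper leaves implicit.
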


We define a maximal operator $M_R$ on $L^1(\nu_\psi)$ as follows: for all $f\in L^1(\nu_\psi)$ and all $\xi\in\La_{\u,R}$, set
$$
M_Rf(\xi):=\limsup_{\substack{\ga\in \Ga_{{\mathsf u}, R}, 
\|\mu(\ga)\|\to\infty,\\
 \xi\in O_{R}(o,\ga o)}
} \frac{1}{\nu_\psi(O_{R}(o,\ga o))}\int_{O_{R}(o,\ga o)}f\,d\nu_\psi;$$
this is well-defined by the definition of $\La_{\u, R}$.

Note that Lemma \ref{prop.LD} holds trivially for $f\in C(\La)$.
Once the weak type inequality for the maximal functions is established as in Lemma \ref{lem.MI}, Lemma \ref{prop.LD}  follows from a standard argument using the density of $C(\La)$ in $L^1(\nu_\psi)$.
\begin{lem}\label{lem.MI}
If $R>1$ is large enough, then $M_R$ is of weak type $(1,1)$; for all $f\in L^1(\nu_\psi)$ and $\lambda>0$, we have
$$
\nu_\psi(\{\xi\in\La_{\mathsf u, R}:|M_Rf(\xi)|>\lambda\})\ll\frac{1}{\lambda}\norm{f}_{L^1(\nu_\psi)}
$$ where the implied constant is independent of $f$.
\end{lem}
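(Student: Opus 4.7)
The plan is to run a classical Hardy--Littlewood weak-type argument, where the Vitali-style covering lemma (Lemma \ref{cover}) replaces the usual Euclidean Vitali lemma and the shadow lemma (Lemma \ref{lem.shadow}) supplies the bounded-distortion estimate between shadows of radii $R$ and $10dR$. Assume throughout that $R>S_0$, so that the shadow lemma applies to both $O_R(o,\ga o)$ and $O_{10dR}(o,\ga o)$.

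Fix $f\in L^1(\nu_\psi)$, $\lambda>0$, and set $E_\lambda:=\{\xi\in \La_{\u,R}: |M_R f(\xi)|>\lambda\}$. For every $\xi\in E_\lambda$, the definition of $M_R$ together with $|\int f| \le \int |f|$ yields some $\ga_\xi\in \Ga_{\u,R}$ with $\xi\in O_R(o,\ga_\xi o)$ and
$$\nu_\psi(O_R(o,\ga_\xi o))<\frac{1}{\lambda}\int_{O_R(o,\ga_\xi o)} |f|\,d\nu_\psi.$$
Hence the family $\{O_R(o,\ga o):\ga\in \Ga':=\{\ga_\xi:\xi\in E_\lambda\}\}\subset \Ga_{\u,R}$ covers $E_\lambda$. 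Apply Lemma \ref{cover} (or the same greedy selection directly if $\Ga'$ happens to be finite) to extract a subset $\Ga''\subset \Ga'$ such that the shadows $\{O_R(o,\ga o)\}_{\ga\in \Ga''}$ are pairwise disjoint and
$$E_\lambda\subset \bigcup_{\ga\in \Ga''} O_{10dR}(o,\ga o).$$

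The shadow lemma pinches both $\nu_\psi(O_R(o,\ga o))$ and $\nu_\psi(O_{10dR}(o,\ga o))$ between constant multiples of $e^{-\psi(\mu(\ga))}$, which yields a constant $C=C(R,d,\kappa,\|\psi\|,c_1)>0$ such that
$$\nu_\psi(O_{10dR}(o,\ga o))\le C\,\nu_\psi(O_R(o,\ga o))\qquad \text{for every }\ga\in \Ga.$$
Combining the previous two displays with disjointness,
\begin{align*}
\nu_\psi(E_\lambda) &\le \sum_{\ga\in \Ga''}\nu_\psi(O_{10dR}(o,\ga o)) \le C\sum_{\ga\in \Ga''}\nu_\psi(O_R(o,\ga o))\\
&< \frac{C}{\lambda}\sum_{\ga\in \Ga''}\int_{O_R(o,\ga o)}|f|\,d\nu_\psi \le \frac{C}{\lambda}\|f\|_{L^1(\nu_\psi)},
\end{align*}
which is the desired weak-type $(1,1)$ inequality. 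The entire argument is routine once Lemmas \ref{cover} and \ref{lem.shadow} are available; the only mildly delicate point is obtaining the uniform-in-$\ga$ comparison between the $\nu_\psi$-masses of the $R$- and $10dR$-shadows, which is exactly the content of the two-sided shadow lemma, so there is no genuine obstacle.
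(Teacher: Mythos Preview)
Your argument is correct and follows essentially the same route as the paper: extract from the superlevel set a family of shadows on which the average exceeds $\lambda$, apply the Vitali-type covering lemma (Lemma~\ref{cover}) to pass to a disjoint subfamily at the cost of enlarging the radius to $10dR$, and then use the two-sided shadow lemma (Lemma~\ref{lem.shadow}) to compare the $\nu_\psi$-masses at the two radii. Your handling of the absolute value via $|\int f|\le \int |f|$ and your remark on the finite-$\Ga'$ case are in fact slightly more careful than the paper's own write-up.
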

\begin{proof} Let $R>1$ be large enough to satisfy Lemma \ref{lem.shadow}.
Let $\lambda>0$ be arbitrary.
By definition of $M_R$, there exists an infinite subset $\Ga'\subset\Ga_{{\mathsf u},R}$ such that
\begin{align*}
\{\xi\in\La_{\mathsf u, R} :|M_Rf(\xi)|>\lambda\}\subset \bigcup_{\ga\in\Ga'} O_{R}(o,\ga o),\text{ and}\\
\frac{1}{\nu_\psi(O_{R}(o,\ga o))}\int_{O_{R}(o,\ga o)}f\,d\nu_\psi>\lambda \text{ for all }\ga\in \Ga'.
\end{align*}
By Lemma \ref{cover},
there exists $\G''\subset \G'$ so that $\{O_{R}(o,\ga o):\ga\in\Ga''\}$ consists of pairwise disjoint shadows and 
\begin{equation}\label{eq.5R}
\mathop{\bigcup}_{\ga\in\Ga'}O_{R}(o,\ga o)\subset \mathop{\bigcup}_{\ga\in\Ga''} 
O_{10 dR}(o,\ga o).
\end{equation}
Hence, by Lemma \ref{lem.shadow},
\begin{align*}
&\nu_\psi(\{\xi\in\La_{\mathsf u, R}:|M_Rf(\xi)|>\lambda\})\leq \nu_\psi(\mathop{\bigcup}_{\ga\in\Ga'} O_{R}(o,\ga o))\\
&
\leq \nu_\psi(\mathop{\bigcup}_{\ga\in\Ga''} O_{10 dR}(o,\ga o)) \le  \sum_{\ga\in \G''}\nu_\psi(O_{10 dR}(o,\ga o))
\\ &\asymp \sum_{\ga\in \G''}\nu_\psi(O_{R}(o,\ga o)) \leq\frac{1}{\lambda}\int_{\mathop{\cup}_{\ga\in\Ga''} O_{R}(o,\ga o)}f\, d\nu_\psi\leq \frac{1}{\lambda}\norm{f}_{L^1(\nu_\psi)}.
\end{align*}
\end{proof}

\subsection*{Proof of Proposition \ref{lem.01}} Let $R>1$ be large enough to satisfy Lemma \ref{lem.MI}. Suppose that $\nu_\psi(\La_{\mathsf u})>0$.
Then for all sufficiently large $R>1$, we have $\nu_\psi(\La_{\u,R})>0$.
By applying Lemma \ref{prop.LD} with $f=\mathbbm{1}_{\La_{\mathsf u}^c}$, there exists $\xi\in \La_{\u,R}$, we obtain a sequence $\ga_i\in\Ga$ such that $\xi\in O_{R}(o,\ga_i o)$ and 
$$
\lim_{i\to \infty} \frac{\nu_\psi(O_{R}(o,\ga_i o)\cap\La_{\mathsf u}^c)}{\nu_\psi(O_{R}(o,\ga_i o))} = 0.
$$
Since $\nu_\psi(O_{R}(o,\ga_i o))\asymp e^{-\psi(\mu(\ga_i))}$ by Lemma \ref{lem.shadow},
\be\label{zzz}
\lim_{i\to \infty} e^{\psi(\mu(\ga_i))} {\nu_\psi(O_{R}(o,\ga_i o)\cap\La_{\mathsf u}^c)} = 0.
\ee
By Lemma \ref{lem.shadow2},
\begin{align*}
\nu_\psi(O_{R}(o,\ga_i o)\cap\La_{\mathsf u}^c)&=\int \mathbbm{1}_{O_{R}(o,\ga_i o)\cap\La_{\mathsf u}^c}(\xi)\,d\nu_\psi(\xi)\\
&=\int \mathbbm{1}_{O_{R}(\ga_i^{-1}o, o)\cap\La_{\mathsf u}^c}(\xi)e^{\psi(\beta_\xi(e,\ga_i^{-1}))}\,d\nu_\psi(\xi)\\
&\asymp e^{-\psi(\mu(\ga_i))}\nu_\psi(O_{R}(\ga_i^{-1}o, o)\cap\La_{\mathsf u}^c).
\end{align*}

Hence as $i\to \infty$,
$$\nu_\psi(O_{R}(\ga_i^{-1}o, o)\cap\La_{\mathsf u}^c)\asymp e^{\psi(\mu(\ga_i))}
\nu_\psi(O_{R}(o,\ga_i o)\cap\La_{\mathsf u}^c)\to 0.$$

Passing to a subsequence, we may assume that $\ga_i^{-1}o$ converges to
some $\eta_0\in\La$.
By \cite[Lem. 5.6]{LO}, for all sufficiently large $i$,
$$\nu_\psi(O_{R/2}(\eta_0,o)\cap\La_{\mathsf u}^c)\leq\nu_\psi(O_R(\ga_i^{-1}o,o)\cap\La_{\mathsf u}^c).$$  Therefore
$$\nu_\psi(O_{R/2}(\eta_0,o)\cap\La_{\mathsf u}^c)= 0.$$
Since $R>1$ is an arbitrary large number, varying $R$, we get from \eqref{oo} that
\begin{equation}\label{eq.cell}
\nu_\psi(\La_{\mathsf u}^c\cap \{\eta\in\cal F:(\eta,\eta_0)\in\cal F^{(2)}\})=0.
\end{equation}

We now claim that for any $\eta\in\La_{\mathsf u}^c$, there exists a neighborhood $U_\eta$ of $\eta$ such that $\nu_\psi(\La_{\mathsf u}^c\cap U_\eta)=0$.
If $(\eta,\eta_0)\in\cal F^{(2)}$, this is immediate from \eqref{eq.cell}.
Otherwise, by the Zariski density of $\Ga$ and the fact that $\La$ is the unique $\Ga$-minimal subset of $\F$, we can find $\ga\in\Ga$ such that $(\ga\eta,\eta_0)\in\cal F^{(2)}$.
The claim follows again from \eqref{eq.cell}, since $\nu_\psi$ is $\Ga$-quasi-invariant.
This finishes the proof.$\qed$

\section{Hopf's argument for higher rank cases}\label{sec.TG}
Let $\Ga<G$ be a Zariski dense discrete subgroup. 
We fix $\psi\in \fa^*$ and a pair $(\nu_\psi, \nu_{\psi \circ \i})$ of $(\Gamma, \psi)$ and $(\Gamma, \psi\circ \i)$-conformal measures on $\F$ respectively.

\begin{Def}[Hopf parametrization of $G/M$] \rm
The map
$$gM\mapsto (g^+, g^-, b=\beta_{g^+}(e, g))$$
gives a homeomorphism between $G/M$ and $\cal F^{(2)}\times \frak a$.
\end{Def}

 \subsection*{Bowen-Margulis-Sullivan measures.} 
Define the following $A$-invariant Radon measure $\tilde{\m}=\tilde{\mathsf m} (\nu_{\psi}, \nu_{\psi\circ \i})$ on $G/M$ 
  as follows: for $g=(g^+, g^-, b)\in \cal F^{(2)}\times \frak a$,
\begin{equation*} \label{eq.BMS1}
d \tilde{\m} (g)= e^{\psi(\cal G(g^+,g^-))}\;  d\nu_{\psi} (g^+) d\nu_{\psi\circ \i}(g^-) db
\end{equation*}
  where $db $ is the Lebesgue measure on $\frak a$.
\noindent
We note that this is a non-zero measure; otherwise,
$\nu_\psi$ is supported on a proper Zariski subvariety of $\cal F$ by Fubini's theorem, but since $\G$ is Zariski dense and $\nu_\psi$ is $\Ga$-conformal, that is not possible. 
The measure $\tilde{\m}$ is left $\Gamma$-invariant, and hence induces a measure on $\Gamma\ba G/M$, which we denote by $\m$.

We fix $\mathsf{u}\in \inte \fa^+$ and set for all $t\in \br$,
$$a_t:=\exp t{\mathsf u}.$$

Recall the following definitions:
\begin{enumerate}
\item
A Borel subset $B\subset \Ga\ba G/M$ is called a \textit{wandering set} for $\m$ if for $\m$-a.e. $x\in B$, we have $\int_{-\infty}^\infty \mathbbm{1}_B(xa_t )\,dt<\infty$.
\item
We say that $(\Ga\ba G/M, \m,\{a_t\})$ is \textit{conservative} if there is no wandering set $B\subset \Ga\ba G/M$ with $\m(B)>0$.
\item
We say that $(\Ga\ba G/M, \m,\{a_t\})$ is \textit{completely dissipative} if  $\Ga\ba G/M$ is a countable union of wandering sets modulo $\m$.
\end{enumerate}

\begin{prop}\label{lem.dich}
The flow
 $(\Ga\ba G/M, \m,\{a_t=\exp (t\mathsf u)\})$ is conservative (resp. completely dissipative) if and only if
 $\max (\nu_\psi(\La_{\mathsf u}), \nu_{\psi\circ \i} (\La_{\i({\mathsf u})})) >0$ (resp.
 $\nu_\psi(\La_{\mathsf u})=0= \nu_{\psi\circ \i} (\La_{\i({\mathsf u})})$). 
\end{prop}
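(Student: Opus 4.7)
The plan is to exploit the Hopf parametrization $G/M\simeq\F^{(2)}\times\fa$, in which the $\{a_t\}$-flow is simply the translation $(g^+,g^-,b)\mapsto(g^+,g^-,b+t\u)$, to reduce conservativity of the flow to directional accumulation of $\Ga$-orbits on $\F$; the zero-one law of Proposition \ref{lem.01} together with the Hopf decomposition of a $\sigma$-finite measure-preserving flow then finish the job.

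First, I would establish the geometric correspondence: the forward $\{a_t\}$-orbit of $[g]\in\Ga\ba G/M$ has an accumulation point if and only if $g^+\in\La_\u$, and symmetrically the backward orbit accumulates if and only if $g^-\in\La_{\i(\u)}$. One direction is essentially the definition of $\La_\u$: if $g^+\in\La_\u$, there exist $\ga_i\in\Ga$ and $t_i\to+\infty$ with $\ga_i g a_{t_i}$ convergent in $G$, which projects to an accumulation of $[ga_{t_i}]$ in $\Ga\ba G/M$. Conversely, if $[ga_{t_i}]$ converges in $\Ga\ba G/M$ with $t_i\to+\infty$, the compactness of $M$ gives $\ga_i\in\Ga$ and $m_i\in M$ with $\ga_i g a_{t_i} m_i$ convergent in $G$; extracting a convergent subsequence $m_i\to m$ produces convergence of $\ga_i g a_{t_i}$ itself, witnessing $g^+\in\La_\u$. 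The statement for backward orbits reduces to the forward case via the involution $[g]\mapsto[gw_0]$ of $\Ga\ba G/M$ (well-defined since $w_0$ normalizes $M$), which swaps $g^+$ with $g^-$ and conjugates $\{a_t\}$ to $\{\exp(-t\i(\u))\}$.

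Next I would transfer this to an $\m$-measure statement. Since $\tilde{\m}$ in Hopf coordinates is locally equivalent to $d\nu_\psi(g^+)\otimes d\nu_{\psi\circ\i}(g^-)\otimes db$, Fubini gives that $\{[g]:g^+\in\La_\u\}$ is $\m$-conull when $\nu_\psi(\La_\u)=1$ and $\m$-null when $\nu_\psi(\La_\u)=0$, and by Proposition \ref{lem.01} these are the only two possibilities; the analogous statement holds for $\{[g]:g^-\in\La_{\i(\u)}\}$ and $\nu_{\psi\circ\i}$. The Hopf decomposition of the $\sigma$-finite measure-preserving flow $(\Ga\ba G/M,\{a_t\},\m)$ identifies the conservative part with the set of points whose orbit has a limit point in $\Ga\ba G/M$, and this identification is invariant under time reversal since a set is wandering for the flow iff it is wandering for the reversed flow. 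Combining the three steps: $\m$ is conservative iff $\nu_\psi(\La_\u)>0$ iff $\nu_{\psi\circ\i}(\La_{\i(\u)})>0$, while $\m$ is completely dissipative iff both measures vanish on their respective directional conical limit sets.

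The step I expect to be most delicate is the geometric correspondence above -- specifically, ruling out that the convergence of $[ga_{t_i}]$ in $\Ga\ba G/M$ arises from a bounded sequence of witnesses $\ga_i$, in which case no directional escape of $\mu(\ga_i)$ would be observed. This is handled by the properness of the $A$-action on $G$: if infinitely many $\ga_i$ coincide, the sequence $\ga_i g a_{t_i}$ cannot converge when $t_i\to+\infty$, so the $\ga_i$ must escape to infinity; Lemma \ref{lem.F2} then ensures that their Cartan projections escape regularly in $\fa^+$, so the limiting pair $(\ga_i g^+,\ga_i g^-)$ lies in $\F^{(2)}$ and the Hopf coordinates converge consistently.
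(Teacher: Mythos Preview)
Your strategy matches the paper's: use the Hopf parametrization to translate conservativity into orbit accumulation, then apply Proposition~\ref{lem.01}. However, your stated conclusion overshoots. You write ``$\m$ is conservative iff $\nu_\psi(\La_\u)>0$ iff $\nu_{\psi\circ\i}(\La_{\i(\u)})>0$'', but time-reversal invariance of the Hopf decomposition only tells you that the conservative part coincides with the set of points whose \emph{two-sided} orbit accumulates, namely $\{[g]:g^+\in\La_\u\}\cup\{[g]:g^-\in\La_{\i(\u)}\}$. Full measure of this union forces $\nu_\psi(\La_\u^c)\cdot\nu_{\psi\circ\i}(\La_{\i(\u)}^c)=0$, i.e.\ $\max>0$---exactly the proposition---but not that each factor separately is conull. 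The stronger statement that both are conull is condition~(7) of Theorem~\ref{thm.Rob}, which the paper establishes only under the additional $\u$-balanced hypothesis, via the directional Poincar\'e series.

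Two minor remarks. The paper does not invoke the identification ``conservative part $=$ points with accumulating orbit'' as a black box; it argues both directions by hand (if conservative, any compact $B$ with $\m(B)>0$ is non-wandering, so a positive-measure subset returns to $B$ in forward or backward time; conversely, if a.e.\ forward orbit accumulates, a Fubini computation excludes any positive-measure wandering set). And your final paragraph is unnecessary: by definition $g^+\in\La_\u$ simply means $\limsup_{t\to+\infty}\Ga g\exp(t\u)\ne\emptyset$ in $\Ga\ba G$, which is exactly convergence of $[g]a_{t_i}$ for some $t_i\to+\infty$; no control on $\mu(\ga_i)$ and no appeal to Lemma~\ref{lem.F2} is needed.
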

\begin{proof}
Suppose that $(\Ga\ba G/M, \m,\{a_t\})$ is conservative.
Let $B$ be a compact subset of $\Ga\ba G/M$ with $\m (B)>0$.
If we set $B_0^{\pm}:=\{x\in B: \limsup_{t\to \pm \infty} xa_{t}\cap B\ne\emptyset\}$, then $\m (B_0^+\cup B_0^-)>0$.
Since $\tilde \m$ is equivalent to $\nu_\psi \otimes \nu_{\psi \circ \i} \otimes db$, it follows that $\m (B_0^+)>0$ (resp. $\m (B_0^-)>0$) if and only if $\nu_\psi (\La_{\mathsf u})>0$ (resp. 
$\nu_{\psi\circ \i} (\La_{\i ({\mathsf u})})>0$). Hence $\max (\nu_\psi(\La_{\mathsf u}), \nu_{\psi\circ \i} (\La_{\i({\mathsf u})})) >0$.

Now suppose that  $\nu_\psi(\La_{\mathsf u})>0$ (resp. $\nu_{\psi\circ \i} (\La_{\i({\mathsf u})}) >0$).
Then by Proposition \ref{lem.01}, $\nu_\psi(\La_{\mathsf u})=1$ (resp. $\nu_{\psi\circ \i} (\La_{\i({\mathsf u})}) =1$.) Hence for $\m$ a.e. $[g]$, we have $g^+\in \La_{\mathsf u}$ (resp. $g^-\in \La_{\i({\mathsf u})}$), and hence $[g] a_{t_i}$ is convergent for some sequence $t_i\to \pm \infty$. It follows that for $\m$ a.e. $x$, there exists a compact subset $B$ such that $\int_{\br} \mathbbm 1_B(xa_t) dt=\infty$. 
We claim that this implies that $(\Ga\ba G/M, \m,\{a_t\})$ is conservative. Assume in contradiction that
there exists a wandering set $W \subset \Ga\ba G/M$  with
$0<\m (W)<\infty$. 
By the $\sigma$-compactness of $\Gamma\ba G/M$, there exists
a compact subset $B$
such that \be\label{mw} \m \{x\in W:\int_{\br} \mathbbm 1_B(xa_t) dt=\infty\}\ge \m (W)/2.\ee

On the other hand, there exists an integer $n \geq 1$ for which the set
$$ W_n:=\left\{w \in W: \int_{-\infty}^\infty \mathbbm{1}_{W}(wa_t )\,dt\le n\right\}$$
has $\m$-measure strictly bigger than $\m(W)/2$. Note that the set $E:=W_n \exp (\bb R \v)$ is $\{a_t\}$-invariant and any $w\in E$ satisfies $\int_{-\infty}^\infty \mathbbm{1}_{W}(wa_t )\,dt\le n$.
 Hence, for any $R>0$, we get
\begin{align*}
    &\int_{W_n} \int_{-R}^R \mathbbm{1}_B (wa_t)dt d\m = \int_{-R}^R \int_{W_n} \mathbbm{1}_B (wa_t)d\m dt \\& 
    = 
    \int_{-R}^R \m(Ba_{-t} \cap W_n) dt =\int_{-R}^R \m(B \cap W_n a_t) dt \\& = \int_{-R}^R \m((B\cap E) \cap W_n a_t) dt =\int_{-R}^R \int_{B \cap E} \mathbbm{1}_{W_n} (xa_{-t}) d\m dt \\& = \int_{B \cap E} \int_{-R}^R \mathbbm{1}_{W_n} (xa_{-t})dt d\m  \le \int_{B \cap E} \int_{\bb R} \mathbbm{1}_{W_n} (xa_{-t})dt d\m \\ &\le \int_{B \cap E} n \;dt = n \cdot \m(B \cap E) < \infty
\end{align*}
where finiteness follows from the fact that $B$ is compact and $\m$ is Radon. 
Hence  $\int_{W_n} \int_{\br} \mathbbm{1}_B (wa_t)dt d\m <\infty$; so
$$\m \{ x\in W: \int_{\br} \mathbbm{1}_B (wa_t)dt  <\infty\}\ge \m (W_n)>\m(W)/2 .$$
contradicting \eqref{mw}.
 The rest of the claims can be proven similarly. 
\end{proof}

 Let $\tilde{\m}'$ denote the $M$-invariant lift of $\tilde\m$ to $G$ and ${\m}'$ the measure on $\G\ba G$ induced by $\tilde{\m}'$.
Since $\G$ is Zariski dense, there exists a normal subgroup
$M_\Ga<M$ of finite index such that each $\pc$-minimal subset of $\Ga\ba G$ is $M_\Ga$-invariant and the collection of all $\pc$-minimal subsets is parameterized by $M/M_\Ga$ 
(\cite[Thm. 1.9 and 2]{GR}, see also \cite[Sec. 3]{LO2}).

We will need the following notion:
 \begin{Def}[Transitivity group] \label{ttt}  For $g \in G$ with $g^{\pm}\in \La$,  define the subset $\mathcal H_\Gamma^s(g)<AM$ as follows:
 $am \in \mathcal{H}^s_{\G}(g)$ if and only if
    there exist $\gamma\in \Ga$ and a sequence $h_i\in N^-\cup N^+$, $i=1,\ldots, k$ such that
 $$(gh_1h_2\ldots h_r)^{\pm} \in \La \,\,\,\text{for all}\,\,\, 1\leq r\leq k\quad \text{and}\quad 
\gamma g h_1h_2\ldots h_k=gam. $$
It is not hard to check that $\mathcal H_\Gamma^s(g)$ is a subgroup (cf. \cite[Lem. 3.1]{Win}); it is called the strong transitivity subgroup.
\end{Def}

The following was obtained in \cite{LO2} using the work of Guivarch-Raugi \cite[Thm. 1.9]{GR}.
\begin{lem}\cite[Coro. 3.8]{LO2}\label{st}
For any $g\in G$ with $g^{\pm}\in \La$, the closure of $\mathcal H_\Gamma^s(g)$ contains $ AM_\Ga $.
\end{lem}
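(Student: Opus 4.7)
The plan is to deduce this by combining the explicit definition of the strong transitivity group as a group of holonomies along zig-zag paths through $\Lambda$ with the Zariski density input (both for $\Gamma$ itself and for the limit set $\Lambda\subset\mathcal{F}$), and then feeding this into the Guivarch--Raugi description of $P^\circ$-minimal subsets of $\Gamma\backslash G$. The output $AM_\Gamma$ is prescribed: the index-finite normal subgroup $M_\Gamma\lhd M$ is exactly the one that labels the $P^\circ$-minimal subsets, so any argument has to extract this specific group and no smaller one.

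First I would check the formal preliminaries: that $\overline{\mathcal{H}^s_\Gamma(g)}$ is a closed subgroup of $AM$ (already a subgroup by the remark after Definition~\ref{ttt}), and that it is invariant under conjugation by elements of $\Gamma$ that stabilize the pair $(g^+,g^-)$. Then I would analyze the projection $\pi_A\colon AM\to A$ of $\mathcal{H}^s_\Gamma(g)$. The key observation is a standard shadowing/closing argument: for any loxodromic $\gamma\in\Gamma$ with attracting fixed point $y^+$ and repelling fixed point $y^-$ both in $\Lambda$, one can find a zig-zag $h_1\in N^-,\,h_2\in N^+,\ldots$ starting at $g$ that lands near the $\gamma$-axis, and the resulting holonomy element of $\mathcal{H}^s_\Gamma(g)$ has $A$-component arbitrarily close to the Jordan projection $\lambda(\gamma)$. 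Since $\Gamma$ is Zariski dense, the Jordan projections $\{\lambda(\gamma):\gamma\in\Gamma\text{ loxodromic}\}$ generate a dense subgroup of $A$ (Benoist), and hence $\pi_A\bigl(\overline{\mathcal{H}^s_\Gamma(g)}\bigr)=A$.

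Second, for the $M$-component I would argue both containments. For $\overline{\mathcal{H}^s_\Gamma(g)}\subset AM_\Gamma$: by Guivarch--Raugi \cite[Thm.~1.9]{GR}, the set $\mathcal{E}=\{x\in\Gamma\backslash G:x^+\in\Lambda\}$ breaks up as a disjoint union of $P^\circ$-minimal subsets $Y_1,\ldots,Y_r$ parametrized by $M/M_\Gamma$. Every element of $\mathcal{H}^s_\Gamma(g)$ is obtained by moving along $N^\pm$ directions whose orbits remain inside one $P^\circ$-orbit of $[g]$; hence the holonomy necessarily preserves the $P^\circ$-minimal subset containing $[g]$, which forces its $M$-component to lie in $M_\Gamma$. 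For the reverse inclusion $AM_\Gamma\subset\overline{\mathcal{H}^s_\Gamma(g)}$: the group $\overline{\mathcal{H}^s_\Gamma(g)}$ is a closed subgroup of $AM$ whose $A$-projection is all of $A$; its intersection with $M$ is a closed subgroup $M^\star\subset M$, and one shows $M^\star\supset M_\Gamma$ by arguing that the closure of the $\overline{\mathcal{H}^s_\Gamma(g)}$-orbit of $[g]$ is precisely the $P^\circ$-minimal subset containing $[g]$ (this is where one uses that $\Lambda$ is Zariski dense in $\mathcal{F}$ together with the minimality of each $Y_i$ for $P^\circ$).

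The hard part is the second inclusion in the $M$-direction: one needs to upgrade the density of $\pi_A(\overline{\mathcal{H}^s_\Gamma(g)})$ to full density of the orbit inside the corresponding $P^\circ$-minimal subset, and this relies on the precise form of the Guivarch--Raugi theorem, which produces $M_\Gamma$ as the stabilizer in $M$ of the decomposition of $\mathcal{E}$ into $P^\circ$-minimal components. All earlier steps are formal once one has the zig-zag construction and Benoist's density theorem; the real content is encapsulated in the cited result \cite[Thm.~1.9]{GR}, so in a self-contained write-up I would reduce to Guivarch--Raugi at exactly this point rather than re-prove it.
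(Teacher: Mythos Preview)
The paper does not prove this lemma: it is imported verbatim from \cite[Coro.~3.8]{LO2}, with only the sentence preceding it noting that the proof in \cite{LO2} relies on Guivarc'h--Raugi \cite[Thm.~1.9]{GR}. There is therefore no argument in the present paper to compare your sketch against; what can be said is that your identification of the two inputs---Benoist's density of Jordan projections for the $A$-component and \cite{GR} for the $M$-component---matches what the paper signals.

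Two comments on the sketch itself. First, the lemma asserts only the inclusion $AM_\Ga\subset\overline{\mathcal H_\Ga^s(g)}$; the reverse inclusion is neither claimed nor needed, and your argument for it has a gap: moves along $N^+$ do not stay inside a single $\pc$-orbit (recall $\pc=M^\circ AN^-$), so the assertion that every zig-zag holonomy preserves the $\pc$-minimal component is not immediate from the orbit structure alone. Second, in the direction that \emph{is} needed, your formulation ``the closure of the $\overline{\mathcal H_\Ga^s(g)}$-orbit of $[g]$ is the $\pc$-minimal subset containing $[g]$'' cannot be literally correct: that orbit is contained in $[g]AM$, whereas the $\pc$-minimal subset is $\pc$-invariant and strictly larger. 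The passage from ``$\pi_A$ is surjective'' to ``the $M$-intersection contains $M_\Ga$'' requires instead a concrete description of $M_\Ga$ (supplied by \cite{GR}) together with a construction realizing each of its generators as the $M$-part of an explicit zig-zag holonomy; you are right to flag this as the non-formal core, but the reduction to \cite{GR} does not go through the orbit-closure statement you wrote.
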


We now prove the following higher rank version of the Hopf-dichotomy, using Lemma \ref{st}.
\begin{prop}\label{mini}
Let $Y$ be a $\pc$-minimal subset of $\G\ba G$ such that $\m'(Y)>0$.
Then
$(\m'|_Y, \{a_t\})$ is conservative if and only if
$( \m'|_Y, \{a_t\})$ is ergodic.
\end{prop}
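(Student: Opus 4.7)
\textbf{Proof plan for Proposition \ref{mini}.}

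The direction ``ergodic $\Rightarrow$ conservative'' is routine: if $\m'|_Y$ were totally dissipative, then $Y$ would be a countable union of wandering sets modulo $\m'$, and the characteristic function of a suitable base would be a non-constant $\{a_t\}$-invariant $L^\infty$ function, contradicting ergodicity given $\m'(Y)>0$. I focus on the converse. The plan is to adapt Hopf's classical ergodicity argument, using the strong transitivity group $\cal H^s_\G$ together with Lemma \ref{st} as the substitute for the classical use of dense geodesics / topological mixing available only in rank one.

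Assuming conservativity, pick $f\in C_c(Y)$ and a strictly positive $g\in C_c(Y)$. The Hopf ratio ergodic theorem (applicable because conservativity and $\sigma$-compactness of $Y$ guarantee $\int_{-\infty}^{\infty}g(xa_t)\,dt=\infty$ $\m'|_Y$-a.e.) produces an $\{a_t\}$-invariant a.e.\ limit
\[
\tilde f(x)=\lim_{T\to +\infty}\frac{\int_0^T f(xa_t)\,dt}{\int_0^T g(xa_t)\,dt}
\]
which also equals the analogous backward limit as $T\to -\infty$. Since $\mathsf u\in\inte\fa^+$, $a_t$ uniformly contracts $N^+$ as $t\to +\infty$ and $N^-$ as $t\to -\infty$. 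The standard uniform-continuity comparison then shows that the forward ratio for $xh$ with $h\in N^+$ agrees with that for $x$ in the limit, so $\tilde f$ is essentially $N^+$-invariant; running the same argument on the backward averages gives essential $N^-$-invariance.

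I would then upgrade $N^\pm$-invariance to essential $\pc$-invariance via the transitivity group. For $x=[g]\in Y$ with $g^\pm\in\La$ (a full $\m'|_Y$-measure condition, as $\mathrm{supp}(\m'|_Y)\subseteq\Omega$), every $am\in\cal H^s_\G(g)\subset AM$ is realised by a finite chain of alternating $N^\pm$-multiplications whose intermediate vectors keep their boundary pair in $\La\times\La$; iterating the $N^\pm$-invariance of $\tilde f$ along this chain yields $\tilde f(x\cdot am)=\tilde f(x)$ $\m'|_Y$-a.e. By Lemma \ref{st}, $\overline{\cal H^s_\G(g)}\supseteq AM_\G$. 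Passing to this closure — after first replacing $\tilde f$ by a continuous representative via convolution against a small $AM_\G$-bump, which is legitimate because the explicit Hopf-coordinate form of $\m'|_Y$ makes it quasi-invariant under small $AM_\G$-perturbations on its support — yields essential $AM_\G$-invariance. Combined with the $N^\pm$-invariance, $\tilde f$ is essentially invariant under the group $\langle N^+,N^-,AM_\G\rangle\supseteq\pc$. Since $Y$ is $\pc$-minimal with $\m'(Y)>0$ and $\m'|_Y$ has full support in $Y$, any essentially $\pc$-invariant measurable function must be $\m'|_Y$-a.e.\ constant; hence $\tilde f$ is constant, and running this for a countable dense family of $f\in C_c(Y)$ delivers the $\{a_t\}$-ergodicity of $(Y,\m'|_Y)$ via Hopf's ratio criterion.

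The chief obstacle is the step from $\cal H^s_\G(g)$-invariance to $\overline{\cal H^s_\G(g)}$-invariance: the $\m'$-null exceptional sets depend on the group element and may accumulate under limits. The smoothing device above is the standard remedy, but it requires a careful Fubini argument, using the explicit conformal structure of $\m'|_Y$, to verify that the convolved function agrees a.e.\ with $\tilde f$ and inherits the $N^\pm$-invariance. A secondary delicate point is that $\pc$-minimality is a topological statement whereas the final ``a.e.\ constant'' conclusion is measure-theoretic; the bridge is the full topological support of $\m'|_Y$ in $Y$ (in both the $\cal F^{(2)}$ and $\fa$ factors of the Hopf decomposition), which forces an essentially $\pc$-invariant function to agree on a dense family of $\pc$-orbits and therefore to be a.e.\ constant.
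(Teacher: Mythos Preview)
There are two genuine gaps.

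\textbf{First}, the direction ``ergodic $\Rightarrow$ conservative'' is \emph{not} routine, and your argument fails: the characteristic function of a wandering set (or of a ``base'') is not $\{a_t\}$-invariant, and an ergodic non-singular flow can perfectly well be completely dissipative --- translation on $(\br,\mathrm{Leb})$ is the standard example. What the paper actually does is use the Hopf decomposition to reduce to ruling out the case ``completely dissipative and ergodic'', and then observe that such a system is isomorphic to translation on $\br$; this forces $\dim\fa=1$ (otherwise $\tilde\m$, being a product with Lebesgue on $\fa$, gives every single $\{a_t\}$-orbit measure zero) and forces $\nu_\psi\otimes\nu_{\psi\circ\i}$ to be atomic on a single $\Ga$-orbit in $\F^{(2)}$, contradicting Zariski density. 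So this direction genuinely requires the specific product structure of $\m'$.

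\textbf{Second}, in ``conservative $\Rightarrow$ ergodic'' your endgame is where the proof breaks. You assert that $\tilde f$ is essentially invariant under $\langle N^+,N^-,AM_\Ga\rangle$ and then invoke $\pc$-minimality. But (i) separate a.e.~$N^+$-invariance and a.e.~$N^-$-invariance do \emph{not} compose to a.e.~invariance under the group they generate --- this composition problem is the entire content of the Hopf argument, and your smoothing step addresses only the passage to $\overline{\cal H^s_\Ga(g)}\supseteq AM_\Ga$, not the chaining of $N^+$- and $N^-$-moves through the conull set where $f_+=f_-$; (ii) even granting such invariance, $\langle N^+,N^-\rangle$ already equals $G$, so the appeal to $\pc$-minimality is a red herring; and (iii) topological minimality of a $\pc$-action does not by itself force an essentially $\pc$-invariant measurable function to be a.e.~constant with respect to a merely quasi-invariant measure. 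The paper's route is different and more delicate: it builds the $(AM)_\e$-averaged functions $F^\e$, uses Fubini to produce a conull set $E\subset G/AM\simeq\F^{(2)}$ of good fibres and the subsets $E^\pm\subset\La$, shows via Lemma~\ref{st} that $F^\e|_{gAM}$ is $AM_\Ga$-invariant for $gAM\in E$ with $g^\pm\in E^\pm$, and then --- crucially --- runs a \emph{three-hop} argument (using Zariski density of $E^+$ in $\F$) to connect any two points of $E^\dagger=\{g:g^\pm\in E^\pm\}$ by $N^\pm$-moves that stay inside $E^\dagger$. This yields first the $AM$-ergodicity of $\m'$; only then does the paper use that $\Sigma_0=\{B:\Ga B AM_\Ga=B\}$ is, modulo $\m'$, the finite algebra generated by the $M/M_\Ga$-translates of a single set, and identifies $\tilde Y$ with one of its atoms to conclude that $F$ is constant on $\tilde Y$.
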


\begin{proof} 
Suppose that $(\m'|_Y, \{a_t\})$ is conservative. Fix $x_0\in \op{supp} ({\m}'|_Y)$ and let $B_n\subset \Ga\ba G$ denote the ball of radius $n$ centered at $x_0$.
Let $r$ be a positive function on $[0,\infty)$
which is affine on each $[n, n+1]$ and $r(n)=1/(2^{n+1}  {\m}'(B_{n+1}))$.
Then the function $\rho(x):= r( d(x_0, x))$ is a positive Lipschitz function on $\Ga\ba G$  with a uniform Lipschitz constant.
In particular, it is uniformly continuous and $\rho\in L^1( \m')$, since
$$
\norm{\rho}_{L^1( \m')}=\sum_{n=1}^\infty \int_{B_{n}-B_{n-1}}\rho\, d \m'\le \sum_{n=1}^\infty\frac{1}{2^{n} \m'(B_{n})} \m'(B_n)<\infty.
$$

By the definition of $\rho$, for all $[g]\in \Ga\ba G$ such that $g^+\in\La_{\mathsf u}$ and $g^-\in\La_{\op{i}({\mathsf u})}$, we have
\begin{equation}\label{eq.div}
\int_0^\infty \rho([g]a_t)\,dt=\int_0^\infty \rho([g]a_{-t})\,dt=\infty.
\end{equation}
Now let $f\in C_c(\Ga\ba G)$ be arbitrary. By the Hopf ratio ergodic theorem,  the following $f_+$ and $f_-$ are well-defined and equal $\m'$-a.e.:
\begin{equation}\label{eq.ratio}
f_+(x):=\lim_{T\to { \infty}}\frac{\int_0^T f (xa_t)\,dt}{\int_0^T \rho(xa_t)\,dt}
\text{ and } f_-(x):=\lim_{T\to { \infty}}\frac{\int_0^T f (xa_{-t})\,dt}{\int_0^T \rho(xa_{-t})\,dt}.
\end{equation}
By the uniform continuity of $f$ and $\rho$, \eqref{eq.div} and the assumption that $\mathsf{u}\in \inte \fa^+$,  $f_{\pm}$ are $N^{\pm}$-invariant respectively. Let $\tilde f_{\pm}:G\to\br$ be
a left $\Gamma$-invariant lift of $f_{\pm}$. Let $\cal B$ denote the Borel $\sigma$-algebra of $G$ and set
$$\Sigma:=\{B\in \cal B: \tilde{\mathsf m}'(B\triangle B^\pm)=0
\text{  for some $B^\pm\in \cal B$ such that $ \Ga B^{\pm}= B^{\pm} N^\pm$ }\}.$$

Let $F:G\to \br$ be a $\Sigma$-measurable and left $\Gamma$-invariant function such that
$F(g)=\tilde f_+(g)=\tilde f_-(g) $ for $\tilde \m'$ a.e $g \in G$. Set
$$
E:=\left\{ gAM :
\begin{array}{c}
F|_{gAM}\text{ is measurable and}\\
F(gam)=\tilde f_+(gam)=\tilde f_-(gam)\\
\text{for Haar a.e. }am\in AM
\end{array}
\right\}\subset G/AM.
$$
By Fubini's theorem, $E$ has full measure in $G/AM\simeq\cal F^{(2)}$ with respect to the measure $d\nu_{\psi}\otimes d\nu_{\psi\circ \i}$.
For all small $\e>0$, define functions $F^\e, \tilde f_\pm^\e : G\to\bb R$ by
$$
F^\e(g):=\tfrac{1}{\op{Vol}(AM)_\e}\int_{(AM)_\e}F(g\ell)\,d\ell,\text{ }\tilde f_\pm^\e(g):=\tfrac{1}{\op{Vol}(AM)_\e} \int_{(AM)_\e}\tilde f_\pm(g\ell)\,d\ell
$$where $(AM)_\e$ denotes the $\e$-ball around $e$ in $AM$ and $d\ell$ is the Haar measure on $AM$.
Note that if $gAM\in E$, then $F^\e$ and $ \tilde f_\pm^\e$ are continuous and identical on $gAM$.
Moreover, $F^\e$ is left $\Ga$-invariant and $\tilde f_\pm ^\e$ are $N^\pm$-invariant, as $AM$ normalizes $N^{\pm}$.
Using the isomorphism between $G/AM$ and $\cal F^{(2)}$ given by $gAM\mapsto (g^+, g^-)$,
we may consider $E$ as a subset of  $\cal F^{(2)}$.
We then define
\begin{align*}
E^+:&=\{\xi\in\La : (\xi,\eta') \in E\;\;\text{ for $\nu_{\psi\circ \i}$-a.e. }\eta'\in\La\};\\
E^-:&=\{\eta\in\La : (\xi',\eta) \in  E\;\; \text{ for $\nu_{\psi}$-a.e. }\xi'\in\La\}.
\end{align*}
 
Then $E^+$ is $\nu_\psi$-conull and $E^-$ is $\nu_{\psi\circ \i}$-conull by Fubini's theorem.
By a similar argument as in \cite[Lem. 4.6]{LO2},  we can show that
for any $gAM\in E$ with $g^{\pm}\in E^{\pm}$, and any $\e>0$,
$F^\epsilon|_{gAM}$ is $AM_\Gamma$-invariant, using the fact that the closure of
$\mathcal H_\Ga^s(g)$ contains $AM_\Ga$ (Lemma \ref{st}). 
It follows that $F$ is $\Sigma_0$-measurable where 
$$\Sigma_0:=\{B\in\cal B:  B=\Ga BAM_\Ga\}.$$  

We claim that if $f$ is $M$-invariant, then $F$ is constant on the $\m'$-conull set
$E^\dagger:=\{g\in G: g^{\pm}\in E^{\pm}\}$. Using Hopf's ratio ergodic theorem once more, this would in turn imply that
$\m'$ is $M\{a_t\}$-ergodic.
Assume $f$ is $M$-invariant. Since $F=\lim_{\e\to 0} F^\e$ $\m'$-a.e. by the Lebesgue differentiation theorem, it suffices to show that  $F^\e$ is constant on $E^\dagger$.
Since $F^\e$ is  $AM$-invariant on $E^{\dagger}$ and $F^{\e} (g h)=F^\e(g)$ for all $g\in E^\dagger$ and $h\in N^{\pm}$ with $gh\in E^\dagger$,
it is again enough to show  that for any $g_1, g_2\in E^\dagger$, there exist $h_1,h_2, h_3\in N^+\cup N^-$ such that $g_1 h_1 h_2h_3 \in g_2 AM$ and $g_1h_1, g_1h_1h_2\in E^\dagger$.

We note that if $(\xi, \eta_1), (\xi, \eta_2)\in \F^{(2)}$, then there exist $g\in G, h\in N$ such that $(\xi, \eta_1)=(g^+, g^-)$
and $(\xi, \eta_2)=((gh)^+, (gh)^-)$. Similarly, 
 if $(\xi_1, \eta), (\xi_2, \eta)\in \F^{(2)}$, then there exist $g\in G, h\in N^+$ such that $(\xi_1, \eta)=(g^+, g^-)$
and $(\xi_2, \eta)=((gh)^+, (gh)^-)$.

Note that $E^+$ is $\Gamma$-invariant. Since the limit set $\La$ is the unique $\Gamma$-minimal subset of $\F$, the closure of $E^+$ contains $\La$, and in particular it is Zariski dense.
Therefore we can choose $\xi\in E^+$ such that
$(\xi, g_1^-), (\xi, g_2^-)\in \F^{(2)}$. Let  $h_1, h_2, h_3\in N^+\cup N^-$ be such that 
\begin{align*}
    (\xi, g_1^-)&=(g_1h_1^+, g_1h_1^-)\\
    (\xi, g_2^-)&= (g_1h_1h_2^+, g_1h_1h_2^-)\\
    (g_2^+, g_2^-)&= (g_1 h_1h_2h_3^+, g_1h_1h_2h_3^-).
\end{align*}
Hence the claim is proved.
 In particular, $\m'$ is $AM$-ergodic.
 
 Let $\tilde Y\subset G$ be the $\Gamma$-invariant lift of $Y$.
 In order to show that $\m'|_Y$ is $\{a_t\}$-ergodic, it suffices to show that $F$, associated to an arbitrary function $f\in C_c(\Ga\ba G)$, is constant on $\tilde Y$.
 It follows from the $AM$-ergodicity of
 $\m'$ that $\Sigma_0$ is $\tilde \m '$-equivalent to a finite $\sigma$-algebra generated by $\{B.s: s\in M_\Gamma\ba M\}$ for some $B\in \Sigma_0$.  Since $\{\tilde Y.s: s\in M_\Ga\ba M\}\subset \Sigma_0$ and the $\tilde Y.s$'s are mutually disjoint, it follows that $\tilde Y= B.s$ mod $\tilde \m'$  for some $s\in M_\Gamma\ba M$. 

Since $F$ is constant on $B.s$, being $\Sigma_0$-measurable, it implies that $F$ is constant on $\tilde Y$, concluding that $\m'|_Y$ is $\{a_t\}$-ergodic. 

Now to show the converse, assume that $(\mathsf m'|_Y, \{a_t\})$
is ergodic. Since the quotient map $\Ga\ba G\to \Ga\ba G/M$ is a proper map,
it suffices to show that
$(\Ga\ba G/M, \mathsf m, \{a_t\})$ is conservative when it is ergodic.
Assume that $(\Ga\ba G/M, \mathsf m, \{a_t\})$ is ergodic. Then it is either conservative or completely dissipative by the Hopf decomposition theorem \cite{Kr}.
Suppose it is completely dissipative. Then
it is isomorphic to a translation on $\bb R$ with respect to the Lebesgue measure.
This implies that the dimension of $\fa$ must be one, since
$\tilde{\m}=\tilde{\mathsf m} (\nu_{\psi}, \nu_{\psi\circ \i})$ gives measure zero on any one dimensional flow otherwise.
It also implies that
$\nu_\psi\otimes \nu_{\psi \circ \i}$ is supported on a single $\Gamma$-orbit, say, $\Ga(\xi_0, \eta_0)$ in $\cal F^{(2)}$.
Since $\nu_\psi$ (resp. $\nu_{\psi\circ\i}$) must be an atomic measure supported on $\Ga\xi_0$ (resp. $\Ga\eta_0$), 
it follows that $(\Ga\xi_0\times\Ga\eta_0) \cap \F^{(2)} = \Ga(\xi_0,\eta_0)$.
This implies that
 $\Ga \eta_0\subset \G_{\xi_0} \eta_0$ where $\G_{\xi_0}$ denotes the stabilizer of $\xi_0$ in $\Ga$. Since the limit set of $\G_{\xi_0}$ is finite (as we are in the rank one situation), this is a contradiction as $\Gamma$ is non-elementary.
 This proves that $\mathsf m$ is conservative for the $\{a_t\}$-action.
\end{proof}

\section{Directional Poincare series}\label{sec:DP}
Let $\Ga<G$ be a Zariski dense discrete subgroup.
We define the limit cone $\L_\Gamma\subset \fa^+$ as the  asymptotic cone of $\mu(\Ga)$.
Then $\L_\Ga$ coincides with the smallest cone containing the Jordan projection of $\G$
and is a convex cone with non-empty interior \cite{Ben}. 

Quint \cite{Quint1} introduced the following:
\begin{Def}\label{def.GI}\rm The growth indicator function $\psi_{\Gamma}\,:\,\frak a^+ \rightarrow \br \cup\lbrace- \infty\rbrace$  is defined as a homogeneous function, i.e., $\psi_\Gamma (t{\mathsf u})=t\psi_\Gamma ({\mathsf u})$ for all $t>0$, such that
  for any unit vector $\mathsf{u}\in \frak a^+$,
 \begin{equation*}
\psi_{\Gamma}(\mathsf u):=\inf_{\underset{\mathsf{u}\in\cal C}{\mathrm{open\;cones\;}\cal C\subset \fa^+}}\tau_{\cal C}
\end{equation*}
where $\tau_{\cal C}$ is the abscissa of convergence of the series $\sum_{\ga\in\Ga, \mu(\ga)\in\cal C}e^{-t\norm{\mu(\ga)}}$.

We consider $\psi_\G$ as a function on $\fa$ by setting $\psi_\Ga=-\infty$ outside $\fa^+$.
\end{Def}
Quint showed that $\psi_\Ga$ is upper semi-continuous, $\psi_\G >0$ on $\inte \L_\Ga$, 
$\psi_\Ga\ge 0$ on $\L_\Ga$ and $\psi_\Ga = -\infty$ outside $\L_\Ga$ \cite[Thm. IV.2.2]{Quint1}.

\begin{lemma}\label{lem.neq} 
Let $\psi\in \fa^*$ and $\mathsf{u}\in \inte \fa^+$ be such that $\psi({\mathsf u})>\psi_\Ga({\mathsf u})$. Then for any $R>0$,
$$
\sum_{\ga\in\Ga_{{\mathsf u},R}}e^{-\psi(\mu(\ga))}<\infty.
$$
\end{lemma}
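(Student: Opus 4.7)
The plan is to use the defining infimum property of Quint's growth indicator: $\psi_\Ga$ is homogeneous, and at a unit vector $\mathsf u\in\fa^+$ it equals $\inf_{\cal C\ni\mathsf u}\tau_{\cal C}$, where $\cal C$ ranges over open cones containing $\mathsf u$ and $\tau_{\cal C}$ is the abscissa of convergence of $\sum_{\mu(\ga)\in\cal C}e^{-t\|\mu(\ga)\|}$. By homogeneity of both sides of the hypothesis $\psi(\mathsf u)>\psi_\Ga(\mathsf u)$, I first reduce to $\mathsf u$ being a unit vector. The hypothesis then supplies an open cone $\cal C\ni\mathsf u$ and a $\delta>0$ with $\tau_{\cal C}<\psi(\mathsf u)-\delta$; in particular the series $\sum_{\mu(\ga)\in\cal C}e^{-(\psi(\mathsf u)-\delta/2)\|\mu(\ga)\|}$ converges, and this will serve as the majorant.

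Next I would split into two cases. If $\mathsf u\notin\L_\Ga$, then $\psi_\Ga(\mathsf u)=-\infty$; using closedness of the limit cone, I can pick $\cal C\ni\mathsf u$ with $\ov{\cal C}\cap\L_\Ga=\{0\}$, and then only finitely many $\mu(\ga)$ can lie in $\cal C$, since any limit direction would have to belong to $\ov{\cal C}\cap\L_\Ga$. As any $\ga\in\Ga_{\mathsf u,R}$ with $\|\mu(\ga)\|$ large has direction close to $\mathsf u$ and hence in $\cal C$, the set $\Ga_{\mathsf u,R}$ is itself finite up to finitely many exceptions, and the sum is trivially finite. In the remaining case $\mathsf u\in\L_\Ga$ one has $\psi_\Ga(\mathsf u)\ge 0$, so the hypothesis forces $\psi(\mathsf u)>0$, which will be essential.

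The main step is a direct comparison. For $\ga\in\Ga_{\mathsf u,R}$, write $\mu(\ga)=t_\ga\mathsf u+w_\ga$ with $t_\ga\ge 0$ and $\|w_\ga\|<R$. Linearity of $\psi$ gives
\[
\psi(\mu(\ga))=t_\ga\psi(\mathsf u)+\psi(w_\ga)\ge t_\ga\psi(\mathsf u)-R\|\psi\|,
\]
while $\|\mu(\ga)\|\le t_\ga+R$ combined with $\psi(\mathsf u)>0$ yields
\[
\psi(\mu(\ga))\ge \psi(\mathsf u)\,\|\mu(\ga)\|-R(\psi(\mathsf u)+\|\psi\|).
\]
Moreover, once $\|\mu(\ga)\|$ exceeds a threshold depending only on $R$ and the aperture of $\cal C$, the unit direction $\mu(\ga)/\|\mu(\ga)\|$ is close enough to $\mathsf u$ to lie in $\cal C$. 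Thus for all but finitely many $\ga\in\Ga_{\mathsf u,R}$ one has $\mu(\ga)\in\cal C$ and
\[
e^{-\psi(\mu(\ga))}\le e^{R(\psi(\mathsf u)+\|\psi\|)}\,e^{-(\psi(\mathsf u)-\delta/2)\|\mu(\ga)\|},
\]
so $\sum_{\ga\in\Ga_{\mathsf u,R}}e^{-\psi(\mu(\ga))}$ is dominated by a finite term plus a constant multiple of the convergent series $\sum_{\mu(\ga)\in\cal C}e^{-(\psi(\mathsf u)-\delta/2)\|\mu(\ga)\|}$.

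The only real bookkeeping obstacle is reconciling the two demands placed on $\cal C$: it must be small enough for $\tau_{\cal C}$ to drop below $\psi(\mathsf u)-\delta$, yet a large enough neighborhood of $\mathsf u$ that the tube $\Ga_{\mathsf u,R}$ eventually enters it. These demands are easily compatible: first shrink $\cal C$ around $\mathsf u$ to achieve the convergence bound via the definition of $\psi_\Ga$, then choose the lower threshold on $\|\mu(\ga)\|$ accordingly, so the geometric tube condition $\|\mu(\ga)-t_\ga\mathsf u\|<R$ forces $\mu(\ga)\in\cal C$ once $t_\ga$ is sufficiently large relative to $R$ and the aperture of $\cal C$.
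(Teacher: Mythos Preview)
Your argument is correct. The paper's proof follows a somewhat different route: rather than working directly with the abscissa $\tau_{\cal C}$ from the defining infimum of $\psi_\Ga$, it uses the upper semi-continuity of $\psi_\Ga$ (together with continuity of $\psi$) to find an open convex cone $\cal C\ni\mathsf u$ with $\ov{\cal C}\subset\inte\fa^+$ on which $\psi>\psi_\Ga$, then constructs a continuous homogeneous function $\theta$ on $\fa$ satisfying $\psi\ge\theta>\psi_\Ga$ on $\cal C$ and $\theta>\psi_\Ga$ on all of $\fa^+$, and finally invokes Quint's lemma \cite[Lem.~III.1.3]{Quint1} to conclude $\sum_{\ga\in\Ga}e^{-\theta(\mu(\ga))}<\infty$, hence $\sum_{\mu(\ga)\in\cal C}e^{-\psi(\mu(\ga))}<\infty$. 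Your approach is more self-contained in that it avoids citing Quint's lemma, instead deducing convergence directly from the definition of $\tau_{\cal C}$ and an elementary linear comparison $\psi(\mu(\ga))\ge\psi(\mathsf u)\,\|\mu(\ga)\|-C$ on the tube. The price is the case split on whether $\mathsf u\in\L_\Ga$, needed to ensure $\psi(\mathsf u)>0$ so that this comparison goes in the right direction; the paper's construction of a global $\theta$ with $\theta>\psi_\Ga$ everywhere handles both cases uniformly without needing positivity of $\psi(\mathsf u)$.
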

\begin{proof} 
 Since $\psi({\mathsf u}) > \psi_\Ga ({\mathsf u})$, the upper-semi continuity of $\psi_\Ga$ implies that there exists a small open convex cone $\cal C$ containing $\mathsf{u}$ such that $\overline{\cal C}\subset \inte \fa^+$ and
$ \psi >\psi_\Ga$ on $\overline{\cal C}$. Since $\psi>\psi_\Ga$ on some open convex cone $\cal C'$ containing $\overline{\cal C}$,
we can choose 
a continuous homogeneous function $\theta:\fa \to \br$ such that
$\psi \ge \theta >\psi_\Ga  $ on $\cal C$ and $\theta >\psi_\Ga$ on $\fa^+$.
Since $\psi_\Ga =-\infty$ outside $\fa^+$,
we have $\theta >\psi_\Ga $ on $\fa-\{0\}$.
Applying \cite[Lem. III.1.3]{Quint1} to the measure $\sum_{\ga\in\Ga}\delta_{\mu(\ga)}$ on $\fa^+$, we get
$$
\sum_{\ga\in\Ga,\mu(\ga)\in\cal C}e^{-\psi(\mu(\ga))}\leq \sum_{\ga\in\Ga}e^{-\theta(\mu(\ga))}<\infty,
$$

Since $\#\{\ga\in\Ga_{{\mathsf u},R}:\mu(\ga)\not\in\cal C\}<\infty$ for any $R>0$, 
 the lemma follows.
\end{proof}

Let $\psi\in \fa^*$ and fix a pair of $(\Gamma, \psi)$ and $(\Gamma, \psi\circ \i)$-conformal measures ($\nu_\psi$, $\nu_{\psi \circ \i}$) on $\F$ respectively.
We let $\m$ denote the BMS measure on $\Ga\ba G/M$ associated to $(\nu_\psi, \nu_{\psi \circ \i})$.

We fix a unit vector $\mathsf{u}\in \inte \fa^+$ such that $\psi({\mathsf u})>0$, and 
set $$a_t:=\exp(t{\mathsf u})\quad\text{ and }\quad \delta:=\psi({\mathsf u}).$$
For an interval $I\subset \br$, we sometimes write $a_I=\{a_t: t\in I\}$.
We make the following simple observation: for any $R>0$,
\be\label{same} \sum_{\ga\in\Ga_{\i({\mathsf u}),R}}e^{-\psi(\i (\mu(\ga)))}=
\sum_{\ga^{-1} \in\Ga_{{\mathsf u},R}} e^{-\psi(\mu(\ga^{-1}))}=\sum_{\ga \in\Ga_{{\mathsf u},R}} e^{-\psi(\mu(\ga))}.\ee

\begin{lemma}\label{lem.se}
If $\max (\nu_\psi(\La_{\mathsf u}), \nu_{\psi\circ \i} (\La_{\i ({\mathsf u})})) >0$, then there exists $R>0$ such that
$$
\sum_{\ga\in\Ga_{{\mathsf u},R}}e^{-\psi(\mu(\ga))}=\infty= \sum_{\ga\in\Ga_{\i({\mathsf u}),R}}e^{-\psi(\i (\mu(\ga)))}.
$$
\end{lemma}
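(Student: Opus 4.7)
The plan is to reduce to the case $\nu_\psi(\Lambda_{\mathsf u}) > 0$ (the other case being symmetric via \eqref{same}) and then to derive divergence of the Poincar\'e series from a Borel--Cantelli style counting argument on shadows, using the zero--one law (Proposition \ref{lem.01}) together with the shadow lemma (Lemma \ref{lem.shadow}).

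First I would observe that the identity \eqref{same} shows the two sums in the statement are equal (for the same $R$), so it suffices to produce one $R > 0$ making one of them divergent. By hypothesis, at least one of $\nu_\psi(\Lambda_{\mathsf u})$ or $\nu_{\psi \circ \i}(\Lambda_{\i(\mathsf u)})$ is positive. I treat the case $\nu_\psi(\Lambda_{\mathsf u}) > 0$; the other case is handled identically after replacing $\psi$ by $\psi \circ \i$ and $\mathsf u$ by $\i(\mathsf u)$, and then translating back through \eqref{same}.

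By Proposition \ref{lem.01}, $\nu_\psi(\Lambda_{\mathsf u}) = 1$. Since $\Lambda_{\mathsf u} = \bigcup_{R>0} \Lambda_{\mathsf u, R}$ is an increasing union, I can pick $R > 1$ large enough that both Lemma \ref{lem.shadow} applies and $\nu_\psi(\Lambda_{\mathsf u, R}) > 0$. Now the key observation is just unpacking the definition \eqref{lau}: a point $\xi \in \Lambda_{\mathsf u, R}$ lies in $O_R(o, \gamma o)$ for infinitely many $\gamma \in \Gamma_{\mathsf u, R}$, and therefore
$$\sum_{\gamma \in \Gamma_{\mathsf u, R}} \mathbbm{1}_{O_R(o, \gamma o)}(\xi) = \infty \quad \text{for every } \xi \in \Lambda_{\mathsf u, R}.$$
Integrating against $\nu_\psi$ over $\Lambda_{\mathsf u, R}$ (which has positive $\nu_\psi$-mass) and applying Tonelli yields
$$\sum_{\gamma \in \Gamma_{\mathsf u, R}} \nu_\psi\bigl(O_R(o, \gamma o)\bigr) \ge \int_{\Lambda_{\mathsf u, R}} \sum_{\gamma \in \Gamma_{\mathsf u, R}} \mathbbm{1}_{O_R(o, \gamma o)}\, d\nu_\psi = \infty.$$
Finally, the upper bound in the shadow lemma gives $\nu_\psi(O_R(o, \gamma o)) \le e^{\kappa \|\psi\| R} e^{-\psi(\mu(\gamma))}$, so the divergence of the Poincar\'e sum $\sum_{\gamma \in \Gamma_{\mathsf u, R}} e^{-\psi(\mu(\gamma))}$ follows immediately.

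I do not anticipate a serious obstacle here: all the real work has already been done. The zero--one law has been established in Proposition \ref{lem.01}, the shadow estimates are in Lemma \ref{lem.shadow}, and the decomposition of $\Lambda_{\mathsf u}$ as an increasing union of $\Lambda_{\mathsf u, R}$ is immediate from the definitions. The only mildly delicate point is that one must be careful to cite \eqref{same} to convert between the $\mathsf u$ and $\i(\mathsf u)$ versions of the Poincar\'e series, so that a single value of $R$ works for both sums; but this is automatic once one notices that $\mu(\gamma^{-1}) = \i(\mu(\gamma))$ and $\Gamma_{\i(\mathsf u), R} = \Gamma_{\mathsf u, R}^{-1}$.
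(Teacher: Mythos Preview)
Your proof is correct and follows essentially the same route as the paper's: pick $R$ with $\nu_\psi(\Lambda_{\mathsf u,R})>0$, use that every point of $\Lambda_{\mathsf u,R}$ lies in infinitely many shadows $O_R(o,\gamma o)$ with $\gamma\in\Gamma_{\mathsf u,R}$, and apply the upper bound in the shadow lemma together with \eqref{same}. The only superfluous step is your appeal to Proposition~\ref{lem.01}; the argument needs only $\nu_\psi(\Lambda_{\mathsf u})>0$, not $=1$.
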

\begin{proof} Without loss of generality, we may assume that $\nu_\psi(\La_{\mathsf u})>0$.
Recall that $\La_{\mathsf u}=\cup_{n\in\bb N}\La_{\mathsf u,n}$ 
where
$$
\La_{\u,n}=\bigcap_{m=1}^\infty\bigcup_{
\|\mu (\ga)\| \geq m, \ga\in\Ga_{{\mathsf u},n}} O_n(o,\ga o).
$$

Hence $\nu_\psi(\La_{\mathsf u,n})>0$ for some $n$. Now by Lemma \ref{lem.shadow}, we have for all $m\ge 1$,
$$
0<\nu_\psi(\La_{\mathsf u,n})\leq \sum_{
\substack{
\|\mu(\ga)\| \geq m,\\
\ga\in\Ga_{{\mathsf u},n}}}
\nu_\psi(O_n(o,\ga o))\ll \sum_{
\substack{
\|\mu(\ga)\|\geq m,\\
\ga\in\Ga_{{\mathsf u},n}}}
e^{-\psi(\mu(\ga))}.
$$
Since the implicit constant above is independent of $m$, it follows that the series $\sum_{\ga\in\Ga_{{\mathsf u},n}}e^{-\psi(\mu(\ga))}$ diverges, which implies the claim by \eqref{same}.
\end{proof}

The rest of this section is devoted to the  proof of the following:
\begin{prop}\label{bal}
Suppose that $\m$ is $\mathsf{u}$-balanced as defined in \eqref{bald}.
 If $\sum\limits_{\ga\in\Ga_{{\mathsf u},R}}e^{-\psi(\mu(\ga))}=\infty$ for some $R>0$, then $$\nu_{\psi}(\La_{\mathsf u})=1 =\nu_{\psi\circ \i} (\La_{\i({\mathsf u})}).$$
\end{prop}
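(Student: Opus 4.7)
By the zero-one law (Proposition \ref{lem.01}), each of $\nu_\psi(\La_{\u})$ and $\nu_{\psi \circ \i}(\La_{\i(\u)})$ is either $0$ or $1$, so it suffices to establish positivity of both. The plan is to derive conservativity of the $\{a_t\}$-flow on $(\Ga \ba G/M, \m)$ from the divergence hypothesis together with the $\u$-balanced condition, and then invoke Poincar\'e recurrence in both forward and backward time---translated via the Hopf parametrization---to extract the two positivity statements simultaneously.

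The first main step is the correlation-to-Poincar\'e comparison. Choose a small bounded Borel $\tilde{\cal O} \subset G/M$ of the form $U^+ \times U^- \times I$ in Hopf coordinates, where $U^\pm \subset \cal F$ are small open neighborhoods of points of $\La$ with positive $\nu_\psi$- and $\nu_{\psi \circ \i}$-mass and $I \subset \fa$ is a small bounded neighborhood of $0$; let $\cal O$ be its embedded image in $\Ga \ba G/M$, with $\Omega \cap \inte \cal O \ne \emptyset$. Unfolding the correlation integral along $\Ga$-orbits and using the explicit Hopf form of $\tilde\m$, Lemma \ref{lem.shadow2} (to approximate $\beta_{g^+}(e, \ga)$ by $\mu(\ga)$), and the shadow lemma (Lemma \ref{lem.shadow}), the integrand is concentrated on $\ga \in \Ga_{\u, R}$ for $R$ comparable to the diameter of $\cal O$, yielding
\begin{equation*}
\int_0^T \m(\cal O \cap \cal O a_t)\, dt \;\gtrsim\; \sum_{\substack{\ga \in \Ga_{\u, R}\\ \|\mu(\ga)\| \le CT}} e^{-\psi(\mu(\ga))}.
\end{equation*}
Divergence of the directional Poincar\'e series thus forces $\int_0^\infty \m(\cal O \cap \cal O a_t)\, dt = \infty$, and the $\u$-balanced hypothesis propagates this to every bounded Borel $B \subset \Ga \ba G/M$ with $\Omega \cap \inte B \ne \emptyset$.

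Conservativity is now obtained by contradiction: if $\m$ were not conservative, the Hopf decomposition would yield a wandering $W$ with $\m(W) > 0$, and setting $W_n := \{w \in W : \int_{\br}\mathbbm{1}_W(wa_t)\,dt \le n\}$---which exhausts $W$ mod $\m$---a Fubini computation as in the proof of Proposition \ref{lem.dich} gives $\int_{\br} \m(W_n \cap W_n a_t)\, dt \le n\, \m(W_n) < \infty$. Outer regularity of $\m$ then allows one to approximate $W_n$ by an open $U$ with $\Omega \cap \inte U \ne \emptyset$ on which the integral remains finite, contradicting the divergence just established. Hence $\m$ is conservative, and Poincar\'e recurrence applied to both $\{a_t\}$ and $\{a_{-t}\}$ gives $\m$-a.e.\ forward and backward recurrence. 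Under the Hopf parametrization, forward recurrence of $[g]$ is equivalent to $g^+ \in \La_{\u}$; backward recurrence, via the symmetry $gM \mapsto gw_0 M$ that conjugates $\{a_t\}$ to $\{\exp(-t\i(\u))\}$, is equivalent to $g^- \in \La_{\i(\u)}$. The local product structure $d\m \asymp d\nu_\psi \otimes d\nu_{\psi \circ \i} \otimes db$ and Fubini then yield $\nu_\psi(\La_{\u}) > 0$ and $\nu_{\psi \circ \i}(\La_{\i(\u)}) > 0$, which the zero-one law upgrades to $1$.

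The principal technical obstacle is the correlation-to-Poincar\'e comparison in the second paragraph: the Hopf-coordinate unfolding requires careful tracking of the Busemann cocycle and application of the shadow lemma to localize the sum to the tube $\Ga_{\u, R}$ and to extract the weight $e^{-\psi(\mu(\ga))}$ with the correct constants. A subsidiary issue is the outer-regularity step in the third paragraph, which requires a quantitatively controlled open neighborhood of $W_n$ so as not to inflate the return-time integral beyond finiteness.
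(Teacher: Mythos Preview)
Your overall strategy---establish conservativity of the $\{a_t\}$-flow and then deduce both positivity statements via two-sided Poincar\'e recurrence---is sound in its second half, but the argument for conservativity has a genuine gap at the outer-regularity step, which you yourself flag as a ``subsidiary issue''. It is not subsidiary. Outer regularity controls only $\m(U \setminus W_n)$, not the correlation integral $\int_{\br}\m(U \cap U a_t)\,dt$: any open $U \supset W_n$ may well meet the conservative part of the Hopf decomposition, in which case that integral is automatically infinite and no contradiction arises. More quantitatively, one can bound $\int_0^T \m(U \cap U a_t)\,dt \le n\,\m(W_n) + 2\epsilon T$ when $\m(U\setminus W_n) < \epsilon$, but the constant in the $\u$-balanced inequality relating $\cal O$ and $U$ depends on $U$ and hence on $\epsilon$, so sending $\epsilon \to 0$ gives nothing. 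In short, a first-moment divergence statement for open sets meeting $\Omega$---even when propagated by the balanced hypothesis---does not by itself rule out a dissipative part of positive measure.

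The paper circumvents this by proving a \emph{second-moment} estimate: in addition to the lower bound you sketch (which is essentially \eqref{eq1} of Proposition~\ref{lem.EE}), it establishes the upper bound \eqref{eq2},
\[
\int_0^T\!\!\int_0^T \sum_{\ga,\ga'\in\Ga}\tilde\m(Q_r \cap \ga Q_r a_{-t} \cap \ga' Q_r a_{-t-s})\,dt\,ds \;\ll\; \Bigl(\sum_{\substack{\ga\in\Ga_{\u,4dr}\\\psi(\mu(\ga))\le \delta T}} e^{-\psi(\mu(\ga))}\Bigr)^{2},
\]
and it is here that the balanced hypothesis is actually used, to match the radii in the two bounds (Corollary~\ref{cobal}). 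This yields the R\'enyi-type inequality $\int_0^T\!\int_0^T \M(P_t\cap P_s)\,dt\,ds \ll \bigl(\int_0^T \M(P_t)\,dt\bigr)^2$ for a fixed set $[Q_r]$, and the Aaronson--Sullivan Borel--Cantelli lemma (Lemma~\ref{lem.BC}) then gives directly that a set of positive $\m$-measure recurs infinitely often in forward time---hence $\nu_\psi(\La_\u)>0$, and by the symmetry \eqref{same} also $\nu_{\psi\circ\i}(\La_{\i(\u)})>0$. The key ingredient your proposal lacks is this second-moment bound; without it, divergence of the directional Poincar\'e series cannot be converted into recurrence.
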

Proof of this proposition involves investigating the relation between the $\mathsf{u}$-directional Poincare series
and the correlation function of $\m$ for the $a_t$-action.

\subsection*{Multiplicity of shadows}
\begin{lemma}\label{lem.ovl} 
 For any $R>0$ and $D>0$,  we have
$$
\sup_{T>0} \sum_{ \substack{\ga\in\Ga_{{\mathsf u},R},\\ T\leq \psi(\mu(\ga))\leq T+D} }\mathbbm 1_{O_{R}(o,\ga o)} <\infty.
$$
\end{lemma}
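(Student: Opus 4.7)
The strategy is to show that all the $\gamma o$ contributing to the sum at a fixed point $\xi$ cluster within a bounded region of $X$, so that discreteness of $\Gamma$ yields a uniform bound on their number. I would fix $\xi\in\cal F$ and write $\xi = k^+$ for some $k\in K$; the choice of $k$ is unique up to right multiplication by $M$, which is harmless since $M$ commutes with $A$ and fixes $o$, so the orbit $\{k\exp(t\u)o : t\in\br\}$ is intrinsic to $\xi$. Suppose $\gamma\in\Gamma_{\u,R}$ satisfies both $\xi\in O_R(o,\gamma o)$ and $T \le \psi(\mu(\gamma))\le T+D$.

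By the definition of $\Gamma_{\u,R}$, there exists $t_\gamma\ge 0$ with $\|\mu(\gamma)-t_\gamma\u\|<R$. Applying $\psi$ gives $|\psi(\mu(\gamma)) - t_\gamma\psi(\u)|\le R\|\psi\|$, so $t_\gamma$ lies in an interval $I\subset\br$ of length $\psi(\u)^{-1}(D+2R\|\psi\|)$ whose location depends on $T$ but whose length depends only on $D, R, \psi, \u$. Lemma \ref{lem.f} (applied with $S=R$) then produces $d(k\exp(t_\gamma\u)o,\,\gamma o)\le (2d+1)R$, so $\gamma o$ lies in the $(2d+1)R$-neighborhood $\cal N$ of the segment $\{k\exp(t\u)o : t\in I\}$. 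The set $\cal N$ has diameter at most
$$L := \psi(\u)^{-1}(D+2R\|\psi\|) + 2(2d+1)R,$$
which depends only on $R, D, \psi, \u$ --- crucially, not on $T$ or $\xi$.

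Finally, for any two such elements $\gamma_1,\gamma_2$ one has $d(o,\gamma_1^{-1}\gamma_2 o) \le L$ by left $G$-invariance of the metric, so $\gamma_1^{-1}\gamma_2$ lies in the finite set $\{g\in\Gamma : d(o,go)\le L\}$. Hence the number of $\gamma$'s with $\xi\in O_R(o,\gamma o)$ in the prescribed $\psi$-window is at most $|\{g\in\Gamma : d(o,go)\le L\}|$, uniformly in $T$ and $\xi$. The only subtlety --- and the main thing to be careful about --- is precisely this uniformity: the segment $\{k\exp(t\u)o : t\in I\}$ may wander arbitrarily far from $o$ as $T\to\infty$, but since its \emph{length} is bounded and the Riemannian metric is left $G$-invariant, the discreteness count is insensitive to the segment's location.
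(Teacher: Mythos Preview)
Your proof is correct and follows essentially the same route as the paper's: both arguments use Lemma~\ref{lem.f} to place each contributing $\gamma o$ within distance $(2d+1)R$ of a point $k\exp(t_\gamma\u)o$ on the ray, then use the $\psi$-window condition together with $\psi(\u)>0$ to confine all the $t_\gamma$ to an interval of bounded length, and finally invoke discreteness of $\Gamma$. One minor caveat: the \emph{statement} of Lemma~\ref{lem.f} only promises \emph{some} $t>0$ with $k\exp(t\u)o\in B(\gamma o,(2d+1)R)$, whereas you need this for your specific $t_\gamma$ (the one from the definition of $\Gamma_{\u,R}$); this is exactly what the \emph{proof} of Lemma~\ref{lem.f} shows, so you may want to say so explicitly.
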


\begin{proof}
Suppose that there exist $\ga_1,\cdots,\ga_m\in\Ga_{{\mathsf u},R}$ and $k\in K$ such that $k^+\in \cap_{i=1}^m O_{R}(o,\ga_io)$
and $T\le  \psi(\mu(\ga_i))\le  T+D$.
By Lemma \ref{lem.f},
for all $1\leq i\leq m$, there exists $t_i\geq0$ such that  $ka_{t_i}o\in B(\ga_io,(2d+1)R)$.
Since $\ga_i^{-1}ka_{t_i}\in G_{(2d+1)R}$, we have $\norm{\mu(\ga_i)-t_iu}\leq d(2d+1)R$ by Lemma \ref{com}.
In particular, 
$$
t_i\psi({\mathsf u})\leq \psi(\mu(\ga_i))+\norm{\psi}d(2d+1)R\leq T+D+\norm{\psi}d(2d+1)R,
$$
and similarly 
$$t_i\psi({\mathsf u})\geq T-\norm{\psi}d(2d+1)R.$$
Hence $|\psi({\mathsf u})(t_i-t_1)|<2\norm{\psi}d(2d+1)R+D$.
Note that as $\psi({\mathsf u})>0$, for all $1\le i\le m$,
\begin{align*}
&d(\ga_io,\ga_1o)\leq d(\ga_io,ka_{t_i}o)+d(ka_{t_i}o,ka_{t_1}o)+d(ka_{t_1}o,\ga_1o)\\
&\leq 2d(2d+1)R+|t_i-t_1|\\ &
\leq S:=2d(2d+1)R+(\psi({\mathsf u}))^{-1}(2\norm{\psi}d(2d+1)R+D).
\end{align*}

Since there are only finitely many $\gamma_i o$ in a bounded ball of radius $S$,
it follows that $m$ is bounded above by a constant depending only on $S$. This proves the claim.
\end{proof}

\begin{cor}\label{lem.shell}
For any  large enough $R>0$, we have, for any $D>0$, 
$$
\sup_{T>0}\sum_{\substack{\ga\in\Ga_{{\mathsf u},R},\\ T\le \psi(\mu(\ga))\le T+D } }e^{-\psi(\mu(\ga))}
<\infty.
$$
\end{cor}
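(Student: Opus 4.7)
The plan is to deduce this corollary directly from Lemma \ref{lem.ovl} (bounded multiplicity of shadows in a shell) combined with the shadow lemma (Lemma \ref{lem.shadow}), which converts the measure of a shadow into the exponential factor $e^{-\psi(\mu(\ga))}$.

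First, I would fix $R>0$ large enough that the shadow lemma applies, i.e.\ $R > S_0$, so that there exist constants $c_1, c_2 > 0$ with
\[
c_1 e^{-\psi(\mu(\ga))} \le \nu_\psi(O_R(o,\ga o)) \le c_2 e^{-\psi(\mu(\ga))} \quad \text{for all } \ga \in \Ga.
\]
Fix $D>0$. For each $T>0$, let $\mathcal{A}_T := \{\ga \in \Ga_{\u,R} : T \le \psi(\mu(\ga)) \le T+D\}$, and consider the sum of indicator functions $F_T := \sum_{\ga \in \mathcal{A}_T} \mathbbm{1}_{O_R(o,\ga o)}$ on $\mathcal F$.

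By Lemma \ref{lem.ovl} there is a constant $N = N(R,D) < \infty$, independent of $T$, such that $F_T(\xi) \le N$ for all $\xi \in \mathcal F$. Integrating $F_T$ against the probability measure $\nu_\psi$ and applying the lower bound in the shadow lemma yields
\[
c_1 \sum_{\ga \in \mathcal{A}_T} e^{-\psi(\mu(\ga))} \;\le\; \sum_{\ga \in \mathcal{A}_T} \nu_\psi(O_R(o,\ga o)) \;=\; \int_{\mathcal F} F_T \, d\nu_\psi \;\le\; N.
\]
Taking supremum over $T>0$ gives the desired bound $\sup_T \sum_{\ga \in \mathcal{A}_T} e^{-\psi(\mu(\ga))} \le N/c_1 < \infty$.

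No step should be hard here since both ingredients are already in place; the only thing to verify is that the multiplicity constant in Lemma \ref{lem.ovl} and the shadow-lemma constants are uniform in $T$, which they are by inspection of their statements (they depend only on $R$, $D$, and $\psi$). Thus the corollary is essentially a one-line integration argument, and I would present it as such.
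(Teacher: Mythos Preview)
Your proposal is correct and follows essentially the same approach as the paper: use the lower bound in the shadow lemma (Lemma~\ref{lem.shadow}) to replace $e^{-\psi(\mu(\ga))}$ by a constant times $\nu_\psi(O_R(o,\ga o))$, then invoke the bounded multiplicity from Lemma~\ref{lem.ovl} and integrate against the probability measure $\nu_\psi$. The paper compresses this into one displayed inequality, but the argument is identical to yours.
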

\begin{proof}
By Lemmas \ref{lem.shadow} and \ref{lem.f}, there exists $C=C(\psi)>0$ such that for all $R$ large enough, and any $T>0$,
\begin{align*}
\sum_{\substack{
\ga\in\Ga_{{\mathsf u},R},\\
 T\le \psi(\mu(\ga))\le T+D}}e^{-\psi(\mu(\ga))}&\leq 
 \sum_{\substack{\ga\in\Ga_{{\mathsf u},R},\\T\le \psi(\mu(\ga))\le T+D} }C\cdot 
 \nu_{\psi} (O_{R}(o,\ga o))<\infty
 \end{align*}
by Lemma \ref{lem.ovl}.
\end{proof}

\subsection*{Directional Poincare series}

For $r>0$ and $g\in G$, we set 
\begin{align}\label{eq.K0} 
Q_r&:=G_rA_r=KA_rKA_r,\text{ and }\\
\cal L_r(o,g(o)) &:=\{(h^+,h^-)\in \F^{(2)} : h\in G_{r} \cap  gG_{r}\exp (\br_- \u) \}.\notag
\end{align}
\begin{lem}\label{lem.qr}
For any $r>0$, we have $Q_r\subset G_{2r}$.
\end{lem}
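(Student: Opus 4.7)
The plan is to reduce the claim to the sub-additivity of $\|\mu(\cdot)\|$, which follows from the triangle inequality in the symmetric space $X = G/K$, combined with the $\cal W$-invariance of the norm on $\fa$.

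First, I would take an arbitrary element $g \in Q_r = K A_r K A_r$ and write it as $g = k_1 a_1 k_2 a_2$ with $k_1, k_2 \in K$ and $a_1, a_2 \in A_r$, so that $\|\log a_1\|, \|\log a_2\| \leq r$. By the left and right $K$-invariance of the Cartan projection, $\mu(k_1 a_1 k_2 a_2) = \mu(a_1 k_2 a_2)$, so it suffices to show that $\|\mu(a_1 k_2 a_2)\| \leq 2r$.

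Next, I would use the fact that, under the chosen $K$-bi-invariant Riemannian metric, the distance in $X$ satisfies $d(o, h o) = \|\mu(h)\|$ for every $h \in G$. The triangle inequality in $X$ then gives
\[
\|\mu(h_1 h_2)\| = d(o, h_1 h_2 o) \leq d(o, h_1 o) + d(h_1 o, h_1 h_2 o) = \|\mu(h_1)\| + \|\mu(h_2)\|
\]
for all $h_1, h_2 \in G$. Applying this with $h_1 = a_1$ and $h_2 = k_2 a_2$, and noting that $\|\mu(k_2 a_2)\| = \|\mu(a_2)\|$ by left $K$-invariance, we get $\|\mu(a_1 k_2 a_2)\| \leq \|\mu(a_1)\| + \|\mu(a_2)\|$.

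Finally, I would observe that for any $a \in A$, the element $\mu(a) \in \fa^+$ is the unique $\cal W$-translate of $\log a$ lying in $\fa^+$, and since the inner product on $\fa$ is $\cal W$-invariant, $\|\mu(a)\| = \|\log a\|$. Hence $\|\mu(a_i)\| = \|\log a_i\| \leq r$ for $i = 1, 2$, yielding $\|\mu(a_1 k_2 a_2)\| \leq 2r$, i.e., $g \in G_{2r}$. There is no real obstacle here; the only point to check with care is that $\|\mu(a)\| = \|\log a\|$ holds for $a \in A$, not merely $a \in A^+$, which is immediate from the isometric action of $\cal W$ on $\fa$.
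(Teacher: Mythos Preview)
Your proof is correct and follows essentially the same approach as the paper: both write $g=k_1a_1k_2a_2$ and use the triangle inequality in $X$ (equivalently, sub-additivity of $\|\mu(\cdot)\|$) together with $\|\mu(a_i)\|=\|\log a_i\|\le r$ to conclude $\|\mu(g)\|\le 2r$. The paper's version is slightly more direct, computing $d(go,o)\le d(a_1k_2a_2o,a_1k_2o)+d(a_1k_2o,o)=d(a_2o,o)+d(o,a_1^{-1}o)$, but the content is identical.
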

\begin{proof} 
Let $g\in Q_r$ be arbitrary.
By definition, $g=k_1a_1k_2a_2$ for some $k_1,k_2\in K$ and $a_1,a_2\in A_r$.
Since
\begin{align*}
d(go,o)&= d(a_1k_2a_2o,o)\leq d(a_1k_2a_2o,a_1k_2o)+ d(a_1k_2o,o)\\
&= d(a_2o,o)+ d(o,a_1^{-1}o)<2r,
\end{align*}
the lemma follows.
\end{proof}
The following is the main ingredient of the proof of Proposition \ref{bal}: \begin{prop}\label{lem.EE}
Suppose that $\sum\limits_{\ga\in\Ga_{{\mathsf u},R}}e^{-\psi(\mu(\ga))}=\infty$ for some $R>0$.
If $r$ is large enough, we have the following for any $T>1$:
\be \label{eq2}
\int_0^T\int_0^T\sum_{\ga,\ga'\in\Ga} \tilde{\m}(Q_r\cap \ga Q_ra_{-t}\cap\ga' Q_r a_{-t-s})\,dt\,ds\ll\left(\sum_{\substack{\ga\in\Ga_{{\mathsf u},4d r}\\ \psi(\mu(\ga))\leq \delta T} }e^{-\psi(\mu(\ga))}\right)^{{\hspace{-0.15cm}2}}
;\ee
\be\label{eq1}
\int_0^T\sum_{\ga\in\Ga}\tilde{\m}(
Q_{4r}
\cap \ga 
Q_{4r}
a_{-t})\,dt\gg \sum_{ \substack{\ga\in\Ga_{{\mathsf u},r}\\
 \psi(\mu(\ga))< \delta T}}e^{-\psi(\mu(\ga))}
\ee
where the implied constants are independent of $T$.
\end{prop}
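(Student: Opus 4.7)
The plan is to estimate the BMS measure of intersections $Q_r\cap\ga Q_r a_{-t}$ (and triple analogues) through the Hopf parametrization of $G/M$, reducing everything to shadow-based estimates controlled by Lemmas~\ref{lem.shadow} and~\ref{lem.shadow2}. In Hopf coordinates $(g^+,g^-,b)$ with $b=\beta_{g^+}(e,g)$, the BMS measure factors as $e^{\psi(\cal G)}\,d\nu_\psi(g^+)\,d\nu_{\psi\circ\i}(g^-)\,db$, and since $K\subset Q_r$ the condition $g\in Q_r$ only constrains $b\in F_r\subset\fa_{O(r)}$ (of Lebesgue volume $\asymp r^{\dim\fa}$), leaving $(g^+,g^-)$ free in $\F^{(2)}$. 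Under the left-$\ga$ and right-$a_t$ actions, $b$ shifts by $\beta_{\ga g^+}(e,\ga)$ and by $t\u$ respectively, so the further condition $\ga^{-1}ga_t\in Q_r$ pushes $g^+$ into a Busemann slab
\[
W_\ga^t:=\{\xi\in\F : \beta_\xi(e,\ga)\in t\u+\fa_{O(r)}\}.
\]

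\textbf{Pointwise estimate.} By Lemma~\ref{lem.qr}, $Q_r\subset G_{2r}$, so non-emptiness of $Q_r\cap\ga Q_r a_{-t}$ combined with Lemma~\ref{com} forces $\mu(\ga)\in t\u+\fa_{2dr}$; hence $\ga\in\Ga_{\u,4dr}$ and $|\psi(\mu(\ga))-\delta t|=O(r)$. Fubini against the Hopf disintegration, combined with the fact that $e^{\psi(\cal G)}$ is bounded on the bounded region of $\F^{(2)}$ that carries the integrand, yields
\[
\tilde\m(Q_r\cap\ga Q_r a_{-t})\asymp \nu_\psi(W_\ga^t)
\]
with constants depending only on $r$. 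Conformality $d\nu_\psi=e^{-\psi(\beta_\xi(e,\ga))}\,d(\ga_*\nu_\psi)$, together with $\beta_\xi(e,\ga)\in t\u+O(r)$ on $W_\ga^t$, gives $\nu_\psi(W_\ga^t)\asymp e^{-\delta t}\,\nu_\psi(\ga^{-1}W_\ga^t)$. The identity $\sigma(\ga,\xi)=\beta_{\ga\xi}(e,\ga)$ together with Lemma~\ref{lem.shadow2} show $\ga^{-1}W_\ga^t\supset O_r(\ga^{-1}o,o)=\ga^{-1}O_r(o,\ga o)$, whose $\nu_\psi$-mass is $\asymp 1$ by conformality combined with Lemma~\ref{lem.shadow}. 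Putting these pieces together gives
\[
\tilde\m(Q_r\cap\ga Q_r a_{-t})\lesssim e^{-\psi(\mu(\ga))},
\]
with the matching lower bound $\tilde\m(Q_{4r}\cap\ga Q_{4r}a_{-t})\gtrsim e^{-\psi(\mu(\ga))}$ for $\ga\in\Ga_{\u,r}$ with $\|\mu(\ga)-t\u\|\le r$.

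\textbf{Summation.} For \eqref{eq1}, each $\ga\in\Ga_{\u,r}$ with $\psi(\mu(\ga))<\delta T$ contributes a $t$-window of length $\gtrsim r$ in $[0,T]$ on which the pointwise lower bound applies, so $\int_0^T\tilde\m(Q_{4r}\cap\ga Q_{4r}a_{-t})\,dt\gtrsim e^{-\psi(\mu(\ga))}$; summing over $\ga$ yields \eqref{eq1}. For \eqref{eq2}, the same Hopf analysis on the triple intersection imposes $g^+\in W_\ga^t\cap W_{\ga'}^{t+s}$; the cocycle identity $\beta_{\ga\xi}(e,\ga')=\sigma(\ga,\xi)+\beta_\xi(e,\ga^{-1}\ga')$ combined with the $W_\ga^t$-condition collapses the second constraint into $\ga^{-1}(W_\ga^t\cap W_{\ga'}^{t+s})\subset W_{\ga''}^s$, where $\ga'':=\ga^{-1}\ga'$. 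Applying the single-intersection estimate twice (once for $\ga,t$ and once for $\ga'',s$) gives
\[
\tilde\m(Q_r\cap\ga Q_r a_{-t}\cap\ga'Q_r a_{-t-s})\lesssim e^{-\psi(\mu(\ga))}e^{-\psi(\mu(\ga''))}.
\]
The substitution $(\ga,\ga')\mapsto(\ga,\ga'')$ is a bijection on $\Ga\times\Ga$, the constraints on $\ga$ and $\ga''$ from Step 1 decouple between $t$ and $s$, and Fubini splits the double integral into the square of
\[
\int_0^T\sum_{\ga\in\Ga_{\u,4dr}}e^{-\psi(\mu(\ga))}\mathbbm 1\{\|\mu(\ga)-t\u\|\le 2dr\}\,dt\asymp\sum_{\ga\in\Ga_{\u,4dr},\,\psi(\mu(\ga))\le\delta T}e^{-\psi(\mu(\ga))},
\]
using that for each fixed $\ga$ the inner $t$-interval has length $O(r)$.

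\textbf{Main obstacle.} The crux is the pointwise comparison $\tilde\m(Q_r\cap\ga Q_r a_{-t})\asymp\nu_\psi(W_\ga^t)$: one must verify that the $g^-$-integration against $\nu_{\psi\circ\i}$ and the density factor $e^{\psi(\cal G)}$ contribute only bounded uniform constants, despite the diagonal singularity of $\cal G$, and must track the $O(r)$ slack in every Busemann cocycle identity so that the inclusions $O_r(\ga^{-1}o,o)\subset\ga^{-1}W_\ga^t$ and $\ga^{-1}(W_\ga^t\cap W_{\ga'}^{t+s})\subset W_{\ga''}^s$ hold with explicit constants depending only on $r$.
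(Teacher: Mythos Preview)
Your upper bound \eqref{eq2} is essentially correct and parallels the paper's route (Lemmas~\ref{lem.CC}, \ref{lem.K1}, \ref{lem.K2}): the Busemann-slab description $W_\ga^t$ is just another way of writing the shadow constraint, and the decoupling via $\ga''=\ga^{-1}\ga'$ is exactly what the paper does.

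The genuine gap is in the lower bound \eqref{eq1}. Your claim that ``since $K\subset Q_r$ the condition $g\in Q_r$ only constrains $b\in F_r$, leaving $(g^+,g^-)$ free in $\F^{(2)}$'' is false: under the Hopf map, $K/M$ sits inside $\F^{(2)}\times\{0\}$ as the $K$-orbit of $(e^+,e^-)$, which is a proper submanifold of $\F^{(2)}$; more generally $Q_r$ forces $\|\cal G(g^+,g^-)\|\lesssim r$ by Lemma~\ref{lem.BPS}. Consequently the asserted two-sided bound $\tilde\m(Q_r\cap\ga Q_r a_{-t})\asymp\nu_\psi(W_\ga^t)$ is not established in the $\gtrsim$ direction: knowing $g^+\in W_\ga^t$ does \emph{not} by itself produce a $\nu_{\psi\circ\i}$-positive set of $g^-$ and a $b$-window for which $(g^+,g^-,b)$ lands back in $Q_{4r}\cap\ga Q_{4r}a_{-t}$. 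You flag this as the ``main obstacle'' but never resolve it; as written, your lower bound reduces to the unproven assertion that the $g^-$-integration contributes a uniformly positive constant.

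The paper closes this gap by abandoning the full slab $W_\ga^t$ and instead restricting to the product of shadows $O_{S_0}(o,\ga o)\times O_{S_0}(\ga o,o)$. Lemma~\ref{lem.LR} shows that on this product $\|\cal G(g^+,g^-)\|$ is uniformly bounded (for $\|\mu(\ga)\|$ large), which both controls the density $e^{\psi(\cal G)}$ and, via Lemma~\ref{lem.LB}, guarantees a representative $g\in Q_{2r}$ whose $a_t$-orbit enters $\ga Q_{2r}$ on a unit time-interval. Lemma~\ref{lem.K3} then gives the $b$-integration lower bound, and the shadow lemma applied separately to each factor supplies the $\nu_\psi$- and $\nu_{\psi\circ\i}$-masses. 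Your outline would need precisely this geometric input to go through.
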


\begin{lemma}\label{lem.CC}
If $Q_r\cap\ga Q_ra_{-t}\neq\emptyset$ for $\ga\in\Ga$ and $t, r>0$, then 
$$
\norm{\mu(\ga)-t{\mathsf u}}\leq  4dr.
$$
\end{lemma}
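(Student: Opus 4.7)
The plan is to unpack the nonempty intersection hypothesis into a concrete factorization of $\gamma$ and then apply the displacement estimate of Lemma \ref{com} to compare $\mu(\gamma)$ with $\mu(a_t) = t\mathsf{u}$.

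First I would extract representatives: if $Q_r \cap \gamma Q_r a_{-t} \neq \emptyset$, pick $q_1, q_2 \in Q_r$ with $q_1 = \gamma q_2 a_{-t}$, which rearranges to
\[
\gamma \;=\; q_1 \, a_t \, q_2^{-1}.
\]
Next I need to show that both $q_1$ and $q_2^{-1}$ lie in $G_{2r}$. For $q_1$ this is exactly Lemma \ref{lem.qr}. For $q_2^{-1}$, the point is that $G_{2r} = K A_{2r}^+ K$ is stable under inversion: writing $q_2 = k_1 a^+ k_2$ with $a^+ \in A_{2r}^+$, we have $q_2^{-1} = k_2^{-1} (a^+)^{-1} k_1^{-1}$, and $(a^+)^{-1}$ is conjugate by a Weyl element into $A_{2r}^+$ since the Euclidean norm on $\fa$ is $\cal W$-invariant. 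Hence $q_2^{-1} \in G_{2r}$ as well, and so $\gamma \in G_{2r}\, a_t\, G_{2r}$.

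Finally I would apply Lemma \ref{com} with $R = 2r$ to the element $g = a_t$, noting that $\mu(a_t) = t\mathsf{u}$ since $\mathsf{u} \in \inte \fa^+$ and $t \ge 0$ (the relevant $t$ in the application will be nonnegative because the Cartan projection argument combined with the presence of $Q_r$ forces this up to a bounded error, absorbed into the constant). This yields
\[
\mu(\gamma) \;\in\; t\mathsf{u} + \fa_{2dr},
\]
and hence $\|\mu(\gamma) - t\mathsf{u}\| \le 2dr \le 4dr$. The slack factor of two between $2dr$ and the claimed $4dr$ comfortably absorbs the case $2r \le 1$ (where Lemma \ref{com} is not directly applicable and one replaces $R$ by a fixed constant), as well as the possibility that $\mu(a_t)$ needs an extra $\i$-twist to land in $\fa^+$ for small $t$.

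The argument has essentially no obstacle beyond bookkeeping; the only subtle point is verifying that $G_{2r}$ is inversion-invariant, which boils down to the compatibility of the Cartan projection with inversion via the opposition involution and the $\cal W$-invariance of $\|\cdot\|$.
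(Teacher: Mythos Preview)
Your argument is correct and essentially identical to the paper's: both pick $q_1,q_2\in Q_r$ with $q_1=\gamma q_2 a_{-t}$, use Lemma~\ref{lem.qr} and inversion-stability of $G_{2r}$ to place the flanking factors in $G_{2r}$, and apply Lemma~\ref{com}. The only cosmetic difference is that the paper rearranges to $a_t=q_1^{-1}\gamma q_2$ and applies Lemma~\ref{com} with $g=\gamma$, whereas you write $\gamma=q_1 a_t q_2^{-1}$ and apply it with $g=a_t$; either way one gets $\|\mu(\gamma)-t\mathsf u\|\le 2dr\le 4dr$. Your hedges about $t\ge 0$ and a possible $\i$-twist are unnecessary since the hypothesis already gives $t>0$ and $\mathsf u\in\inte\fa^+$.
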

\begin{proof}
If $Q_r\cap\ga Q_ra_{-t}\neq\emptyset$, there exist $q_0,q_0'\in 
Q_r
$ such that $q_0=\ga q_0'a_{-t}$.
By Lemma \ref{com} and Lemma \ref{lem.qr},
$$
t{\mathsf u}=\mu(a_t)=\mu(q_0^{-1}\ga q_0')\in\mu(\ga)+\fa_{4dr}.
$$
\end{proof}

In order to prove Proposition \ref{lem.EE}, we will  bound the integrals appearing in the lemma from below and above using shadows, and then apply the shadow lemma (Lemma \ref{lem.shadow}).
For this purpose, we observe several relations between the sets defined in \eqref{eq.K0} and shadows.

\begin{lemma}\label{lem.K0}
If $g\in Q_r\cap\ga Q_ra_{-t}$ for $\ga\in \Ga$ and $t, r > 0$, then
\begin{enumerate}
\item
$(g^+,g^-)\in\cal L_r(o,\ga o)$;
\item
$|\psi(\cal G(g^+,g^-))| <2 \norm{\psi}cr$ where $c$ is from  Lemma \ref{lem.BPS};
\item
$[g]A\cap Q_r\cap\ga Q_ra_{-t}\subset [g]
A_{2dr}
$.
\end{enumerate}
\end{lemma}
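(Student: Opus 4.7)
The proof splits naturally into the three assertions, each reducing to a direct computation that combines the decomposition $Q_r = G_r A_r$ with Lemmas \ref{lem.qr}, \ref{com}, and \ref{lem.BPS}. Throughout, the key structural facts are that $A$ is abelian (so $A$-factors can be freely commuted), $A$ fixes both $e^+$ and $e^-$ (so boundary points are $A$-invariant), and $a_{-t} = \exp(-t\u) \in \exp(\br_- \u)$.

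\textbf{For (1),} the plan is to exhibit a representative $h$ in the $A$-orbit of $g$ (which has the same pair of boundary points as $g$) lying simultaneously in $G_r$ and in $\ga G_r \exp(\br_- \u)$. The hypothesis yields decompositions $g = h_1 b_1$ with $h_1 \in G_r$, $b_1 \in A_r$, and $\ga^{-1} g a_t = h_2 b_2$ with $h_2 \in G_r$, $b_2 \in A_r$. Setting $h := h_1 = g b_1^{-1}$, one has $(h^+, h^-) = (g^+, g^-)$ and $h \in G_r$. Using that $A$ commutes with itself, one rewrites
\[
\ga^{-1} h = h_2 b_2 a_{-t} b_1^{-1} = h_2 (b_2 b_1^{-1}) a_{-t},
\]
which lies in $\ga G_r \exp(\br_- \u)$ after absorbing the $A_{2r}$-factor $b_2 b_1^{-1}$ via an $A$-translation that preserves $(h^+, h^-)$ (possibly at the cost of a harmless constant adjustment of $r$). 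This places $(g^+, g^-) \in \cal L_r(o, \ga o)$.

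\textbf{For (2),} Lemma \ref{lem.qr} gives $g \in Q_r \subset G_{2r}$, so $d(o, go) \leq 2r$ and a fortiori $d(o, gAo) \leq 2r$. Applying the upper bound in Lemma \ref{lem.BPS} yields $\|\cal G(g^+, g^-)\| \leq c \cdot d(o, gAo) \leq 2cr$, and linearity gives $|\psi(\cal G(g^+, g^-))| \leq 2\|\psi\| c r$ immediately.

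\textbf{For (3),} suppose $ga \in Q_r \cap \ga Q_r a_{-t}$ with $a \in A$. Decompose $g = h_1 b_1$ and $ga = h_3 b_3$ with $h_i \in G_r$ and $b_i \in A_r$. Then $h_1^{-1} h_3 = b_1 a b_3^{-1}$ lies in $A$, while $h_1^{-1} h_3 \in G_r \cdot G_r$. Lemma \ref{com} applied with $g = e$ gives $\mu(G_r G_r) \subset \fa_{dr}$, hence $\|\log(b_1 a b_3^{-1})\| \leq dr$ and thus $\|\log a\| \leq dr + \|\log b_1\| + \|\log b_3\| \leq (d+2)r \leq 2dr$ (using $d \geq 2$), placing $a \in A_{2dr}$.

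The only subtle point is (1): one must take care when choosing the representative in the $A$-orbit of $g$ to meet both containment conditions, balancing the $A$-factors that appear in the $Q_r$-decompositions against the $a_{-t}$ direction of flow. Parts (2) and (3) are then routine bookkeeping with the structural lemmas already established.
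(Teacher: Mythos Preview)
Your arguments for (2) and (3) are correct and close to the paper's. For (3) the paper is slightly slicker: instead of decomposing $g=h_1b_1$ and $ga=h_3b_3$, it invokes the inclusion $Q_r\subset G_{2r}$ (Lemma~\ref{lem.qr}) to get $g,\,ga\in G_{2r}$, hence $a=g^{-1}(ga)\in G_{2r}G_{2r}$, and then Lemma~\ref{com} with $R=2r$ gives $\|\log a\|=\|\mu(a)\|\le 2dr$. Your route via $h_1^{-1}h_3\in G_rG_r$ is a valid variant.

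Your argument for (1), however, has a real gap. The element $b_2b_1^{-1}\in A_{2r}$ need not lie on the one-parameter subgroup $\exp(\br\u)$, so it cannot be ``absorbed'' into the factor $\exp(\br_-\u)$; and if instead you right-translate $h$ by an element of $A$ to cancel $b_2b_1^{-1}$, then $h$ will in general leave $G_r$. (Concretely, in a rank-two situation with $\u$ along the first coordinate, one can arrange $\ga^{-1}h_1$ to have second $\fa$-coordinate of size $>r$, which no choice of $s$ in $a_{-s}$ can reduce.) Your parenthetical ``harmless constant adjustment of $r$'' is the honest concession: what the argument actually yields is $(g^+,g^-)\in\cal L_{cr}(o,\ga o)$ for some $c>1$, not $\cal L_r(o,\ga o)$ as asserted.

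The paper's proof of (1) is the single phrase ``immediate from the definition'', and the intended argument is simply to take $h=g$: by Lemma~\ref{lem.qr}, $g\in Q_r\subset G_{2r}$ and $g\in\ga Q_r a_{-t}\subset\ga G_{2r}a_{-t}\subset\ga G_{2r}\exp(\br_-\u)$, so $g$ itself witnesses the membership --- provided one reads the definition of $\cal L_r$ with $G_{2r}$ in place of $G_r$. That reading is consistent with how $\cal L_r$ is actually used: the proof of Lemma~\ref{lem.UI} only uses $ho\in B(o,2r)$ and $ha_t o\in B(\ga o,2r)$. So the clean argument is not the decomposition $g=h_1b_1$ you attempted, but $h=g$ together with $Q_r\subset G_{2r}$.
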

\begin{proof}
$(1)$ is immediate from the definition of $\cal L_r(o,\ga o)$.
Since $g\in Q_r$, $go\in B(o,2r)$ and hence
$\norm{\cal G(g^+,g^-)}<2cr$ by Lemma \ref{lem.BPS} and $(2)$ follows.
$(3)$ follows from the stronger inclusion $gA\cap Q_r\subset g
A_{2dr}
$ which follows from Lemma \ref{com} and Lemma \ref{lem.qr}.
\end{proof}
\begin{lemma}\label{lem.UI}
For any $g\in G$ and $r>0$, we have 
$$\cal L_r(o,g(o))\subset O_{4r}(o,g(o))\times O_{4r}(g(o),o).$$
\end{lemma}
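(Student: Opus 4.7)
\emph{Proof plan.} Unpacking $(h^+, h^-) \in \cal L_r(o, g(o))$: $h \in G_r$ yields $d(ho, o) \leq r$, and $h = g g' a_{-s}$ with $g' \in G_r$, $s \geq 0$ yields $d(h a_s o, go) \leq r$ (via $ha_s = gg'$). I will prove $h^+ \in O_{4r}(o, g(o))$ and $h^- \in O_{4r}(g(o), o)$ separately.

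For the first inclusion, take the Iwasawa decomposition $h = k a_h n_h$ with $n_h \in N$; since $a_h n_h \in P$ we have $h^+ = kP$, so $k$ is a valid $K$-representative and the task reduces to finding $b \in A^+$ with $d(kbo, go) \leq 4r$. Commuting $a_s$ past $n_h$ yields $h a_s = k(a_h a_s) \tilde n$ where $\tilde n := a_s^{-1} n_h a_s \in N$. I take $b := a_h a_s$, which lies in $A^+$ whenever $s$ exceeds a threshold of order $\|\log a_h\|$ (the complementary small-$s$ case forces $d(go, o) = O(r)$ and is handled directly by $b = e$). Left $G$-invariance gives $d(kbo, h a_s o) = d(o, \tilde n o)$, and since $s \geq 0$ with $\u \in \inte\fa^+$, conjugation by $a_s^{-1}$ contracts $N = N^-$, so $d(\tilde n o, o) \leq d(n_h o, o)$. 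A direct computation with the Iwasawa cocycle gives $\beta_{h^+}(e, h) = \log a_h$, and the basic Busemann bound $\|\beta_\xi(e, h)\| \leq d(e, h)$ then yields $\|\log a_h\| \leq r$; using $n_h = a_h^{-1} k^{-1} h$ and left invariance,
\[
d(n_h o, o) \leq d(n_h o, a_h^{-1} o) + \|\log a_h\| = d(ho, o) + \|\log a_h\| \leq 2r,
\]
so we conclude
\[
d(kbo, go) \leq d(kbo, ha_s o) + d(ha_s o, go) \leq 2r + r = 3r \leq 4r.
\]

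For the second inclusion, apply the symmetric argument to $h^\vee := h w_0$. Using $w_0^{-1} a_{-s} w_0 = \exp(s \i(\u))$ with $\i(\u) \in \inte\fa^+$, we get $h^\vee = g(g' w_0)\exp(s \i(\u))$, so $h^\vee \in G_r$ and $h^\vee \exp(-s \i(\u)) \in g G_r$; since $(h^\vee)^+ = h^-$, the Iwasawa decomposition of $h^\vee$ with respect to the opposite horocyclic subgroup $N^+$, together with the parallel estimates using direction $\i(\u)$, produces $f = g k^\vee$ with $fo = go$, $f^+ = h^-$, and $fA^+ o$ intersecting $B(o, 4r)$, establishing $h^- \in O_{4r}(g(o), o)$. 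The main obstacle is pinning down the explicit constant $2r$ in the displacement bound $d(n_h o, o) \leq 2r$ (rather than a generic $Cr$), which rests on the identification $\beta_{h^+}(e, h) = \log a_h$ coming from the $\fa$-valued Iwasawa cocycle; once this is in place, the rest is triangle-inequality bookkeeping.
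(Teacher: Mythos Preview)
Your Iwasawa-plus-contraction route is genuinely different from the paper's. The paper writes $o=ha_0n_0o$ with $a_0n_0\in AN$ and invokes a single geometric fact \cite[1.6.6(4)]{Eb}: the Hausdorff distance between the Weyl chambers $a_0n_0A^+o$ and $A^+o$ equals $d(a_0n_0o,o)=d(o,ho)$. Since $ha_to\in hA^+o$, this produces $q'\in ha_0n_0A^+o$ with $d(q',ha_to)<2r$; as $(ha_0n_0)^+=h^+$ and $(ha_0n_0)o=o$, one triangle inequality gives $h^+\in O_{4r}(o,go)$. This sidesteps entirely the question of whether a specific $A$-element lands in $A^+$.

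Your argument has a real gap at the stated constant $4r$. The element $b=a_ha_s$ lies in $A^+$ only when $\log a_h+s\u\in\fa^+$, i.e.\ when $s\ge C_\u\|\log a_h\|$ with $C_\u=\max_\alpha\|\alpha\|/\alpha(\u)$ over the simple roots; this threshold blows up as $\u$ approaches $\partial\fa^+$. In the small-$s$ fallback you take $b=e$, obtaining $d(o,go)\le 2r+s\le(2+C_\u)r$, which is $O(r)$ but not $4r$. A clean repair: let $\log b$ be the orthogonal projection of $\log a_h+s\u$ onto the closed cone $\fa^+$; since $s\u\in\fa^+$ and $\|\log a_h\|\le r$ this projection moves by at most $r$, so $d(kbo,ka_ha_so)\le r$, and chaining with $d(ka_ha_so,ha_so)=d(o,\tilde no)\le 2r$ and $d(ha_so,go)\le r$ now gives exactly $4r$. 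Two further steps also deserve more than a phrase: the inequality $\|\log a_h\|\le d(o,ho)$ is not literally a ``basic Busemann bound'' in higher rank---it follows from Kostant convexity, or from the CAT(0) convexity of $t\mapsto d(a_hn_h\exp(t\u)o,\exp(t\u)o)$ together with its limit $\|\log a_h\|$ as $t\to+\infty$---and the contraction estimate $d(\tilde no,o)\le d(n_ho,o)$ needs the analogous convexity argument for $t\mapsto d(n_h\exp(t\u)o,\exp(t\u)o)$.
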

\begin{proof} Let $(h^+,h^-)\in\cal L_r(o,g(o))$; so $h\in B(o, 2r)$ such that
$ha_to\in B(g(o),2r)$ for some $t\ge 0$.
Write $o=ha_0n_0o$ for some $a_0n_0\in AN$.
Since the Hausdorff distance between $a_0n_0A^+o$ and $A^+o$ is $d(a_0n_0o,o)$ \cite[1.6.6 (4)]{Eb}, we can find $q'\in 
ha_0n_0A^+o$ such that $d(q',h a_t o)<d(ha_0n_0o,ho)<2r$.
Hence, $d({g(o)},q')<d({g(o)},ha_to)+d(ha_to,q')<4r$ and it follows that $h^+\in O_{4r}(o,g(o))$.
A similar argument shows that $h^-\in O_{4r}(g(o),o)$.
\end{proof}

\begin{lemma}\label{lem.K1}
For all large enough $r>1$, we have for any $t>1$, 
$$\tilde{\m}(Q_r\cap\ga Q_ra_{-t})
\ll e^{-\psi(\mu(\ga))}$$
where the implied constant is independent of $t>1$.
\end{lemma}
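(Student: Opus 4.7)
The plan is to decompose the intersection in the Hopf parametrization $G/M\simeq\F^{(2)}\times\fa$ and to estimate the visual and Lebesgue coordinates separately, then combine with the shadow lemma. For any $g\in Q_r\cap\ga Q_r a_{-t}$, Lemma \ref{lem.K0}(1) confines $(g^+,g^-)$ to $\cal L_r(o,\ga o)$; Lemma \ref{lem.K0}(2) bounds the density factor $e^{\psi(\cal G(g^+,g^-))}$ uniformly by $e^{2\|\psi\|cr}$; and Lemma \ref{lem.K0}(3) confines the $A$-fiber of the intersection to $[g]A_{2dr}$, whose image in $\fa$ has Lebesgue measure $\op{Vol}(\fa_{2dr})$, a constant depending only on $r$. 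Combining these with the explicit form of $d\tilde{\m}$ and then Lemma \ref{lem.UI} yields
\[
\tilde{\m}(Q_r\cap\ga Q_r a_{-t})\ \ll_r\ \nu_\psi(O_{4r}(o,\ga o))\cdot\nu_{\psi\circ\i}(O_{4r}(\ga o,o)),
\]
with an implicit constant independent of $t$ and $\ga$. For $r$ large enough, the first factor is $\ll e^{-\psi(\mu(\ga))}$ directly from the shadow lemma (Lemma \ref{lem.shadow}).

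For the second (backward) factor, I would use the $G$-equivariance $O_{4r}(\ga o,o)=\ga O_{4r}(o,\ga^{-1}o)$ from \eqref{eq.eqv0} together with the $(\Ga,\psi\circ\i)$-conformality of $\nu_{\psi\circ\i}$:
\[
\nu_{\psi\circ\i}\bigl(\ga O_{4r}(o,\ga^{-1}o)\bigr)=\int_{O_{4r}(o,\ga^{-1}o)} e^{(\psi\circ\i)(\beta_\eta(e,\ga^{-1}))}\,d\nu_{\psi\circ\i}(\eta).
\]
Lemma \ref{lem.shadow2} bounds the exponent on the integration domain uniformly by $(\psi\circ\i)(\mu(\ga^{-1}))+O_r(1)=\psi(\mu(\ga))+O_r(1)$, using $\mu(\ga^{-1})=\i(\mu(\ga))$ and $\i\circ\i=\op{id}$. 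Applying the shadow lemma to $\nu_{\psi\circ\i}$ at $\ga^{-1}$ then gives $\nu_{\psi\circ\i}(O_{4r}(o,\ga^{-1}o))\ll e^{-\psi(\mu(\ga))}$, and the two exponentials cancel, leaving an $O_r(1)$ bound on the backward factor. Combining yields $\tilde{\m}(Q_r\cap\ga Q_r a_{-t})\ll e^{-\psi(\mu(\ga))}$ uniformly in $t>1$, as required.

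The only delicate point is this backward-shadow calculation: since the shadow lemma is stated for shadows based at $o$, the factor $\nu_{\psi\circ\i}(O_{4r}(\ga o,o))$ must first be converted via $\Ga$-equivariance, and the cancellation between the Radon-Nikodym factor and the shadow-lemma bound is precisely what forces the companion measure in the BMS construction to be $(\Ga,\psi\circ\i)$-conformal rather than $(\Ga,\psi)$-conformal. Notice also that no ergodicity or conservativity hypothesis is invoked here, so the uniformity in $t$ is automatic; the rest is a routine Hopf-coordinate computation with two applications of the shadow lemma.
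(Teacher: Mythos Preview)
Your proof is correct and follows essentially the same route as the paper: decompose in Hopf coordinates, invoke Lemma~\ref{lem.K0} to control the density and the $A$-fiber, use Lemma~\ref{lem.UI} to land in a product of shadows, and finish with the shadow lemma. The only difference is that your treatment of the backward factor $\nu_{\psi\circ\i}(O_{4r}(\ga o,o))$ is more elaborate than necessary: the paper simply drops this factor using $\nu_{\psi\circ\i}(O_{4r}(\ga o,o))\le 1$ (it is a probability measure), whereas you run the equivariance-plus-shadow-lemma argument to show it is $O_r(1)$. Your computation is correct and indeed illustrates why $\psi\circ\i$ is the right companion, but for this lemma the trivial bound suffices.
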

\begin{proof}
If $r$ is large enough, we get by Lemma \ref{lem.UI}, Lemma \ref{lem.shadow} and Lemma \ref{lem.K0}:
\begin{align*}
&\tilde{\m}(Q_r\cap\ga Q_ra_{-t})\\
&=\int \mathbbm{1}_{Q_r\cap\ga Q_ra_{-t}}([{gb}])e^{\psi(\mathcal G(g^+,g^-))}d\nu_{\psi}(g^+)d\nu_{\psi\circ \i}(g^-) \,db\\
&=\int_{\cal L_r(o,\ga o)}\left(\int _A\mathbbm{1}_{Q_r\cap\ga Q_ra_{-t}}([gb])\,db\right)e^{\psi(\cal G(g^+,g^-))}d\nu_{\psi}(g^+)d\nu_{\psi\circ \i}(g^-)\\
&\leq \nu_\psi(O_{4r}(o,\ga o))\op{Vol}(
A_{2dr}
) e^{2\norm{\psi}cr}\\
&\ll  e^{-\psi(\mu(\ga))}.
\end{align*}
\end{proof}

\begin{lemma}\label{lem.K2}
If $Q_r\cap \ga Q_ra_{-t}\cap\ga' Q_ra_{-t-s}\neq\emptyset$ for $\ga,\ga'\in \Ga$ and $r, t, s>0$, then
\begin{enumerate}
\item
$\norm{\mu(\ga)-t{\mathsf u}},\,\norm{\mu(\ga^{-1}\ga')-s\u},\,\norm{\mu({\ga'})-(t+s)\u}\leq 4dr;$\\ 
\item
$\psi(\mu(\ga))+\psi(\mu(\ga^{-1}\ga'))\leq \psi(\mu(\ga'))+12dr\norm{\psi}$.
\end{enumerate}
\end{lemma}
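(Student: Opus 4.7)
The plan is to fix a point $g$ in the triple intersection and show each estimate in (1) by invoking Lemma \ref{lem.CC} (or a minor variant of its argument), then to deduce (2) from (1) by linearity of $\psi$.

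For (1), choose $q_0, q_1, q_2 \in Q_r$ such that
\[
g = q_0 = \ga q_1 a_{-t} = \ga' q_2 a_{-(t+s)}.
\]
The bound $\norm{\mu(\ga)-t\u} \le 4dr$ is immediate from Lemma \ref{lem.CC} applied to the nonempty intersection $Q_r\cap \ga Q_r a_{-t}$. Similarly, $\norm{\mu(\ga')-(t+s)\u} \le 4dr$ follows by applying Lemma \ref{lem.CC} to $Q_r\cap \ga' Q_r a_{-(t+s)}$. For the middle estimate, the equality $\ga q_1 a_{-t} = \ga' q_2 a_{-(t+s)}$ rearranges, using $a_{-t}a_{t+s}=a_s$, to
\[
\ga^{-1}\ga' \;=\; q_1\, a_s\, q_2^{-1}.
\]
By Lemma \ref{lem.qr}, both $q_1$ and $q_2^{-1}$ lie in $G_{2r}$, and then Lemma \ref{com} yields $\mu(\ga^{-1}\ga') \in \mu(a_s) + \fa_{2dr} \subset s\u + \fa_{4dr}$, which is the remaining bound in (1).

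For (2), since $\psi \in \fa^*$ is linear, the $\u$-contributions cancel:
\[
\psi(\mu(\ga))+\psi(\mu(\ga^{-1}\ga'))-\psi(\mu(\ga')) \;=\; \psi\bigl((\mu(\ga)-t\u)+(\mu(\ga^{-1}\ga')-s\u)-(\mu(\ga')-(t+s)\u)\bigr).
\]
Bounding the right-hand side in absolute value by $\norm{\psi}$ times the sum of the three norms controlled by (1) gives $3 \cdot 4dr \cdot \norm{\psi} = 12dr\,\norm{\psi}$, which is exactly (2).

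I view this lemma as a cocycle-type enhancement of Lemma \ref{lem.CC}, and I do not anticipate a serious obstacle. The only point requiring care is the algebraic manipulation that isolates $\ga^{-1}\ga' = q_1 a_s q_2^{-1}$ with $a_s$ (not $a_{t+s}$ or $a_{-t}$) sandwiched between two elements of $Q_r$, so that the $Q_r \subset G_{2r}$ bound combines cleanly with Lemma \ref{com} at the single argument $a_s$; after that, both assertions reduce to off-the-shelf applications of Lemmas \ref{com} and \ref{lem.qr} together with the linearity of $\psi$.
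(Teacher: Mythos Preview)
Your proof is correct and follows essentially the same approach as the paper. The only cosmetic difference is that for the middle bound $\norm{\mu(\ga^{-1}\ga')-s\u}\le 4dr$, the paper simply observes that $q_1\in Q_r\cap \ga^{-1}\ga' Q_r a_{-s}$ (which is equivalent to your rearrangement $\ga^{-1}\ga' = q_1 a_s q_2^{-1}$) and then invokes Lemma~\ref{lem.CC} as a black box, whereas you unwind the computation directly via Lemmas~\ref{lem.qr} and~\ref{com}; part (2) is handled identically in both.
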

\begin{proof}
Note that from the hypothesis, the intersections
$$
Q_r\cap \ga Q_ra_{-t},\text{ }Q_r\cap \ga^{-1}\ga' Q_ra_{-s},\text{ }Q_r\cap \ga' Q_ra_{-t-s}
$$
are all nonempty.
By Lemma \ref{lem.CC}, we obtain (1).

(2) follows since
\begin{align*}
    &|\psi(\mu(\ga))+\psi(\mu(\ga^{-1}\ga'))-\psi(\mu(\ga'))|\\
    &=|\psi(\mu(\ga)-t{\mathsf u})+\psi(\mu(\ga^{-1}\ga')-s\u)-\psi(\mu(\ga')-(t+s)\u)|\\
    &\leq 4dr\norm{\psi}+4dr\norm{\psi}+4dr\norm{\psi}=12dr\norm{\psi}.
\end{align*}
\end{proof}

 \subsection*{Proof of \eqref{eq2} in Proposition \ref{lem.EE}} Fix $s,t>0$.
 Let $r$ be large enough so that $\sum\limits_{\ga\in\Ga_{{\mathsf u},4dr }}e^{-\psi(\mu(\ga))}=\infty$.
In the following proof, the notation ${\overset{\prime \prime}{\sum}}$ means the sum
over all $(\ga,\ga')\in \G_{{\mathsf u}, 4dr}\times \G$ {satisfying: \begin{align*}
\ga^{-1}\ga'&\in\Ga_{{\mathsf u},4dr};\\
\psi(\mu(\ga))&\in (\delta t-4dr\|\psi\|,\delta t+4dr\|\psi\|); \; and\\
\psi(\mu(\ga^{-1}\ga'))&\in (\delta s-4dr\|\psi\|,\delta s+4dr\|\psi\|).\end{align*}}
Note that
\begin{align*}
&\sum_{\ga,\ga'\in\Ga}\tilde{\m}(Q_r\cap \ga Q_ra_{-t}\cap\ga' 
Q_r a_{-t-s})\\
=&\sum'' \tilde{\m}(Q_r\cap \ga Q_ra_{-t}\cap\ga' Q_ra_{-t-s})\, \text{by Lemma \ref{lem.K2}(1)} \\
\ll & \sum''   e^{-\psi(\mu(\ga'))}\qquad \text{by Lemma \ref{lem.K1}}\\
\ll &\sum'' e^{-\psi(\mu(\ga))}e^{-\psi(\mu(\ga^{-1}\ga'))}\qquad \text{by Lemma \ref{lem.K2}(2)} \\
\ll &  \left(\sum_{\substack{\ga\in\Ga_{{\mathsf u},4dr},\\\psi(\mu(\ga))\in (\delta t-c_0,\delta t+c_0)}
}e^{-\psi(\mu(\ga))}\right) \left(\sum_{\substack{\ga'\in\Ga_{{\mathsf u},4dr},\\ \psi(\mu(\ga'))\in (\delta s-c_0,\delta s+c_0)}}e^{-\psi(\mu(\ga'))}\right)
\end{align*}
 where $c_0=4dr\|\psi\|$.

Let $I_\ga$ denote the interval $\delta^{-1} [\psi(\mu(\ga))-c_0, \psi(\mu(\ga))+c_0]$. Note that $I_\ga\cap [0,T]\ne \emptyset$ implies that $\psi(\mu(\ga))\le \delta T+c_0$.
Hence
\begin{align*} &\int_0^T \left(\sum_{\substack{\ga\in\Ga_{{\mathsf u},4dr},\\\psi(\mu(\ga))\in (\delta t-c_0,\delta t+c_0)}} e^{-\psi(\mu(\ga))}\right) dt \\ 
&= \sum_{\ga\in\Ga_{{\mathsf u},4dr}} e^{-\psi(\mu(\ga))}  \int_0^T 
\mathbbm 1_{I_\ga} (t)  dt\ll
\sum_{\substack{\ga\in\Ga_{{\mathsf u},4dr},\\\psi(\mu(\ga))\le \delta T+c_0}} e^{-\psi(\mu(\ga))}.\end{align*}
Putting these two together {along} with Corollary \ref{lem.shell}, {used in order to remove $c_0$, concludes} the proof of \eqref{eq2}.
\qed

\begin{lem}\label{lem.LR}
For any $S>0$
and $r>0$, there exists $0<\ell(S,r)<\infty$ such that
for any $\ga\in \G$ with $\|\mu(\ga)\|>\ell(S,r)$, any point $(\xi,\eta)\in O_S(o,\ga o)\times O_S(\ga o,o)$ satisfies $\norm{\cal G(\xi,\eta)}<$
$\ell(S,r)$.
\end{lem}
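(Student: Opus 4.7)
\emph{Plan.} The plan is to combine the refined shadow lemma (Lemma \ref{lem.shadow2}) with a compactness argument that uses the continuity of the Gromov product on $\cal F^{(2)}$.

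First, I would obtain quantitative Busemann estimates at both $\xi$ and $\eta$. Direct application of Lemma \ref{lem.shadow2} to $\xi \in O_S(o,\ga o)$ yields $\|\beta_\xi(e,\ga)-\mu(\ga)\| \le \kappa S$. For $\eta$, the $G$-equivariance $O_S(\ga o,o) = \ga\cdot O_S(o,\ga^{-1}o)$ from \eqref{eq.eqv0} shows $\ga^{-1}\eta \in O_S(o,\ga^{-1}o)$, so Lemma \ref{lem.shadow2} applied to $\ga^{-1}$ gives $\|\beta_{\ga^{-1}\eta}(e,\ga^{-1}) - \mu(\ga^{-1})\| \le \kappa S$. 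The cocycle identity $\beta_{\ga^{-1}\eta}(e,\ga^{-1}) = -\beta_\eta(e,\ga)$ (an immediate consequence of $\sigma(\ga\ga^{-1},\eta)=0$) together with $\mu(\ga^{-1}) = \i\,\mu(\ga)$ translates this into $\|\beta_\eta(e,\ga) + \i\,\mu(\ga)\| \le \kappa S$, whence $\|\beta_\xi(e,\ga) + \i\,\beta_\eta(e,\ga)\| \le 2\kappa S$ since $\i^2 = \op{id}$.

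Second, I would argue by contradiction. Suppose a sequence $\ga_n \in \G$ with $\|\mu(\ga_n)\| \to \infty$ and $(\xi_n,\eta_n) \in O_S(o,\ga_n o) \times O_S(\ga_n o,o)$ satisfied $\|\cal G(\xi_n,\eta_n)\| \to \infty$. Extract a subsequence along which the Cartan decompositions $\ga_n = k_{1,n} \exp(\mu(\ga_n)) k_{2,n}$ have $k_{1,n} \to k_\infty \in K$ and $\mu(\ga_n)/\|\mu(\ga_n)\|$ converging to some unit vector $\u_\infty \in \fa^+$. The flat $k_{1,n}Ao$ contains both $o$ and $\ga_n o$ and has Weyl chambers at infinity $\ga_n^+ = k_{1,n}P$ and $\ga_n^- = k_{1,n}w_0 P$. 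The Busemann estimates above, together with the standard identity $\beta_{k_{1,n}P}(e,\ga_n) = \mu(\ga_n)$, force $\xi_n \to k_\infty P$ and $\eta_n \to k_\infty w_0 P$. The limit pair $(k_\infty P, k_\infty w_0 P) = k_\infty \cdot (e^+,e^-)$ lies in $\cal F^{(2)}$, so by Lemma \ref{lem.F2} the pairs $(\xi_n,\eta_n)$ eventually lie in $\cal F^{(2)}$, and continuity of $\cal G$ on $\cal F^{(2)}$ forces $\cal G(\xi_n,\eta_n) \to \cal G(k_\infty P, k_\infty w_0 P)$, a finite value, contradicting divergence.

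The main obstacle is handling limits in which $\u_\infty$ lies on a wall of $\fa^+$, since then the shadows may fail to concentrate around $\ga_n^\pm$ and the limit pair need not lie in $\cal F^{(2)}$. This is presumably the role of the parameter $r$ in the lemma's statement: reading it as a tube constraint $\ga \in \G_{\u,r}$ for a fixed regular direction $\u \in \inte\fa^+$ (the notation used throughout Section \ref{sec:DP}) forces $\ga_n \to \infty$ regularly so that $\u_\infty$ is automatically regular and Lemma \ref{lem.F2} applies; the threshold $\ell(S,r)$ then absorbs both the required lower bound on $\|\mu(\ga)\|$ and the resulting bound on $\|\cal G(\xi,\eta)\|$.
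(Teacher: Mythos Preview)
Your contradiction argument is the paper's proof, and your diagnosis of the role of $r$ is correct: the paper's own proof begins with ``there exists a sequence $\ga_i\to\infty$ in $\Ga_{\u,r}$,'' uses the tube constraint to obtain regularity, extracts $k_i\to k_0$, and then invokes Lemma~\ref{lem.F2} together with continuity of $\cal G$ on $\cal F^{(2)}$ to reach a contradiction.

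Your first paragraph, however, is an unnecessary detour that also inserts an unjustified step. The quantity $\beta_\xi(e,\ga)+\i\bigl(\beta_\eta(e,\ga)\bigr)$ that you bound by $2\kappa S$ is \emph{not} $\cal G(\xi,\eta)$: the Gromov product is computed with some $g$ satisfying $(g^+,g^-)=(\xi,\eta)$, and $\ga$ has no reason to satisfy this. More seriously, the inference that these Busemann estimates ``force $\xi_n\to k_\infty P$ and $\eta_n\to k_\infty w_0 P$'' does not follow---closeness of Busemann values at $\ga_n$ does not by itself locate points in $\cal F$. The paper sidesteps all of this: it obtains $\xi_i\to k_0^+$ and a subsequential limit $\eta_i\to\eta_0$ directly from compactness and \cite[Lem.~5.6]{LO} (the source of Lemma~\ref{lem.F2}), which already guarantees $(k_0^+,\eta_0)\in\cal F^{(2)}$ without ever identifying $\eta_0$. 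Drop the Busemann paragraph and replace your convergence claim with this compactness extraction, and you recover the paper's argument verbatim.
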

\begin{proof}
Suppose not.
Then there exists a sequence $\ga_i\to\infty$ in 
$\Ga_{\mathsf{u},r}$ 
and $(\xi_i,\eta_i)\in O_S(o,\ga_io)\times O_S(\ga_i o,o)$ such that $\norm{\cal G(\xi_i,\eta_i)}\to\infty$.

We may write $\ga_i=k_ia_i\ell_i$ in $KA^+K$ decomposition, and assume that $k_i\to k_0$ after passing to a subsequence.
It follows that $\xi_i\to k_0^+$ and $\eta_i\to\eta_0$ for some $\eta_0\in \cal F$ such that $(k_0^+,\eta_0)\in\cal F^{(2)}$
as $\ga_i\to\infty$ regularly, by Lemma \ref{lem.F2}.
Hence $\lim_{i\to \infty} \norm{\cal G(\xi_i,\eta_i)}= \norm{\cal G(k_0^+,\eta_0)}<\infty$, which is a contradiction.
\end{proof}

In the following, we fix a large number $S_0$ which satisfies Lemma \ref{lem.shadow}. For each $r>1$, let $\ell_r:=\ell(S_0, r)>0$ be as provided by Lemma \ref{lem.LR} so that for any $(\xi,\eta)\in \bigcup_{ \ga\in \Ga_{\mathsf{u}, r}, \|\mu(\ga)\|\ge \ell_r} O_{S_0}(o,\ga o)\times O_{S_0}(\ga o,o)$, we have $\norm{\cal G(\xi,\eta)}<\ell_r$.

\begin{lem}\label{lem.LB}
If $r>1$ is large enough, the following holds:
 for any
$(\xi,\eta)\in  O_{S_0}(o,\ga o)\times  O_{S_0}(\ga o,o)$
for some $\ga\in \Ga_{{\mathsf u},r}$ with $ \|\mu(\ga)\| \ge \ell_r$,
there exist $t\in\bb R$ and $g\in Q_{2r}$ such that $$ga_{[t-1,t+1]}\subset\ga Q_{2r}\;\;\text{  and  }\;\; (g^+,g^-)=(\xi,\eta).$$
\end{lem}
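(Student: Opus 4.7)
The strategy is to construct $g$ and $t$ explicitly from the given shadow data in three stages: (i) locate a ``centered'' representative $g$ of the $AM$-orbit with endpoints $(\xi,\eta)$; (ii) exploit the shadow hypothesis to align $ga_t$ with $\ga$ for a suitable choice of $t$; (iii) upgrade the inclusion at a single time $t$ to the interval $[t-1,t+1]$ by continuity of the $A$-flow.

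\emph{Stage (i).} The hypothesis $\|\mu(\ga)\|\ge \ell_r$ together with Lemma \ref{lem.LR} gives $\|\cal G(\xi,\eta)\|<\ell_r$. By Lemma \ref{lem.BPS}, for any $g'\in G$ with $(g'^+,g'^-)=(\xi,\eta)$ the $A$-orbit $g'Ao$ comes within distance $c\ell_r+c'$ of $o$. Translating along $A$ (which preserves both visual endpoints) produces a $g$ with $(g^+,g^-)=(\xi,\eta)$ and $d(o,go)\le c\ell_r+c'$, hence $g\in G_{c\ell_r+c'}\subset Q_{2r}$ once $r$ is large enough.

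\emph{Stage (ii).} Since $\xi\in O_{S_0}(o,\ga o)$, pick $h\in K$ with $h^+=\xi$ and $a^*\in A^+$ such that $ha^*o\in B(\ga o,S_0)$. Because $g^+=h^+$, we may write $g=hp$ with $p\in P=MAN$; decompose $p=ma_0n_0$. The bound $d(go,o)=d(a_0n_0o,o)\le c\ell_r+c'$, via the Iwasawa diffeomorphism $AN\simeq X$, controls both $\|\log a_0\|$ and $d(n_0o,o)$ in terms of $\ell_r$. A direct computation using $ma_s=a_sm$ and $mNm^{-1}=N$ yields
\[
ga_s o = h\, a_0 a_s\,\tilde n_s\, o, \qquad \tilde n_s := m\,a_s^{-1}n_0 a_s\, m^{-1}\in N.
\]
Because $\mathsf u\in\inte\fa^+$ contracts $N$ under conjugation by $a_s$, the rays $a_s o$ and $\tilde n_s a_s o$ are asymptotic in the CAT$(0)$ space $X=G/K$, so $s\mapsto d(\tilde n_s o,o)=d(\tilde n_s a_so,a_so)$ is non-increasing on $[0,\infty)$ and hence bounded by $d(n_0o,o)$.

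\emph{Stage (iii).} From $\ga^{-1}ha^*\in G_{S_0}$ and Lemma \ref{com} we get $\|\log a^*-\mu(\ga)\|\le dS_0$, and combining with $\|\mu(\ga)-t_0\mathsf u\|<r$ (valid for some $t_0\ge 0$ because $\ga\in\Ga_{\mathsf u,r}$) gives $\|\log a^*-t_0\mathsf u\|<dS_0+r$. Set
\[
t := \frac{\langle \log a^* - \log a_0,\mathsf u\rangle}{\|\mathsf u\|^2},
\]
so that $\log(a_0 a_t)-\log a^*$ is perpendicular to $\mathsf u$; its norm is then bounded by the perpendicular components of $\log a_0$ and $\log a^*$, i.e.~by $\|\log a_0\|+(dS_0+r)$. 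A triangle-inequality chain gives
\[
d(ga_t o,\ga o)\le S_0+d(\tilde n_t o,o)+\|\log(a_0 a_t)-\log a^*\|,
\]
and the right-hand side is bounded linearly in $r$, so $\le 2r$ for $r$ sufficiently large. Therefore $\ga^{-1}g a_t\in G_{2r}\subset Q_{2r}$. For $s\in [t-1,t+1]$, $a_{s-t}\in A_1\subset A_{2r}$, so $\ga^{-1}g a_s=(\ga^{-1}g a_t)a_{s-t}\in G_{2r}\cdot A_{2r}=Q_{2r}$, which gives $ga_{[t-1,t+1]}\subset\ga Q_{2r}$.

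\emph{Main obstacle.} The delicate point is ensuring the final bound really fits inside $2r$, not just $Cr$ for a larger constant, which requires $\ell_r$ to grow at most mildly in $r$; this can be tracked through the regularity/accumulation argument in Lemmas \ref{lem.F2} and \ref{lem.LR}. Everything else is bookkeeping: the $KMAN$ factorization, the $A^+$-contraction of $N$, and the choice of $t$ matching the $\mathsf u$-component are all routine, but keeping each constant small enough relative to $r$ (so that the single-time inclusion survives the unit-interval thickening) is where care is needed.
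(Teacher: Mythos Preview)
Your overall strategy matches the paper's: center $g$ via the Gromov product bound (Lemma~\ref{lem.LR} and Lemma~\ref{lem.BPS}), use the shadow hypothesis to place a $K$-based ray near $\gamma o$, transfer this to the $g$-ray using that $g^+=h^+$, and then thicken to an interval. Your interval step, exploiting $Q_{2r}=G_{2r}A_{2r}$ so that $\ga^{-1}ga_t\in G_{2r}$ automatically gives $\ga^{-1}ga_s\in Q_{2r}$ for $|s-t|\le 1$, is in fact slightly cleaner than the paper's triangle-inequality version.

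However, Stage~(ii) contains a genuine gap. You assert that $d(a_0n_0 o,o)\le c\ell_r+c'$ ``controls both $\|\log a_0\|$ and $d(n_0 o,o)$ in terms of $\ell_r$'' via the Iwasawa diffeomorphism. This is not immediate: the map $AN\to X$ is a diffeomorphism but not a quasi-isometry onto its image in any obvious uniform way, and separately bounding the $A$- and $N$-parts from the distance requires a nontrivial comparison between Iwasawa and Cartan projections. Your subsequent estimates in Stage~(iii) (the bound on $d(\tilde n_t o,o)$ and on $\|\log(a_0a_t)-\log a^*\|$) both rely on these separate bounds, so the argument does not close as written.

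The paper sidesteps this entirely. Rather than decomposing $p=ma_0n_0$ and tracking the pieces, it uses one clean geometric fact (cited as \cite[1.6.6(4)]{Eb}): since $k^{-1}g\in P$, the two rays $s\mapsto ka_so$ and $s\mapsto ga_so$ are forward-asymptotic in the CAT(0) space $X$, hence $d(ga_s o,ka_s o)\le d(go,o)\le c\ell_r+c'$ for all $s\ge 0$. Combined with $ka_t o\in B(\gamma o,\,r+(d+1)S_0)$ (which comes directly from Lemma~\ref{lem.f}, rather than from your more elaborate perpendicular-projection choice of $t$), this gives $d(ga_s o,\gamma o)\le r+(d+1)S_0+c\ell_r+c'+1$ for $s\in[t-1,t+1]$ in one line. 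You actually invoke exactly this CAT(0) monotonicity for the $\tilde n_s$ term; applying it once to the pair $(g,h)$ instead replaces your two unjustified bounds by a single justified one. Your ``main obstacle'' about $\ell_r$ versus $r$ is a fair observation and applies equally to the paper's argument.
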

\begin{proof}
Let $(\xi,\eta)$ be as in the statement. Then by Lemma \ref{lem.f},
there exists $t\geq 0$ and $k\in K$ such that $\xi=k^+$, 
$ka_{t}o\in B(\ga o,r+(d+1)S_0)$.
Let $g\in G$ be such that $(g^+,g^-)=(\xi,\eta)$.
Since $\|\mu(\ga)\|>\ell_r$, by replacing $g\in G$ by an element of $gA$, we may assume that  $d(go,o)<c\ell_r+c'$  where $c$ and $c'$ are as in  Lemma \ref{lem.BPS}. As $g^+=k^+$ and hence $k^{-1}g\in P$, it follows by \cite[1.6.6 (4)]{Eb} that
$d(ga_{t}o,ka_{t}o)\leq d(go,o)$ for all $t\geq0$.

Hence for all $s\in [t-1,t+1]$,
$$
d(ga_{s}o,ka_{t}o)<d(ga_so,ga_to)+d(ga_{t}o,ka_{t}o)\leq 1+d(go,o)<1+c\ell_r+c'.
$$
It follows that
$ga_{[t-1,t+1]}\in  \ga G_{r+(d+1)S_0 +c\ell_r +c'+1}$.
Now if $r$ is large enough, 
$$ga_{[t-1,t+1]}\subset \ga Q_{2r}.$$
Similarly, since $go\in 
G_{c\ell_r+c'}$, we have
$g\in Q_{2r}$,
which was to be shown.
\end{proof}

\begin{lemma}\label{lem.K3}
If $r>1$ is large enough, the following holds:
 for any $g\in G$ such that
$(g^+,g^-)\in O_{S_0}(o,\ga o)\times  O_{S_0}(\ga o,o)$
 for some $\ga\in\Ga_{{\mathsf u},r}$ and $T>0$ satisfying
$$ \|\mu(\ga)\|>\ell_r\;\;\text{ and }\;\;
8dr\norm{\psi}+\delta<\psi(\mu(\ga))<\delta T-8dr\norm{\psi}-\delta,
$$
we have \be\label{ggg}
\int_0^T \int_A \mathbbm{1}_{
Q_{4r}
\cap\ga 
Q_{4r}
a_{-t}}([gb])\,db\,dt\geq  2\op{Vol}(
A_{2r}
).
\ee
\end{lemma}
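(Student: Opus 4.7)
The plan is to apply Lemma \ref{lem.LB} to obtain a point $g' \in Q_{2r}$ with $(g')^\pm = (g^+, g^-)$ and $g' a_{[t'-1,t'+1]} \subset \ga Q_{2r}$ for some $t' \in \br$. Since $[g]$ and $[g']$ have the same image in $\cal F^{(2)}$, the Hopf parametrization of $G/M$ produces an element $a_* \in A$ such that $[g'] = [g a_*]$. Using the right $M$-invariance of $Q_{4r}$ (which holds since $M \subset K$ and $M$ centralizes $A$), the substitution $b = a_* c$ rewrites the inner integral as $\int_A \mathbbm{1}_{Q_{4r} \cap \ga Q_{4r} a_{-t}}([g'c])\,dc$, by translation-invariance of Haar measure on $A$.

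The heart of the argument is the following thickening claim: for every $c \in A_{2r}$ and every $s \in [t'-1, t'+1]$, both $g' c \in Q_{4r}$ and $g' c a_s \in \ga Q_{4r}$. This is elementary from the product form $Q_r = KA_r^+ K A_r$ combined with the commutativity of $A$. Indeed, writing $g' = k_1 \tilde a_1 k_2 \tilde a_2$ with $k_i \in K$, $\tilde a_1 \in A_{2r}^+$, $\tilde a_2 \in A_{2r}$ gives $g' c = k_1 \tilde a_1 k_2 (\tilde a_2 c) \in Q_{4r}$, since $\|\log(\tilde a_2 c)\| \le 4r$; and writing $g' a_s = \ga h$ with $h \in Q_{2r}$ (guaranteed by Lemma \ref{lem.LB}), the identity $g' c a_s = \ga h c$ together with the same $Q_r$-calculation applied to $hc$ yields $g' c a_s \in \ga Q_{4r}$. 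Combined with the previous paragraph, the double integral is bounded below by $\op{Vol}(A_{2r})$ times the length of $[t'-1, t'+1] \cap [0, T]$.

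The final step is to verify $[t'-1, t'+1] \subset [0, T]$, which is precisely what the hypothesis on $\psi(\mu(\ga))$ is tailored for. Tracing through the construction of $t'$ in Lemma \ref{lem.LB} (where $k a_{t'} o \in B(\ga o, r + (d+1) S_0)$) and applying Lemma \ref{com}, one gets $\|t' \u - \mu(\ga)\| \le d(r + (d+1)S_0) \le 2dr$ for $r$ sufficiently large (depending only on $S_0$). Applying the linear form $\psi$ yields $|t'\delta - \psi(\mu(\ga))| \le 2dr \|\psi\|$, and the hypothesis $8dr\|\psi\| + \delta < \psi(\mu(\ga)) < \delta T - 8dr\|\psi\| - \delta$ then forces $1 \le t' \le T - 1$, giving the claimed lower bound $2\op{Vol}(A_{2r})$. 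The main technical obstacle is this bookkeeping of constants: ensuring the $Q_{2r} \to Q_{4r}$ thickening simultaneously absorbs the $A_{2r}$-shift in $c$ and guarantees that the time window $[t'-1, t'+1]$ sits inside $[0, T]$.
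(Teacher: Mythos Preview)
Your proof is correct and follows essentially the same strategy as the paper: reduce to a representative $g' \in Q_{2r}$ via Lemma~\ref{lem.LB}, use the product structure of $Q_r$ to absorb the $A_{2r}$-thickening into $Q_{4r}$, and verify $[t'-1,t'+1] \subset [0,T]$. The only notable difference is in the last step: rather than tracing through the internal construction of $t'$ in the proof of Lemma~\ref{lem.LB}, the paper simply observes that $g' \in Q_{2r} \cap \ga Q_{2r} a_{-t'}$ and invokes Lemma~\ref{lem.CC} (with $2r$ in place of $r$) to obtain $\|\mu(\ga) - t'\u\| \le 8dr$, which is slightly cleaner since it uses only the \emph{statement} of Lemma~\ref{lem.LB} and directly accounts for the constant $8dr$ appearing in the hypothesis.
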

\begin{proof}
Note that replacing $g$ with an element of $gA$ does not affect the validity of \eqref{ggg}.
Hence by Lemma \ref{lem.LB}, we may assume that $g\in Q_{2r}$ and $ga_{[t_0-1,t_0+1]}\subset\ga Q_{2r}$ for some $t_0\in\mathbb R$.

It follows that
$Q_{2r}\cap\ga Q_{2r}a_{-t}\neq\emptyset$  for all $t\in [t_0-1,t_0+1]$.
Note that $|\psi(\mu(\ga))-t_0\delta|\leq 8dr\norm{\psi}$ by Lemma \ref{lem.CC},
and hence $[t_0-1,t_0+1]\subset [0,T]$ by the hypothesis.
Since $g\in Q_{2r}$ and hence $g\in G_{4r}$ by Lemma \ref{lem.qr}, we have $gA\cap Q_{4r}\supset gA_{4r}$.
Consequently,
\begin{equation}\label{eq.ovl}
\int_A \mathbbm{1}_{
Q_{4r}
\cap\ga 
Q_{4r}
a_{-t}}([gb])\,db\geq \int_{
A_{4r}
}\mathbbm{1}_{\ga 
Q_{4r}
}([gba_t])\,db.
\end{equation}

By definition of $Q_{4r}$, there is a uniform lower bound  for \eqref{eq.ovl}, say 
$\op{Vol}(A_{2r})$,
whenever $[ga_t]\cap\ga 
Q_{4r}
\neq\emptyset$, in particular for all $t\in[t_0-1,t_0+1]$ by Lemma \ref{lem.LB}.
Hence,
\begin{align*}
&\int_0^T \int_A \mathbbm{1}_{
Q_{4r}
\cap\ga 
Q_{4r}
a_{-t}}([gb])\,db\,dt\\
&\geq \int_{t_0-1}^{t_0+1} \int_A \mathbbm{1}_{
Q_{4r}
\cap\ga 
Q_{4r}
a_{-t}}([gb])\,db\,dt\geq 2
\op{Vol}(A_{2r}).
\end{align*}
This proves the lemma.
\end{proof}

 \subsection*{Proof of \eqref{eq1} in Proposition \ref{lem.EE}}
By definition of ${\tilde{\m}}$, we have for any $\ga\in \Ga$ and $r, t>0$,
\begin{align*}
&\tilde{\m}(
Q_{4r}
\cap\ga 
Q_{4r}
a_{-t})\\
= &\int_{{\F^{(2)}}}\left(\int_A \mathbbm{1}_{
Q_{4r}
\cap\ga 
Q_{4r}
a_{-t}}([gb])\,db\right)e^{\psi(\cal G(g^+,g^-))}d\nu_{\psi}(g^+)d\nu_{\psi\circ \i}(g^-)\\
\geq& \int_{
O_{S_0}(o,\ga o)\times  O_{S_0}(\ga o,o)
}\left(\int_A \mathbbm{1}_{
Q_{4r}
\cap\ga 
Q_{4r}
a_{-t}}([gb])\,db\right)e^{\psi(\cal G(g^+,g^-))}d\nu_{\psi}(g^+)d\nu_{\psi\circ \i}(g^-).
\end{align*}

Now Lemma \ref{lem.K3} implies that if $\ga\in\Ga_{{\mathsf u},r}$, 
$\|\mu(\ga)\|>\ell_r$ and $(8dr\norm{\psi}+\delta)<\psi(\mu(\ga))<\delta T-(8dr\norm{\psi}+\delta)$, then
\begin{align*}
&\int_0^T \tilde{\m}(
Q_{4r}
\cap\ga 
Q_{4r}
a_{-t})\,dt\\
&\geq
2\op{Vol}(
A_{2r}
)\int_{
O_{S_0}(o,\ga o)\times  O_{S_0}(\ga o,o)
}e^{\psi(\cal G(g^+,g^-))}d\nu_{\psi}(g^+)d\nu_{\psi\circ \i}(g^-)\\
&\geq 2\op{Vol}(
A_{2r}
)e^{-\norm{\psi}\ell_r} \nu_\psi(O_{S_0}(o,\ga o))\nu_{\psi\circ \i}(O_{S_0}(\ga o,o))\\
&\geq 2\op{Vol}(
A_{2r}
)e^{-\norm{\psi}\ell_r} \beta(\nu_{\psi\circ \i})c_1 e^{-\kappa\norm{\psi}S_0}e^{-\psi(\mu(\ga))},
\end{align*}
where the second inequality follows from 
the lower bound 
$e^{\psi(\cal G(g^+,g^-))}\geq e^{-\norm{\psi}\ell_r}$ and the last inequality follows from Lemma \ref{lem.shadow}. 
Therefore,
\begin{align*}
\int_0^T\sum_{\ga\in\Ga}\tilde{\m}(
Q_{4r}
\cap \ga 
Q_{4r}
a_{-t})\,dt &\geq\int_0^T\sum_{\ga\in\Ga_{{\mathsf u},r},\,\|\mu(\ga)\|>\ell_r}\tilde{\m}(
Q_{4r}
\cap \ga 
Q_{4r}
a_{-t})\,dt\\
&\gg  \sum_{
\substack{
\ga\in\Ga_{{\mathsf u},r}\,\|\mu(\ga)\|>\ell_r,\\
\psi(\mu(\ga))<\delta T-(8dr\norm{\psi}+\delta)
}
}  e^{-\psi(\mu(\ga))}.
\end{align*}
Since $\#\{\ga\in \Ga:\|\mu(\ga)\|\le \ell_r\}$ is a finite set,
this proves the lemma by Corollary \ref{lem.shell}.
\qed

Proposition \ref{lem.EE} yields:
\begin{cor}\label{cobal}  Suppose that for any large $r, s\gg 1$, and $T>1$,
\be \label{eq_assump}
 \int_0^T \sum_{\ga\in \Ga} \tilde{\mathsf m}(Q_r\cap \ga Q_r a_{-t})\,dt \asymp  \int_0^T \sum_{\ga\in \Ga} \tilde{\mathsf m} (Q_s \cap \ga Q_s a_{-t})\,dt
\ee
with the implied constant independent of $T$.
If  $\sum\limits_{\ga\in\Ga_{{\mathsf u},R}}e^{-\psi(\mu(\ga))}=\infty$ for some $R>0$, then for all sufficiently large $r$, we have for any $T>1$:
\begin{multline} \label{eq3}
\int_0^T\int_0^T\sum_{\ga,\ga'\in\Ga} \tilde{\m}(Q_r\cap \ga Q_ra_{-t}\cap\ga' Q_ra_{-t-s})\,dt\,ds\ll \\ \left(\int_0^T\sum_{\ga\in\Ga}\tilde{\m}(Q_r\cap \ga Q_ra_{-t})\,dt \right)^2.
\end{multline}
\end{cor}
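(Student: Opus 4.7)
The plan is to deduce Corollary~\ref{cobal} directly by chaining the two estimates of Proposition~\ref{lem.EE} and then using the hypothesis \eqref{eq_assump} to reconcile the radii $r$ appearing in them. The key point is that \eqref{eq2} gives an upper bound for the double-sum integral in terms of a one-dimensional directional Poincar\'e-type sum over $\Ga_{\mathsf{u},4dr}$, while \eqref{eq1} gives a \emph{lower} bound for the single-sum integral over $Q_{4r}$ in terms of a similar directional sum (at a different radius). So there is a natural squeeze; we just need to pay for an enlargement of $r$.

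First I would fix $r>1$ large enough that Proposition~\ref{lem.EE} applies both with the given $r$ and with $4dr$ in place of $r$ (so that the auxiliary constants $\ell_{4dr}$ and the shadow lemma requirements are satisfied, and $\sum_{\gamma\in\Ga_{\mathsf{u},4dr}}e^{-\psi(\mu(\gamma))}$ still diverges, which is automatic since $\Ga_{\mathsf{u},R}\subset\Ga_{\mathsf{u},R'}$ for $R\le R'$). Then by \eqref{eq2},
\[
\int_0^T\!\!\int_0^T \sum_{\gamma,\gamma'\in\Ga}\tilde{\mathsf m}\bigl(Q_r\cap\gamma Q_r a_{-t}\cap\gamma' Q_r a_{-t-s}\bigr)\,dt\,ds
\ll\Biggl(\sum_{\substack{\gamma\in\Ga_{\mathsf{u},4dr}\\ \psi(\mu(\gamma))\le\delta T}} e^{-\psi(\mu(\gamma))}\Biggr)^{\!2}.
\]

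Next, applying \eqref{eq1} with $r$ replaced by $4dr$ yields
\[
\sum_{\substack{\gamma\in\Ga_{\mathsf{u},4dr}\\ \psi(\mu(\gamma))<\delta T}} e^{-\psi(\mu(\gamma))}
\ll \int_0^T \sum_{\gamma\in\Ga}\tilde{\mathsf m}\bigl(Q_{16dr}\cap\gamma Q_{16dr}\,a_{-t}\bigr)\,dt.
\]
Squaring both sides and combining with the previous upper bound, the right-hand side is controlled by the square of the $Q_{16dr}$-integral. Finally, the hypothesis \eqref{eq_assump} tells us that for any two large radii (here $r$ and $16dr$), the one-sum integrals are comparable uniformly in $T$:
\[
\int_0^T \sum_{\gamma\in\Ga}\tilde{\mathsf m}\bigl(Q_{16dr}\cap\gamma Q_{16dr}\,a_{-t}\bigr)\,dt
\asymp \int_0^T \sum_{\gamma\in\Ga}\tilde{\mathsf m}\bigl(Q_r\cap\gamma Q_r\,a_{-t}\bigr)\,dt,
\]
and substituting this into the previous inequality gives exactly \eqref{eq3}.

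This is essentially bookkeeping once Proposition~\ref{lem.EE} is in hand; there is no real obstacle. The only subtlety worth double-checking is that one is allowed to take $r$ large enough in \emph{both} Proposition~\ref{lem.EE} applications (for $r$ itself and for $4dr$) simultaneously, and that the divergence hypothesis on $\sum_{\gamma\in\Ga_{\mathsf{u},R}}e^{-\psi(\mu(\gamma))}$ persists after enlarging $R$ from $R$ to $4dr$ (monotone in $R$), so that \eqref{eq1} actually produces a nonempty/useful sum for all $T$ sufficiently large. The $\u$-balanced assumption is not explicitly invoked here; it enters only through the hypothesis \eqref{eq_assump}, which is the corollary's standing assumption and is precisely the rephrasing of $\u$-balancedness needed to trade radii in the correlation integrals.
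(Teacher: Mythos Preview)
Your proposal is correct and follows essentially the same route as the paper's own proof: chain \eqref{eq2} with \eqref{eq1} (the latter applied with $4dr$ in place of $r$, producing $Q_{16dr}$), then invoke the hypothesis \eqref{eq_assump} to replace $Q_{16dr}$ by $Q_r$. The paper's argument is a one-line version of exactly this.
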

\begin{proof}
By Proposition \ref{lem.EE}, we get
\begin{multline*} 
\int_0^T\int_0^T\sum_{\ga,\ga'\in\Ga} \tilde{\m}(Q_r\cap \ga Q_ra_{-t}\cap\ga' Q_ra_{-t-s})\,dt\,ds\ll \\ \left(\int_0^T\sum_{\ga\in\Ga}\tilde{\m}(Q_{16dr}\cap \ga Q_{16dr}a_{-t})\,dt \right)^2,
\end{multline*}
which implies the claim in view of the hypothesis \ref{eq_assump}.
\end{proof}

\subsection*{Proof of Proposition \ref{bal}}
We will apply the following version of {the} Borel-Cantelli lemma:
\begin{lemma}\cite[Lem. 2]{AS}\label{lem.BC}
Let $(\Omega,\mathsf M)$ be a finite Borel measure space and $\{P_t:t\geq0\}\subset\Om$ be such that $(t,\omega)\mapsto\mathbbm{1}_{P_t}(\omega)$ is measurable.
Suppose that
\begin{enumerate}
\item
$\int_0^\infty \mathsf M(P_t)\,dt=\infty$, and
\item
there is a constant $C>0$ such that
$$
\int_0^T\int_0^T \M(P_t\cap P_s)\,dt\,ds\leq C\left(\int_0^T \M(P_t)\,dt\right)^2\text{ for all }T\gg1.
$$
\end{enumerate}
Then we have
$$
\M\left\{\omega\in\Om: \int_0^\infty \mathbbm{1}_{P_t}(\omega)\,dt=\infty\right\}>\frac{1}{C}.
$$
\end{lemma}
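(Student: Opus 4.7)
The plan is to run the standard second-moment (Paley--Zygmund type) argument for the continuous-parameter Borel--Cantelli lemma. Set
$$f_T(\omega):=\int_0^T \mathbbm 1_{P_t}(\omega)\,dt,$$
which is measurable and non-decreasing in $T$ thanks to the joint measurability of $(t,\omega)\mapsto\mathbbm 1_{P_t}(\omega)$. By Fubini its first moment is
$$E_T:=\int_\Om f_T\,d\M=\int_0^T \M(P_t)\,dt,$$
which tends to $\infty$ by hypothesis (1), while its second moment is
$$\int_\Om f_T^{\,2}\,d\M=\int_0^T\!\!\int_0^T \M(P_t\cap P_s)\,dt\,ds\le C\,E_T^{\,2}$$
for all $T$ sufficiently large, by hypothesis (2). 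The target set can be rewritten, using monotonicity in $T$, as $\{f_\infty=\infty\}$ where $f_\infty=\lim_{T\to\infty} f_T$.

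Next I would extract a lower bound on the measure of super-level sets. For any $\lambda\in(0,1/\M(\Om))$, let $A_T^\lambda:=\{f_T\ge \lambda E_T\}$. Splitting the integral,
$$E_T=\int_{A_T^\lambda} f_T\,d\M+\int_{(A_T^\lambda)^c} f_T\,d\M\le \int_{A_T^\lambda} f_T\,d\M+\lambda E_T\,\M(\Om),$$
so $\int_{A_T^\lambda} f_T\,d\M\ge (1-\lambda\M(\Om))\,E_T$. Cauchy--Schwarz then gives
$$\bigl((1-\lambda\M(\Om))E_T\bigr)^2\le \M(A_T^\lambda)\int_\Om f_T^{\,2}\,d\M\le C\,\M(A_T^\lambda)\,E_T^{\,2},$$
hence $\M(A_T^\lambda)\ge (1-\lambda\M(\Om))^2/C$ for all $T\gg 1$.

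Finally, I would pass to the tail and to $\lambda\to 0^+$. Restricting $T$ to integer values $n$, consider $L^\lambda:=\limsup_n A_n^\lambda=\bigcap_N\bigcup_{n\ge N}A_n^\lambda$. Any $\omega\in L^\lambda$ satisfies $f_n(\omega)\ge \lambda E_n$ along a subsequence $n\to\infty$, and since $E_n\to\infty$ this forces $f_\infty(\omega)=\infty$; thus $L^\lambda\subseteq\{f_\infty=\infty\}$. Because $\M$ is finite, continuity of $\M$ from above yields $\M(L^\lambda)=\lim_N\M\bigl(\bigcup_{n\ge N}A_n^\lambda\bigr)\ge \liminf_n \M(A_n^\lambda)\ge (1-\lambda\M(\Om))^2/C$. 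Therefore
$$\M\{f_\infty=\infty\}\ge (1-\lambda\M(\Om))^2/C,$$
and letting $\lambda\to 0^+$ gives the desired lower bound $\ge 1/C$ (the strict inequality in the stated lemma then follows from a small further perturbation, or is a minor imprecision; the quantitative statement we need downstream is the positivity).

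The argument is essentially routine once Fubini and Cauchy--Schwarz are in place; the only minor obstacle is keeping track of the total mass $\M(\Om)$ (rather than normalising to a probability measure), which enters the Paley--Zygmund constant as $(1-\lambda\M(\Om))^2$ but disappears in the $\lambda\to 0^+$ limit.
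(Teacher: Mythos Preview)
The paper does not give its own proof of this lemma; it is simply quoted from \cite[Lem.~2]{AS} (Aaronson--Sullivan) and used as a black box. So there is nothing to compare against in the paper itself.

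Your argument is correct and is exactly the standard second-moment (Paley--Zygmund) proof that underlies the cited result: define $f_T=\int_0^T\mathbbm 1_{P_t}\,dt$, compute its first and second moments via Fubini, apply Cauchy--Schwarz to bound $\M\{f_T\ge \lambda E_T\}$ from below, pass to the $\limsup$, and let $\lambda\to 0^+$. The bookkeeping with $\M(\Omega)$ is handled correctly.

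One small remark: your argument yields $\M\{f_\infty=\infty\}\ge 1/C$, not the strict inequality $>1/C$ printed in the lemma. This is not a gap in your reasoning; the strict inequality as stated does not follow from the hypotheses without an additional assumption (and indeed the standard Paley--Zygmund bound is non-strict). For the application in the paper only positivity of this measure is used, so the discrepancy is immaterial, and you were right to flag it as a minor imprecision rather than try to force a strict inequality.
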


Suppose that  $\sum_{\ga\in \Ga_{{\mathsf u},R} }e^{-\psi(\mu(\ga))}=\infty$ for some $R>0$. 
Let $r>R$ be large enough to satisfy Proposition \ref{lem.EE}, and consider $Q_r=G_rA_r$. As $M$ commutes with $A$ and $Q_r=KA_r^+KA_r$, $Q_r$ is an $M$-invariant subset. Let $[Q_r]=\Gamma\ba Q_r/M \subset \Gamma\ba G/M$.
Set
$$
\M:=\mathsf m|_{[Q_r]}\text{ and }P_t:=\Ga \ba \Ga (Q_r\cap \Ga Q_r a_{-t})\subset \Ga\ba G/M.
$$
We claim that 
\be\label{re}
\int_0^T\int_0^T \M(P_t\cap P_s)\,ds\,dt\ll\left(\int_0^T \M(P_t)\,dt\right)^2.
\ee
Since $\m$ is assumed to be $\mathsf{u}$-balanced, Corollary \ref{cobal} applies, and hence
\begin{equation}
\int_0^T\int_0^T \M(P_t\cap P_{t+s})\,ds\,dt\ll\left(\int_0^T \M(P_t)\,dt\right)^2.
\end{equation}
Therefore
\begin{align*}
&\int_0^T\int_0^T \M(P_t\cap P_s)\,ds\,dt=2\int_0^T\int_t^T \M(P_t\cap P_s)\,ds\,dt\\
&\leq 2\int_0^T\int_0^T \M(P_t\cap P_{t+s})\,ds\,dt\ll\left(\int_0^T \M(P_t)\,dt\right)^2,
\end{align*}
proving the claim.
Applying Lemma \ref{lem.BC} with $\M$ and $P_t$, we conclude that
$$
\mathsf m\left\{[g]\in [Q_r]: \int_0^\infty \mathbbm{1}_{[Q_r]}([g]a_t)dt =\infty\right\}>0.
$$
It follows that  $\nu_\psi(\{g^+\in \F : \limsup [g] a_t \ne \emptyset\} )>0$ and
hence $\nu_\psi (\La_{\mathsf u})>0$. On the other hand, by
\eqref{same}, we have
 $\sum_{\ga\in \Ga_{\i({\mathsf u}),R} }e^{-\psi\circ \i (\mu(\ga))}=\infty$.
By the same argument as above, this implies that
$$\nu_{\psi\circ \i} (\{g^+\in \F : \limsup [g] \exp (t \i ({\mathsf u})) \ne \emptyset\} )>0$$
and hence
$\nu_{\psi \circ \i}(\La_{\i ({\mathsf u})})>0$.
This finishes the proof by Proposition \ref{lem.01}. 

\subsection*{Proof of Theorem \ref{dio}}.
The equivalence $(1) \Leftrightarrow (2)$ follows from Proposition
\ref{lem.01}.
The equivalence $(2) \Leftrightarrow (3)$
follows from Proposition \ref{lem.dich}.
The equivalence $(3) \Leftrightarrow (4)\Leftrightarrow (5)$ follows from
Proposition \ref{mini}.
The implication $(1)\Rightarrow (6)$ is proved in Lemma \ref{lem.se},
and the implication $(6)\Rightarrow (7)$ follows from Lemma \ref{lem.se} and Proposition \ref{bal}.

\begin{Rmk}\label{518}
The asymptotic inequality \eqref{re} shows that
if $\m$ is $\mathsf{u}$-balanced and $\sum\limits_{\ga\in\Ga_{{\mathsf u},R}}e^{-\psi(\mu(\ga))}=\infty$ for some $R>0$, then the measure preserving flow $(\Ga\ba G/M,\m, \{a_t\})$ is rationally ergodic and the following
$$\cal A_T=\frac{1}{\m ([ Q_r])^2} \int_{\Gamma\ba G/M} \int_0^T \mathbbm 1_{[Q_r]}(xa_t )dt d\m (x)$$
is the asymptotic type of the flow in the sense of \cite{Aa} and \cite[5]{AS}.
\end{Rmk}
\section{Dichotomy for Anosov groups}
Let $\Ga<G$ be an Anosov subgroup defined as in the introduction. 
 For each $\mathsf{v}\in \inte \L_\Ga$,
there exists a unique $\psi_{\mathsf v}\in \fa^*$ such that
$\psi_{\mathsf v}\ge \psi_\Ga$ and $\psi_{\mathsf v} ({\mathsf v})=\psi_\Ga({\mathsf v})$, and a unique
$(\Ga, \psi_{\mathsf v})$-conformal measure $\nu_{\mathsf v}$ supported on $\La$ (\cite{Samb2}, \cite{ELO}).
Moreover, $\{u\in \inte \fa^+:\psi_{\mathsf v}({\mathsf u})=\psi_\Ga({\mathsf u})\}=\br_+ {\mathsf v}$ (\cite{Q4}, \cite{Samb3}).
The assignments $\mathsf{v}\mapsto \psi_{\mathsf v} $ and $\mathsf{v}\mapsto \nu_{\mathsf v}$ give bijections among
$\inte \L_\Ga$, $D_\Ga^\star$ and
the space of all $\Gamma$-conformal measures supported on $\La$ \cite[Prop. 4.4 and Thm. 7.7]{LO}.

For each $\mathsf{v}\in \inte\L_\Ga$, we denote by $\m_{\mathsf v}$  the BMS measure on $\Ga\ba G/M$ associated to $(\nu_{\mathsf v}, \nu_{\i({\mathsf v})})$.
Chow and Sarkar proved the following theorem for $f_1, f_2\in C_c(\Gamma\ba G/M)$.
\begin{thm}\cite{CS} \label{lem.mix}
Let $\Ga<G$ be an Anosov subgroup and let $\mathsf{v}\in \inte \L_\Ga$.
There exists $\kappa_{\mathsf v}>0$ such that for any $f_1,f_2\in C_c(\Ga\ba G/M)$,
$$
\lim_{t\to+\infty}t^{\frac{\op{rank}(G)-1}{2}}\int_{\Ga\ba G/M}f_1(x)f_2(x\exp(t{\mathsf v}))\,d\m_{\mathsf v}(x)=\kappa_{\mathsf v}\cdot \mathsf m_{\mathsf v}(f_1)\mathsf m_{\mathsf v}(f_2).
$$
\end{thm}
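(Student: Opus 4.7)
The plan is to reduce the problem to a local limit theorem for a vector-valued Hölder cocycle over a symbolic system coding the Anosov dynamics. Since $\Ga$ is Anosov (hence word hyperbolic with a $\Ga$-equivariant embedding $\zeta$ of $\partial\Ga$ into $\cal F$), the abstract geodesic flow on $\partial^{(2)}\Ga\times\br$ admits a Markov coding by a mixing subshift of finite type $(\Sigma,\sigma)$ with a Hölder return time. Crucially, because the limit set $\La=\zeta(\partial\Ga)$ sits inside $\cal F$ in a Morse-regular way, this coding lifts to a coding of the non-wandering set $\Omega\subset\Ga\ba G/M$ for the $A$-action by a suspension of $(\Sigma,\sigma)$ with a Hölder \emph{$\fa$-valued} roof function $\tau:\Sigma\to\fa$, essentially given by the Cartan projection along the coded orbits.

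Under this coding, the measure $\m_{\mathsf v}$ on $\Omega$ corresponds to the product of Lebesgue measure on $\fa$ with the equilibrium state $\mu_{\phi}$ on $\Sigma$ for the Hölder potential $\phi:=-\psi_{\mathsf v}\circ\tau$; indeed, the choice $\psi_{\mathsf v}({\mathsf v})=\psi_\Ga({\mathsf v})$ is precisely what makes $\phi$ have topological pressure zero, so that $\mu_\phi$ is a probability measure, and the $(\Ga,\psi_{\mathsf v})$-conformal measure $\nu_{\mathsf v}$ on $\La$ is the projection of $\mu_\phi$ under the coding map. Given $f_1,f_2\in C_c(\Ga\ba G/M)$, I would express $\int f_1(x)f_2(x\exp(t{\mathsf v}))\,d\m_{\mathsf v}(x)$ in these symbolic coordinates. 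Writing the $\fa$-variable as $t{\mathsf v}+\mathbf w$ with $\mathbf w$ ranging in $\fa$, Fubini reduces the correlation, up to smooth cutoff, to sums of the form
\[
\sum_{n\ge 0}\int_\Sigma F_1(x)\,F_2(\sigma^n x)\,\eta\!\bigl(\tau_n(x)-t{\mathsf v}\bigr)\,d\mu_\phi(x),
\]
where $\tau_n:=\tau+\tau\circ\sigma+\cdots+\tau\circ\sigma^{n-1}$ is the Birkhoff cocycle of $\tau$ and $\eta$ is a nice compactly supported function on $\fa$ built out of $f_1,f_2$.

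The core analytic step is then a local central limit theorem for $\tau_n$ with respect to $\mu_\phi$. By the work of Guivarc'h--Le Page and Sambarino, the Zariski density of $\Ga$ guarantees that the group generated by the periods $\{\tau_n(x): \sigma^n x=x\}$ is dense in $\fa$ and that the Ruelle transfer operator $\mathcal L_{\phi+i\langle\xi,\tau\rangle}$ has a spectral gap uniform for $\xi$ in compact subsets of $\fa^*\setminus\{0\}$, with a non-degenerate quadratic tangency of the leading eigenvalue to $1$ at $\xi=0$ (whose Hessian is the asymptotic covariance matrix $\Sigma_{\mathsf v}$ of $\tau$). Inverting the Fourier transform on $\fa$ and using the contour/saddle-point analysis standard in Nagaev--Guivarc'h theory, one obtains that for $n$ around $t/\psi_{\mathsf v}({\mathsf v})$ the integrand concentrates in a tube of width $O(\sqrt t)$ around $t{\mathsf v}$, producing the $t^{-(\op{rank}G-1)/2}$ factor from the $r-1$ transverse Gaussian directions, while the longitudinal direction $\br{\mathsf v}$ is eaten by the $dt$-integration along the flow. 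The constant $\kappa_{\mathsf v}$ emerges as $(2\pi)^{-(r-1)/2}(\det\Sigma_{\mathsf v}^\perp)^{-1/2}$ divided by the drift $\psi_{\mathsf v}({\mathsf v})$, times the BMS mass factor.

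The main obstacle, by a wide margin, is establishing the uniform spectral gap for the twisted transfer operators away from $\xi=0$ together with the non-degeneracy of the Hessian at $\xi=0$: the former requires a Dolgopyat-type oscillatory-integral argument adapted to $\fa$-valued cocycles (ruling out coboundary obstructions via Zariski density of the Jordan projection, hence of the period group), while the latter rests on showing that the Livšic cohomology class of $\tau$ is genuinely $r$-dimensional. Once these are in hand, the passage from the local limit theorem to the stated mixing asymptotic is formal, via approximation of $f_1,f_2$ by Hölder functions pulled back through the coding and a standard tightness estimate handling the tail $|n\psi_{\mathsf v}({\mathsf v})-t|\gg\sqrt t$ via Chebyshev applied to the (genuine) central limit theorem.
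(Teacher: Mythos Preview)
The paper does not give its own proof of this theorem: it is imported verbatim from Chow--Sarkar \cite{CS} (and, in the $M$-invariant special cases, from Sambarino \cite{Samb}), and is used as a black box to verify that $\m_{\mathsf v}$ is $\mathsf v$-balanced and to deduce Corollary~\ref{lem.DC}. So there is nothing in the paper to compare your argument against.

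That said, your outline is, in spirit, the approach taken in \cite{CS} and \cite{Samb}: code the BMS dynamics by a mixing subshift of finite type with an $\fa$-valued H\"older roof function built from the Jordan/Cartan data, identify $\m_{\mathsf v}$ with the equilibrium state for $-\psi_{\mathsf v}\circ\tau$ (pressure zero exactly because $\psi_{\mathsf v}({\mathsf v})=\psi_\Ga({\mathsf v})$), and then run a Nagaev--Guivarc'h spectral/transfer-operator argument to obtain a local limit theorem for the $\fa$-valued Birkhoff sums, the $(\op{rank}G-1)/2$ exponent coming from the transverse Gaussian directions in $\ker\psi_{\mathsf v}$. Your identification of the two hard points---the non-lattice/aperiodicity input (density of the period group in $\fa$, supplied by Benoist's theorem on the Jordan projection of a Zariski dense subgroup) and the non-degeneracy of the covariance Hessian---is accurate. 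One caution: the actual implementation in \cite{CS} works on $\Ga\backslash G$ rather than $\Ga\backslash G/M$, and getting from the symbolic suspension to genuine mixing on the homogeneous space requires more than a ``formal'' passage; the coding is only a semi-conjugacy with controlled multiplicity, and the compatibility with the $M$-extension (needed to drop $M$-invariance of $f_1,f_2$) is a substantial part of \cite{CS}. Your sketch elides this, but as a proof plan it is pointed in the right direction.
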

Since $\m_{\mathsf v}$ is $A$-invariant, the above is equivalent to:
\be\label{minus}
\lim_{t\to+\infty}t^{\frac{\op{rank}(G)-1}{2}}\int_{\Ga\ba G/M}f_1(x)f_2(x\exp(-t\v))\,d\m_{\mathsf v}(x)=\kappa_{\mathsf v}\cdot \mathsf m_{\mathsf v}(f_1)\mathsf m_{\mathsf v}(f_2).
\ee

 In particular, for any $\mathsf{v}\in \inte\L_\Ga$, the measure $\m_{\mathsf v}$ is $\mathsf{v}$-balanced.

\begin{cor}\label{lem.DC}
For any $\mathsf{v}\in \inte\L_\Ga$ and any bounded Borel subset $Q \subset G/M$ with $\tilde{\mathsf m}_{\mathsf v}(\op{int}Q)>0$,  we have 
$$\int_0^\infty  \sum_{\ga\in \Ga}\tilde{\mathsf m}_{\mathsf v}(Q\cap\ga Q\exp(-t\v))\,dt=\infty\text{
if and only if } \op{rank}(G)\leq 3.$$
\end{cor}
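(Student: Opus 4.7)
The plan is to recast the sum $\sum_{\gamma\in\Gamma}\tilde{\mathsf m}_\v(Q\cap\gamma Q\exp(-t\v))$ as a correlation integral on $\Gamma\backslash G/M$ and then appeal to the local mixing theorem of Chow and Sarkar (Theorem \ref{lem.mix}). Set $\chi:=\mathbbm{1}_Q$ and observe that $y\in \gamma Q\exp(-t\v)$ iff $\gamma^{-1}y\exp(t\v)\in Q$, so that the unfolding identity for the quotient measure $\mathsf m_\v$ gives
\[
\sum_{\gamma\in\Gamma}\tilde{\mathsf m}_\v\bigl(Q\cap\gamma Q\exp(-t\v)\bigr)=\int_{\Gamma\backslash G/M}F(x)\,F(x\exp(t\v))\,d\mathsf m_\v(x),
\]
where $F(x):=\sum_{\gamma\in\Gamma}\chi(\gamma^{-1}\tilde x)$ is the $\Gamma$-periodization of $\chi$, well-defined because $\Gamma$ acts properly discontinuously on $G/M$. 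Then $F$ is $\Gamma$-invariant, has support in the precompact set $\pi(Q)\subset\Gamma\backslash G/M$, and satisfies $\mathsf m_\v(F)=\tilde{\mathsf m}_\v(Q)>0$, the positivity coming from $\tilde{\mathsf m}_\v(\op{int}Q)>0$.

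Next I would upgrade Theorem \ref{lem.mix}, which is stated for $f_1,f_2\in C_c(\Gamma\backslash G/M)$, so as to allow the bounded, compactly supported Borel function $F$. Using the Radon property of $\tilde{\mathsf m}_\v$, for each $\epsilon>0$ one can choose $f^-_\epsilon,f^+_\epsilon\in C_c(G/M)$ with $0\le f^-_\epsilon\le\chi\le f^+_\epsilon$ and $\tilde{\mathsf m}_\v(f^+_\epsilon-f^-_\epsilon)<\epsilon$. Their $\Gamma$-periodizations $F^\pm_\epsilon$ belong to $C_c(\Gamma\backslash G/M)$ and bracket $F$, and since the integrand is nonnegative, Theorem \ref{lem.mix} applied to $F^\pm_\epsilon$ yields
\[
\kappa_\v\mathsf m_\v(F^-_\epsilon)^2\le\liminf_{t\to\infty}t^{(r-1)/2}\int F(x)F(x\exp(t\v))\,d\mathsf m_\v\le\limsup\le\kappa_\v\mathsf m_\v(F^+_\epsilon)^2,
\]
where $r:=\op{rank}(G)$. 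Letting $\epsilon\to 0$ gives $\mathsf m_\v(F^\pm_\epsilon)\to\tilde{\mathsf m}_\v(Q)$, so
\[
\sum_{\gamma\in\Gamma}\tilde{\mathsf m}_\v\bigl(Q\cap\gamma Q\exp(-t\v)\bigr)\ \sim\ \kappa_\v\,\tilde{\mathsf m}_\v(Q)^2\,t^{-(r-1)/2}\qquad(t\to+\infty).
\]

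Since $\tilde{\mathsf m}_\v(Q)>0$ and the integrand is eventually comparable to $t^{-(r-1)/2}$, integrability at infinity is equivalent to $(r-1)/2>1$, i.e.\ to $r\ge 4$. Therefore
\[
\int_0^\infty\sum_{\gamma\in\Gamma}\tilde{\mathsf m}_\v\bigl(Q\cap\gamma Q\exp(-t\v)\bigr)\,dt=\infty\quad\Longleftrightarrow\quad\op{rank}(G)\le 3,
\]
which is the claim. The only real obstacle is the sandwiching step, since Chow-Sarkar is stated only for continuous test functions; positivity of the integrand and the $C_c$-density in $L^1(\tilde{\mathsf m}_\v)$ make this routine, and the rest is a comparison of the resulting polynomial decay against the divergence of $\int^\infty t^{-(r-1)/2}\,dt$.
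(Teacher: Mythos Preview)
Your proposal is correct and follows essentially the same approach as the paper: sandwich $\mathbbm{1}_Q$ between continuous compactly supported functions $\tilde f_1\le\mathbbm{1}_Q\le\tilde f_2$ on $G/M$, pass to their $\Gamma$-periodizations in $C_c(\Gamma\backslash G/M)$, apply Chow--Sarkar (Theorem~\ref{lem.mix}), and compare with $\int^\infty t^{-(r-1)/2}\,dt$. The paper is content with the cruder estimate $\asymp t^{-(r-1)/2}$ (which only requires $\tilde{\mathsf m}_{\mathsf v}(\tilde f_1)>0$), whereas you push for the sharper $\sim \kappa_{\mathsf v}\,\tilde{\mathsf m}_{\mathsf v}(Q)^2\,t^{-(r-1)/2}$ via an $\epsilon$-sandwich; this extra precision is harmless but unnecessary for the stated conclusion.
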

\begin{proof} Choose $\tilde f_1, \tilde f_2\in C_c(G/M)$ so that $0\le \tilde f_1\le \mathbbm 1_{Q}\le \tilde f_2$ and $\tilde\m_{\mathsf v}(
\tilde{f_1}
)>0$. For each $i=1,2$, let $f_i\in C_c(\Ga\ba G/M)$ {be} defined by
$f_i([g])=\sum_{\ga\in \Ga} \tilde f_i(\ga g)$. 
By \eqref{minus}, we get 
\begin{align*} &\int_{\Ga\ba G/M}f_i
([g]
\exp(t{\mathsf v})
)f_i([g]) d{\m}_{\mathsf v} [g]\\ &=\int_{G/M}\sum_{\ga\in \Ga} \tilde f_i
(g
\exp(t{\mathsf v})
)\tilde f_i(g) d\tilde{\m}_{\mathsf v} (g) \asymp t^{(-\op{rank}(G) +1)/2} .\end{align*} The claim follows since $\int_{{1}}^\infty t^{(-\op{rank}(G)+1)/2} dt=\infty$ if and only if $\op{rank}(G)\!\le~\!\!3$.
\end{proof}

By Theorem \ref{dio}, the following theorem implies Theorem \ref{thm.Ano}:
\begin{thm}\label{cor.D}
Let $\mathsf{v}\in \inte\L_\Ga$ and $\mathsf{u}\in \inte\fa^+$.
The following are equivalent:
\begin{enumerate}
    \item $\op{rank}(G)\leq 3$ and $\br {\mathsf u}=\br {\mathsf v}$;
\item $\sum_{\ga\in\Ga_{{\mathsf u},R}}e^{-\psi_{\mathsf v}(\mu(\ga))}=\infty$ for some $R>0$.
\end{enumerate}
\end{thm}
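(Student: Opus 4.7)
The plan is to combine Proposition \ref{lem.EE}, which sandwiches the $\mathsf u$-directional Poincar\'e series between the correlation integral $\int_0^T \sum_{\ga\in\Ga} \tilde{\m}_{\mathsf v}(Q_r\cap \ga Q_r \exp(-t\mathsf v))\,dt$ and itself, with Corollary \ref{lem.DC}, which identifies exactly when that correlation integral diverges in terms of $\op{rank}(G)$. The bridge in one direction is the lower bound \eqref{eq1}; the bridge in the other direction is the elementary upper bound coming from Lemmas \ref{lem.K1} and \ref{lem.CC}.

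For $(2)\Rightarrow(1)$, I would first reduce to the case $\mathsf u=\mathsf v$. The divergence of the series, together with Lemma \ref{lem.neq}, forces $\psi_{\mathsf v}(\mathsf u)\le\psi_\Ga(\mathsf u)$; combined with the inequality $\psi_{\mathsf v}\ge\psi_\Ga$ from $\psi_{\mathsf v}\in D_\Ga^\star$, equality holds, and the characterization $\{\mathsf w\in\inte\fa^+ : \psi_{\mathsf v}(\mathsf w)=\psi_\Ga(\mathsf w)\}=\br_+\mathsf v$ recalled in the introduction gives $\mathsf u\in\br_+\mathsf v$. Since the set $\Ga_{\mathsf u,R}$ depends only on the ray $\br_+\mathsf u$, after rescaling we may take $\mathsf u=\mathsf v$. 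Noting $\psi_{\mathsf v}(\mathsf v)=\psi_\Ga(\mathsf v)>0$, we may apply Proposition \ref{lem.EE} with $\psi=\psi_{\mathsf v}$; the lower bound \eqref{eq1} then yields
$$\int_0^T\sum_{\ga\in\Ga}\tilde{\m}_{\mathsf v}(Q_{4r}\cap \ga Q_{4r}\exp(-t\mathsf v))\,dt \;\gg\; \sum_{\substack{\ga\in\Ga_{\mathsf v,r}\\ \psi_{\mathsf v}(\mu(\ga))<\psi_{\mathsf v}(\mathsf v)T}} e^{-\psi_{\mathsf v}(\mu(\ga))} \;\longrightarrow\; \infty$$
as $T\to\infty$. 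Corollary \ref{lem.DC} now forces $\op{rank}(G)\le 3$.

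For $(1)\Rightarrow(2)$, again I would rescale so that $\mathsf u=\mathsf v$ and pick $r$ large enough that $\tilde{\m}_{\mathsf v}(\inte Q_r)>0$ (possible since $\tilde{\m}_{\mathsf v}$ is nonzero and $Q_r$ exhausts $G/M$). By Corollary \ref{lem.DC}, $\int_0^\infty\sum_{\ga}\tilde{\m}_{\mathsf v}(Q_r\cap \ga Q_r \exp(-t\mathsf v))\,dt=\infty$. To convert this into divergence of the Poincar\'e series, I would derive the matching upper bound: Lemma \ref{lem.K1} gives $\tilde{\m}_{\mathsf v}(Q_r\cap\ga Q_r\exp(-t\mathsf v))\ll e^{-\psi_{\mathsf v}(\mu(\ga))}$, while Lemma \ref{lem.CC} restricts the nontrivially contributing $\ga$ to $\Ga_{\mathsf v,4dr}$ with $t$ in an interval $I_\ga\subset\br$ of length uniformly bounded in $r$ and $\mathsf v$. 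Swapping the order of summation and integration and discarding the finitely many $\ga$ with $I_\ga\cap[0,T]=\emptyset$ gives
$$\int_0^T\sum_{\ga\in\Ga}\tilde{\m}_{\mathsf v}(Q_r\cap\ga Q_r\exp(-t\mathsf v))\,dt \;\ll\; \sum_{\substack{\ga\in\Ga_{\mathsf v,4dr}\\ \psi_{\mathsf v}(\mu(\ga))\le \psi_{\mathsf v}(\mathsf v)T+O_r(1)}} e^{-\psi_{\mathsf v}(\mu(\ga))}.$$
Letting $T\to\infty$, the left side is infinite, forcing the right side (hence the Poincar\'e series with $R=4dr$) to diverge.

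All the real technical content is already packaged in Proposition \ref{lem.EE}, its supporting shadow estimates, and the Chow--Sarkar mixing input encoded in Corollary \ref{lem.DC}. The only mild obstacle is the bookkeeping in the final upper bound above, namely showing that $|I_\ga|$ is bounded uniformly: this follows by projecting the constraint $\|\mu(\ga)-t\mathsf v\|\le 4dr$ onto the direction $\mathsf v/\|\mathsf v\|$, which pins $t$ to an interval of length $\le 8dr/\|\mathsf v\|$.
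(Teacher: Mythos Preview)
Your argument is correct. The direction $(2)\Rightarrow(1)$ is essentially identical to the paper's: reduce to $\mathsf u=\mathsf v$ via Lemma \ref{lem.neq} and the uniqueness of the tangent direction, then feed the divergent Poincar\'e series into the lower bound \eqref{eq1} of Proposition \ref{lem.EE} and conclude $\op{rank}(G)\le 3$ from Corollary \ref{lem.DC}.

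For $(1)\Rightarrow(2)$ your route differs from the paper's and is more elementary. The paper invokes the double-integral estimate \eqref{eq2} together with the auxiliary set $Q_r^-:=\bigcap_{0\le s\le r/10}Q_r a_{-s}$ to obtain
\[
\int_0^T\sum_{\ga\in\Ga}\tilde{\m}_{\mathsf v}(Q_r^-\cap\ga Q_r^- a_{-t})\,dt\;\ll\;\Bigl(\sum_{\substack{\ga\in\Ga_{\mathsf v,4dr}\\ \psi_{\mathsf v}(\mu(\ga))\le \delta T}} e^{-\psi_{\mathsf v}(\mu(\ga))}\Bigr)^{2},
\]
and then appeals to Corollary \ref{lem.DC} to force the right-hand side to diverge. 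You bypass both the square and the $Q_r^-$ construction by combining Lemma \ref{lem.CC} (which confines contributing $\ga$ to $\Ga_{\mathsf v,4dr}$ and $t$ to an interval of length $\le 8dr/\|\mathsf v\|$) with Lemma \ref{lem.K1} (the pointwise bound $\tilde{\m}_{\mathsf v}(Q_r\cap\ga Q_r a_{-t})\ll e^{-\psi_{\mathsf v}(\mu(\ga))}$), arriving at a linear rather than quadratic comparison. This is cleaner and has the additional advantage that Lemmas \ref{lem.CC} and \ref{lem.K1} carry no divergence hypothesis, so there is no need to check that the proof of \eqref{eq2} goes through without the standing assumption of Proposition \ref{lem.EE}. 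One cosmetic point: Lemma \ref{lem.K1} is stated for $t>1$, so you should remark that the contribution from $t\in[0,1]$ is finite (only finitely many $\ga$ have $\|\mu(\ga)-t\mathsf v\|\le 4dr$ for some $t\in[0,1]$, and $\tilde{\m}_{\mathsf v}(Q_r)<\infty$).
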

\begin{proof} Suppose that $\op{rank}(G)\leq 3$ and $\mathsf{u}={\mathsf{v}}$. Let $a_t=\exp (t{\mathsf v})$.
Let $Q_r\subset G/M$ be as in \eqref{eq2} of Proposition \ref{lem.EE}. Then for $\delta=\psi_{\mathsf v}({\mathsf v})>0$, we have
\be\label{note}\int_0^T\int_0^T\sum_{\ga,\ga'\in\Ga} \tilde{\m}_{\mathsf v}(Q_r\cap \ga Q_ra_{-t}\cap\ga'  Q_ra_{-t-s})\,dt\,ds\ll\left(\sum_{\substack{\ga\in\Ga_{{\mathsf v},4dr}\\ \psi(\mu(\ga))\leq \delta T} }e^{-\psi_{\mathsf v}(\mu(\ga))}\right)^{{\hspace{-0.15cm}2}}.
\ee

Set  $Q_r^-:=\cap_{0\leq s\leq r/10}Q_ra_{-s}$.
We may assume that $\mathsf m_{\mathsf v}(\op{int}Q_r^-)>0$ by increasing $r$.
Note that
\begin{multline*}
\frac{r}{10} \int_0^T  \sum_{\ga \in\Ga}\tilde{\m}_{\mathsf v}(Q_r^-\cap \ga Q_r^-a_{-t})\,dt \leq \\
\int_0^T \int_{0\leq s \leq r/10} \sum_{\ga \in\Ga}
\tilde{\m}_{\mathsf v}(Q_r\cap \ga (Q_r \cap Q_r a_{-s})a_{-t})\,ds \,dt .
\end{multline*}
By \eqref{note}, we get
$$
\int_0^T  \sum_{\ga \in\Ga}\tilde{\mathsf m}_{\mathsf v}(Q_r^-\cap \ga Q_r^-a_{-t})\,dt \ll \left( \sum_{\substack{\ga\in\Ga_{{\mathsf v},4dr },\\
 \psi_{\mathsf v}(\mu(\ga))<\delta T}} e^{-\psi_{\mathsf v}(\mu(\ga))}\right)^{{\hspace{-0.15cm}2}}.
$$

Hence by Corollary \ref{lem.DC}, we get $\sum_{\ga\in\Ga_{{\mathsf v},R}}e^{-\psi_{\mathsf v}(\mu(\ga))}=\infty$.

Now suppose that $\sum_{\ga\in\Ga_{{\mathsf u},R}}e^{-\psi_{\mathsf v}(\mu(\ga))}=\infty$ for some $R>0$.
By Lemma \ref{lem.neq}, $\psi_{\mathsf v}({\mathsf u})=\psi_\Ga ({\mathsf u})$.
This implies $\br {\mathsf v}=\br {\mathsf u}$, as $\br {\mathsf v}$ is the unique line where $\psi_{\mathsf v}$ and $\psi_\Ga$ are equal to each other. This also implies $\mathsf{u}\in \inte\L_\Ga$. By Proposition \ref{lem.EE}, it follows that 
$ \int_0^\infty \sum_{\ga\in \Ga} \tilde {\m}_{\mathsf v}(Q_r\cap \ga Q_ra_{-t})\,dt=\infty$. Hence $\text{rank} (G)\le 3$ by Corollary \ref{lem.DC}.
\end{proof}

\begin{Rmk} It follows from Theorem \ref{cor.D} that when $\op{rank} G\le 3$ and $\mathsf{v}\in \inte\L_\Ga$,
the flow $(\Gamma\ba G/M, \m_{\mathsf v}, \exp (t{\mathsf v}))$ is rationally ergodic by Remark \ref{518}.
\end{Rmk}

\begin	{thebibliography}{10}

 \bibitem{Aa} J. Aaronson.
\newblock{\em Rational ergodicity and a metric invariant under Markov shifts.}
\newblock{Israel J. Math, 27 (1977), 93-123.}
 
\bibitem{Alb} P. Albuquerque.
\newblock{\em Patterson-Sullivan theory in higher rank symmetric spaces.}
\newblock{Geometric and Functional Analysis.}
\newblock{9 (1999), 1-28.} 
 
 \bibitem{AS} J. Aaronson and D. Sullivan.
\newblock{\em Rational ergodicity of geodesic flows.}
\newblock{Erg. The. Dynam. Sys. 4 (1984), 165-178.}

\bibitem{Ben} Y. Benoist.
\newblock{\em Proprietes asymptotiques des groupes lineaires.}
\newblock{Geom. Funct. Anal. (1997), 1-47.}

\bibitem{BPS} J. Bochi, R. Potrie and A. Sambarino.
\newblock{\em Anosov representations and dominated splittings.}
\newblock{J. Eur. Math. Soc. 21 (2019), 3343-3414.}

\bibitem{Bu} M. Burger.
\newblock{\em Intersection, the Manhattan curve, and Patterson-Sullivan theory in rank 2.}
\newblock{Int. Math. Res. Not. 1993, no. 7, 217-225.}

\bibitem{BM} M. Burger and S. Mozes.
\newblock{\em CAT (-1) spaces, Divergence groups and their commensurators.}
\newblock{Journal of the AMS, 9 (1996), 57-92 }

\bibitem{Car}
León Carvajales.
\newblock {Growth of Quadratic Forms Under Anosov Subgroups}.
\newblock {\em International Mathematics Research Notices}, 10 2021.
\newblock rnab181.

\bibitem{CS} M. Chow and P. Sarkar.
\newblock{\em Local mixing of one parameter diagonal flows on Anosov homogeneous spaces.}  
\newblock{Preprint, arXiv:2105.11377}

\bibitem{Eb} P. Eberlein.
\newblock{\em Geometry of nonpositively curved manifolds.}
\newblock{Chicago Lectures in Mathematics. University of Chicago Press, Chicago, IL, 1996.}

\bibitem{ELO} S. Edwards, M. Lee and H. Oh. 
\newblock{\em Anosov groups: local mixing, counting, and equidistribution}
\newblock{Preprint, arXiv:2003.14277,}
\newblock{To appear in Geometry \& Topology.}
 
\bibitem{GR} Y. Guivarc'h and A. Raugi.
\newblock{\em Actions of large semigroups and random walks on isometric extensions of boundaries.}
\newblock{Ann. Sci. \'Ecole Norm. Sup. 40 (2007), 209-249.}

\bibitem{GW} O. Guichard and A. Wienhard.
\newblock{\em Anosov representations: Domains of discontinuity and applications.}
\newblock{Invent. Math. 190, Issue 2 (2012), 357-438.}

\bibitem{Hopf} E. Hopf.
\newblock{\em Ergodic theory and the geodesic flow on surfaces of constant negative curvature}
\newblock{BAMS., 77 (1971), 863-877.}  

\bibitem{Ki} I. Kim.
\newblock Length spectrum in rank one symmetric space is not arithmetic.
\newblock {\em Proc. Amer. Math. Soc.}, 134(12):3691--3696, 2006.

\bibitem{Kr} U. Krengel.
\newblock{\em Ergodic theorems.}
\newblock{De Gruyter Studies in Mathematics, 6. Walter de Gruyter \& Co., Berlin, 1985. viii+357 pp.}

\bibitem{La} F. Labourie.
\newblock{\em Anosov flows, surface groups and curves in projective space.}
\newblock{Invent. Math. 165 (2006), no. 1, 51--114.}

\bibitem{LLLO} O. Landesberg, M. Lee, E. Lindenstrauss and H. Oh. 
\newblock{\em Horospherical invariant measures and a rank dichotomy for Anosov groups.}
\newblock{Preprint, arXiv:2106.02635,}
\newblock{To appear in JMD.}

\bibitem{LO} M. Lee and H. Oh.
\newblock{\em Invariant measures for horospherical actions and Anosov groups.}
\newblock{Preprint, arXiv:2008.05296,}
\newblock{To appear in IMRN.}

\bibitem{LO2} M. Lee and H. Oh.
\newblock{\em Ergodic decompositions of geometric measures on Anosov homogeneous spaces.}
\newblock{Preprint, arXiv:2010.11337,}
\newblock{To appear in Israel J. Math.}

\bibitem{PS}
R. Potrie and A. Sambarino.\newblock{\em Eigenvalues and entropy of a Hitchin representation.}
\newblock{Invent. Math. 209 (2017), no. 3, 885-925.}

\bibitem{Ni} P. Nicholls.
\newblock{\em The ergodic theory of discrete groups.}
\newblock{London Math. Soc. Lecture Notes,
vol. 143, Cambridge Univ. Press, Cambridge and New York, 1989.}

\bibitem{Quint1}
J.-F. Quint.
\newblock{\em Divergence exponentielle des sous-groupes discrets en rang sup\'erieur.}
\newblock{Comment. Math. Helv. 77 (2002), no. 3, 563-608.}

\bibitem{Quint2} 
J.-F. Quint.
\newblock{\em Mesures de Patterson-Sullivan en rang sup\'erieur.}
\newblock{Geom. Funct. Anal. 12 (2002), no. 4, 776-809.}

\bibitem{Q4} J.-F. Quint.
\newblock{\em L'indicateur de croissance des groupes de Schottky.}
\newblock{Ergodic Theory Dynamical Systems. 23 (2003), 249-272.}

\bibitem{Rob} T. Roblin.
\newblock{\em Ergodicit\'e et \'equidistribution en courbure n\'egative. M\'em. Soc. Math. Fr.}
\newblock{No. 95 (2003), vi+96.}

\bibitem{Samb} A. Sambarino.
\newblock{\em The orbital counting problem for hyperconvex representations.}
\newblock{Ann. Inst. Fourier (Grenoble) 65 (2015), no. 4, 1755-1797.}  
 
\bibitem{Samb2} A. Sambarino.
\newblock{\em Quantitative properties of convex representations.}
\newblock{Comment. Math. Helv. 89 (2014), no. 2, 443-488.}  

 \bibitem{Samb3} A. Sambarino.
 \newblock{\em Hyperconvex representations and exponential growth.}
 \newblock{ Ergodic Theory
Dynam. Systems 34 (2014), no. 3, 986 -1010.}
 
 \bibitem{Su} D. Sullivan.
 \newblock{\em The density at infinity of a discrete subgroup of hyperbolic motions.}
\newblock{Publ. IHES (1979), 171-202} 
 
 \bibitem{Ts} M. Tsuji. 
 \newblock{\em Potential theory in modern function theory. }
 \newblock{ Maruzen Co. Ltd: Tokyo, 1959}.

\bibitem{Win} D. Winter.
\newblock{\em Mixing of frame flow for rank one locally symmetric spaces and measure classification.}
\newblock{Israel J. Math. 210 (2015), no. 1, 467-507.}

\end{thebibliography}
\end{document}